\numberwithin{equation}{section}
\newtheorem{theorem}{Theorem}[section]
\newtheorem{lemma}[theorem]{Lemma}
\newtheorem{proposition}[theorem]{Proposition}
\newtheorem{corollary}[theorem]{Corollary}
\newtheorem{definition}[theorem]{Definition}
\theoremstyle{definition}
\newtheorem{remark}[theorem]{Remark}
\newtheorem{example}[theorem]{Example}
\newcommand{\R}{\mathbb{R}}
\newcommand{\N}{\mathbb{N}}
\newcommand{\Prime}{\mathbb{P}}
\newcommand{\Q}{\mathbb{Q}}
\newcommand{\pn}{\par\noindent}
\newcommand{\rd}{\mathbb{R}^d}
\newcommand{\vc}{{\Vec{c}}}
\newcommand{\vs}{{\Vec{s}}}
\newcommand{\vt}{{\Vec{t}}}
\newcommand{\vsig}{{\Vec{\sigma}}}
\begin{document}
\title[Zeta distributions generated by multidimensional Euler products]{Zeta distributions generated by multidimensional polynomial Euler products with complex coefficients}
\author[T.~Nakamura]{Takashi Nakamura}
\address[T.~Nakamura]{Department of Liberal Arts, Faculty of Science and Technology, Tokyo University of Science, 2641 Yamazaki, Noda-shi, Chiba-ken, 278-8510, Japan}
\email{nakamuratakashi@rs.tus.ac.jp}
\urladdr{https://sites.google.com/site/takashinakamurazeta/}
\subjclass[2010]{Primary 11M, 60E}
\keywords{characteristic function, infinite divisibility, polynomial Euler product, zeta distribution}
\maketitle

\begin{abstract}
In the present paper, we treat multidimensional polynomial Euler products with complex coefficients on ${\mathbb{R}}^d$. We give necessary and sufficient conditions for the multidimensional polynomial Euler products to generate infinitely divisible, quasi-infinitely divisible but non-infinitely divisible or not even characteristic functions by using Baker's theorem. Moreover, we give many examples of zeta distributions on ${\mathbb{R}}^d$ generated by the multidimensional polynomial Euler products with complex coefficients. Finally, we consider applications to analytic number theory.
\end{abstract}

\tableofcontents 

\section{Introduction}
\subsection{Infinitely divisible distributions}

In probability theory, infinitely divisible distributions are one of the most significant class of distributions. For example, Normal, degenerate, Poisson and compound Poisson distributions are infinitely divisible. The definition of infinitely divisible distributions is as follows.

\begin{definition}[Infinitely divisible distribution, see {\cite[Definition 7.1]{S99}}]
A probability measure $\mu$ on $\rd$ is infinitely divisible if,
for any positive integer $n$, there is a probability measure $\mu_n$ on 
$\rd$ such that $\mu=\mu_n^{n*}$, where $\mu_n^{n*}$ is the $n$-fold convolution of $\mu_n$.
\end{definition}

Denote by $I(\rd)$ the class of infinitely divisible distributions on $\rd$. Let $\widehat{\mu}(\vt):=\int_{\rd}e^{{\rm i}\langle \vt,x\rangle}\mu (dx),\, \vt\in\rd,$ be the characteristic function of a distribution $\mu$, 
where $\langle\cdot ,\cdot\rangle$ is the inner product. We write $a\wedge b=\min \{a,b\}$ as usual. 
\begin{proposition}[L\'evy--Khintchine representation, see {\cite[Theorem 8.1]{S99}})]\label{pro:LK1}
$(i)$ If $\mu\in I(\rd)$, then it holds that
\begin{equation}\label{INF}
\widehat{\mu}(\vt) = \exp \left[ -\frac{1}{2} \langle \vt,A\vt \rangle + {\rm i} \langle \gamma,\vt \rangle
+ \int_{\rd} \left( e^{{\rm i} \langle \vt,x \rangle} -1 -
\frac{{\rm i} \langle \vt,x \rangle}{1+|x|^2} \right) \nu(dx) \right], \quad \vt \in \rd,
\end{equation}
where $\gamma \in\rd$, $A$ is a symmetric nonnegative-definite $d \times d$ matrix, and $\nu$ is a measure on $\rd$ which satisfies
\begin{equation}\label{lev}
\nu(\{0\}) = 0 \quad \mbox{and} \quad \int_{\rd} (|x|^{2} \wedge 1) \nu(dx) < \infty.
\end{equation}

$(ii)$ The representation of $\widehat{\mu}$ in $(i)$ by $A, \nu,$ and $\gamma$ is unique.

$(iii)$ Conversely, if a symmetric $d\times d$ matrix $A$ is nonnegative-definite, a measure $\nu$ fulfills $\eqref{lev}$, and $\gamma \in\rd$, then there exists an infinitely divisible distribution $\mu$ whose characteristic function is given by $\eqref{INF}$.
\end{proposition}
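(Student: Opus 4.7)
The plan is to handle the three parts in the order $(iii)$, $(i)$, $(ii)$: existence first, since the explicit construction from a prescribed triplet in $(iii)$ supplies the building blocks that reappear when one decomposes an arbitrary $\mu\in I(\rd)$ in $(i)$, and uniqueness is then a corollary.

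For $(iii)$, I would build the candidate distribution from $(A,\nu,\gamma)$ piece by piece. Split the L\'evy measure as $\nu=\nu|_{\{|x|>1\}}+\nu|_{\{|x|\le 1\}}$. The outer part is finite by \eqref{lev}, so it yields a compound Poisson distribution with an explicit, manifestly infinitely divisible characteristic function. For the inner part, truncate to $\nu_\varepsilon:=\nu|_{\{\varepsilon<|x|\le 1\}}$ and form the compensated compound Poisson expression
\begin{equation*}
\widehat{\mu}_\varepsilon(\vt):=\exp\!\left[\int_{\rd}\!\left(e^{i\langle\vt,x\rangle}-1-\frac{i\langle\vt,x\rangle}{1+|x|^2}\right)\nu_\varepsilon(dx)\right].
\end{equation*}
Using the estimate $|e^{i\langle\vt,x\rangle}-1-i\langle\vt,x\rangle/(1+|x|^2)|=O(|\vt|^2|x|^2)$ near the origin together with $\int(|x|^2\wedge 1)\,\nu(dx)<\infty$, the integrand is dominated uniformly in $\varepsilon$, so $\widehat{\mu}_\varepsilon(\vt)$ converges pointwise as $\varepsilon\downarrow 0$. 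Multiplying by the Gaussian factor $\exp(-\tfrac12\langle\vt,A\vt\rangle)$ and the phase $e^{i\langle\gamma,\vt\rangle}$ produces a continuous, positive-definite function of the form \eqref{INF}; Bochner's theorem identifies it as a characteristic function, and it is infinitely divisible since each factor is.

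For $(i)$, the starting point is that $\widehat{\mu}$ has no zeros: from $\mu=\mu_n^{n*}$ one gets $|\widehat{\mu}|^{1/n}=|\widehat{\mu_n}|$, a characteristic function for every $n$, and letting $n\to\infty$ forces $\widehat{\mu}\ne 0$ everywhere. Hence $\psi:=\log\widehat{\mu}$ is well defined and continuous with $\psi(0)=0$, and $n(\widehat{\mu_n}(\vt)-1)\to\psi(\vt)$ uniformly on compacta. Rewriting
\begin{equation*}
n(\widehat{\mu_n}(\vt)-1)=\int_{\rd}\!\left(e^{i\langle\vt,x\rangle}-1-\frac{i\langle\vt,x\rangle}{1+|x|^2}\right)n\mu_n(dx)+i\langle\gamma_n,\vt\rangle
\end{equation*}
with $\gamma_n:=\int x(1+|x|^2)^{-1}\,n\mu_n(dx)$, I would show that the finite measures $|x|^2(1+|x|^2)^{-1}\,n\mu_n(dx)$ are tight on $\rd$, extract a weak limit that decomposes as the Gaussian contribution $|x|^2(1+|x|^2)^{-1}\nu(dx)$ on $\rd\setminus\{0\}$ plus an atom at the origin encoding $A$, and verify $\gamma_n\to\gamma$ for some $\gamma\in\rd$. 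Uniqueness in $(ii)$ then follows by reading $A$ off the asymptotic behaviour of $-\mathrm{Re}\,\psi(r\vt)/r^2$ as $r\to\infty$ along coordinate rays (the Fourier contribution of $\nu$ is bounded), recovering $\nu$ on $\rd\setminus\{0\}$ by Fourier inversion of the remaining integral, and finally pinning down $\gamma$.

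The main obstacle is the tightness and identification step in $(i)$: one must ensure simultaneously that no mass of $n\mu_n(dx)$ escapes to infinity and that the mass concentrating near the origin organises itself precisely into a Gaussian component $A$, with the leftover giving a Radon measure $\nu$ on $\rd\setminus\{0\}$ satisfying \eqref{lev}. Balancing these two limiting behaviours is exactly what the truncation weight $|x|^2/(1+|x|^2)$ is engineered to permit, but making the extraction rigorous requires a careful subsequential compactness argument together with the uniqueness of the limiting triplet to promote subsequential convergence to full convergence.
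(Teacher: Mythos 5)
This proposition is not proved in the paper at all: it is the classical L\'evy--Khintchine theorem, stated verbatim with a citation to \cite[Theorem 8.1]{S99}, and the paper uses it as a black box. So there is no in-paper argument to compare yours against; what you have written is an outline of the standard textbook proof (essentially the one in Sato's Chapter 8, going back to Gnedenko--Kolmogorov): build the distribution in $(iii)$ from the Gaussian factor, the compound Poisson part of $\nu$ on $\{|x|>1\}$, and the limit of compensated compound Poisson laws from truncations near the origin; for $(i)$ show $\widehat{\mu}$ never vanishes, pass to the limit of $n(\widehat{\mu_n}-1)$, and extract the triplet by a compactness argument; for $(ii)$ read $A$ off the quadratic growth of $\log\widehat{\mu}$ and recover $\nu$ by inversion. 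As a plan this is the right route, and for the purposes of this paper simply citing \cite{S99} is the appropriate move.

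If you do intend the sketch as a self-contained proof, three points need repair. First, $|\widehat{\mu_n}|$ is the modulus of a characteristic function but is not in general a characteristic function itself; the non-vanishing argument should use $|\widehat{\mu}|^{2/n}=|\widehat{\mu_n}|^{2}$, the characteristic function of $\mu_n*\widetilde{\mu}_n$, whose pointwise limit $\mathbf{1}_{\{\widehat{\mu}\neq 0\}}$ is continuous at $0$ and hence identically $1$ by the continuity theorem. Second, in the uniqueness step the contribution of $\nu$ to $\psi(r\vt)$ is not bounded in $r$ (the part of the integral over $|x|\le 1$ is only $o(r^{2})$, which is what the argument actually needs); the statement as written is too strong, though the conclusion $\lim_{r\to\infty}r^{-2}\psi(r\vt)=-\tfrac12\langle\vt,A\vt\rangle$ is correct. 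Third, and most importantly, the entire analytic core of $(i)$ --- tightness of $|x|^{2}(1+|x|^{2})^{-1}\,n\mu_n(dx)$, the decomposition of the weak limit into an atom at the origin giving $A$ and a measure on $\rd\setminus\{0\}$ giving $\nu$ with \eqref{lev}, convergence of $\gamma_n$, and the promotion of subsequential to full convergence via $(ii)$ --- is named but not carried out; as it stands the proposal identifies where the theorem is hard rather than proving it there.
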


The measure $\nu$ and $(A, \nu ,\gamma)$ in (\ref{INF}) are called the L\'evy measure and the L\'evy--Khintchine triplet of $\mu\in I(\rd)$, respectively. When the L\'evy measure $\nu$ satisfies an additional condition, one has a simpler form of $\eqref{INF}$.

\begin{proposition}[see, {\cite[(8.7)]{S99}}]\label{pro:LK2}
In Proposition \ref{pro:LK1}, if the L\'evy measure $\nu$ in $\eqref{INF}$ also satisfies $\int_{|x|<1}|x|\nu(dx)<\infty$, then we can rewrite the representation $\eqref{INF}$ by
\begin{equation}\label{INF2}
\widehat{\mu}(\vt) = \exp \left[ -\frac{1}{2} \langle \vt,A\vt \rangle + {\rm i} \langle \gamma_0,\vt \rangle
+ \int_{\rd} \left( e^{{\rm i} \langle \vt,x \rangle} -1 \right) \nu(dx) \right], \qquad \vt \in \rd,
\end{equation}
where $\gamma_0 = \gamma - \int_{\rd}x (1+|x|^2)^{-1} \nu(dx)$.
\end{proposition}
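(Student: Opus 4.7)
The plan is to start from the L\'evy--Khintchine representation \eqref{INF} and to show that, under the additional integrability condition $\int_{|x|<1}|x|\nu(dx)<\infty$, the compensating term ${\rm i}\langle \vt,x\rangle/(1+|x|^2)$ can be detached from the integrand and absorbed into the linear drift. The key observation is that the compensator in \eqref{INF} is only needed to cancel the linear part of $e^{{\rm i}\langle\vt,x\rangle}-1$ near the origin; once the jumps have absolutely summable first moment in a neighborhood of $0$, both pieces $e^{{\rm i}\langle\vt,x\rangle}-1$ and $\langle\vt,x\rangle/(1+|x|^2)$ become individually $\nu$-integrable, so the single integral in \eqref{INF} splits into two.

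Concretely, I would verify integrability by splitting the domain into $\{|x|<1\}$ and $\{|x|\geq 1\}$. On the outer region, $|e^{{\rm i}\langle\vt,x\rangle}-1|\leq 2$ and $|\langle\vt,x\rangle|/(1+|x|^2)\leq |\vt||x|/(1+|x|^2)\leq |\vt|/2$, while $\nu(\{|x|\geq 1\})<\infty$ by \eqref{lev}. On the inner region, a first-order Taylor estimate gives $|e^{{\rm i}\langle\vt,x\rangle}-1|\leq |\vt||x|$, and trivially $|\langle\vt,x\rangle|/(1+|x|^2)\leq |\vt||x|$; both are $\nu$-integrable by the added hypothesis. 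Hence linearity of integration yields
\begin{equation*}
\int_{\rd}\Bigl(e^{{\rm i}\langle\vt,x\rangle}-1-\frac{{\rm i}\langle\vt,x\rangle}{1+|x|^2}\Bigr)\nu(dx) = \int_{\rd}\bigl(e^{{\rm i}\langle\vt,x\rangle}-1\bigr)\nu(dx) - {\rm i}\Bigl\langle\vt,\int_{\rd}\frac{x}{1+|x|^2}\nu(dx)\Bigr\rangle.
\end{equation*}

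Substituting this identity back into \eqref{INF} and combining the two $\vt$-linear contributions into ${\rm i}\langle\gamma_0,\vt\rangle$ with $\gamma_0:=\gamma-\int_{\rd}x(1+|x|^2)^{-1}\nu(dx)$ then produces \eqref{INF2}. The only non-trivial content is the integrability verification sketched above; once that is in hand, the rest is algebraic bookkeeping, so I do not anticipate any significant obstacle.
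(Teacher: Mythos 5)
Your argument is correct: the paper itself gives no proof of this proposition, quoting it directly from Sato's book, and your splitting of the integrand into the two individually $\nu$-integrable pieces (using $|e^{{\rm i}\langle\vt,x\rangle}-1|\le|\vt||x|$ and $|\langle\vt,x\rangle|/(1+|x|^2)\le|\vt||x|$ near the origin, together with $\nu(\{|x|\ge 1\})<\infty$ and the bound $|x|/(1+|x|^2)\le 1/2$ away from it) is exactly the standard derivation of \eqref{INF2} from \eqref{INF}. Nothing is missing.
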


As an example, we consider the L\'evy--Khintchine representation of a compound Poisson distribution $\mu_{{\rm CPo}}$. For some $c>0$ and distribution $\rho$ on $\rd$ with $\rho (\{0\})=0$, we have
\begin{equation}\label{CPcf}
\widehat\mu_{{\rm CPo}}(\vt) = \exp \left( c\left( \widehat{\rho} (\vt) -1 \right) \right), \qquad \vt \in \rd. 
\end{equation}
Note that the Poisson distribution is a special case when $d=1$ and $\rho =\delta_1$.

\subsection{Riemann zeta function and Euler Products}
Zeta functions play one of the key roles in number theory. In 1859, Riemann established a relation between zeros of the Riemann zeta function and the distribution of prime numbers. The definition of the Riemann zeta function is as follows.

\begin{definition}[Riemann zeta function, see {\cite[Section 11]{Apo}}]
Let $s = \sigma + {\rm i}t$. Then for $\sigma >1$, the Riemann zeta function is a given by
\begin{equation}\label{eq:eupro}
\zeta (s) := \sum_{n=1}^{\infty} \frac{1}{n^s} = \prod_p \Bigl( 1 - \frac{1}{p^s} \Bigr)^{-1} ,
\end{equation}
where the letter $p$ is a prime number, and the product of $\prod_p$ is taken over all primes.
\end{definition}
The infinite series is called the Dirichlet series and the infinite product is called the Euler product. The Dirichlet series and the Euler product of $\zeta (s)$ converge absolutely when $\sigma >1$ and uniformly in each compact subset of the half-plane $\sigma >1$. 

The Dirichlet $L$-function $L(s,\chi)$ attached to a Dirichlet character $\chi$ mod $q$ is given by
\begin{equation}
L (s,\chi) := \sum_{n=1}^{\infty} \frac{\chi (n)}{n^s} = \prod_p \Bigl( 1 - \frac{\chi(p)}{p^s} \Bigr)^{-1} ,
\qquad \sigma > 1.
\end{equation}
The Riemann zeta function $\zeta (s)$ can be regarded as the Dirichlet $L$-function to the principal character $\chi_0 \mod 1$. 

Moreover, let $K$ be a general number field and $\mathbb{Z}_K$ be its ring of integers. As one of a generalization of $\zeta (s)$, it is natural to define the following function
$$
\zeta_K (s) := \sum_{{\mathfrak{a}}} \frac{1}{{\mathcal{N}} ({\mathfrak{a}})^s}
= \prod_{{\mathfrak{p}}} \Bigl( 1 - \frac{1}{{\mathcal{N}} ({\mathfrak{p}})^s} \Bigr)^{-1}, \qquad  \sigma > 1,
$$
where ${\mathfrak{a}}$ runs through all integral ideals of $\mathbb{Z}_K$ and ${\mathfrak{p}}$ through all prime ideals of $\mathbb{Z}_K$ and ${\mathcal{N}}$ denotes the absolute norm. 
The function $\zeta_K (s)$ is called the Dedekind zeta function. Obviously, we have $\zeta_{\Q} (s) = \zeta (s)$. Let $K=\Q (\sqrt{D})$ be a quadratic field of discriminant $D$.
Then we have $\zeta_K (s) = \zeta (s) L(s,\chi_D)$, where $\chi_D$ is the Legendre--Kronecker character (see \cite[Proposition 10.5.5]{Cohen}). Furthermore, we have the following (see, \cite[Theorem 10.5.22]{Cohen}). 
Let $\Q_m$ be the $m$-th cyclotomic field. Then one has $\zeta_{\Q_m} (s) = \prod_{\chi \,\, {\rm{mod}} \,\, m} L(s,\chi_f)$, where $\chi_f$ is the primitive character associated with $\chi$. In particular, we have $\zeta_{\Q_m} (s) = \prod_{\chi \,\, {\rm{mod}} \,\, m} L(s,\chi)$ when $m$ is a prime power. 

These well-known functions above can be regarded as the prototype of zeta functions which have the Euler products. 
Many authors have introduced and investigated classes of zeta or {\textit{L}}-functions to find the essential properties satisfied by functions with the Euler products. For example, in \cite{Steuding1}, there are two classes of Dirichlet series satisfying some quite natural analytic axioms with several arithmetic conditions added. 

\subsection{Zeta distributions and quasi-infinite divisibility}

In probability theory, there is a class of distribution on $\R$ generated by $\zeta (s)$. First it appears in \cite{Khi} and we can also find it in \cite{GK68}. Put
\begin{equation*}
f_{\sigma}(t):= \frac{\zeta (\sigma +{\rm i}t)}{\zeta (\sigma)}, \qquad t \in \R,
\end{equation*}
then $f_{\sigma}(t)$ is a characteristic function (see \cite[p.~76]{GK68}).

\begin{definition}[Riemann zeta distribution]
A distribution $\mu_{\sigma}$ on $\R$ is a Riemann zeta distribution with parameter $\sigma >1$ if it has $f_{\sigma}(t)$ as its characteristic function.
\end{definition}

The Riemann zeta distribution is infinitely divisible and its L\'evy measures can be given of the form as in the following.

\begin{proposition}[see {\cite[p.~76]{GK68}}]\label{pro:RD}
Let $\mu_{\sigma}$ be a Riemann zeta distribution on $\R$ with characteristic function $f_{\sigma}(t)$.
Then, $\mu_{\sigma}$ is compound Poisson on $\R$ and
\begin{align*}
f_{\sigma}(t) = \exp \left[ \int_0^{\infty} \!\! \left(e^{-{\rm i}tx}-1\right) N_{\sigma}(dx) \right], 
\qquad N_{\sigma}(dx):= \sum_{p} \sum_{r=1}^{\infty} \frac{p^{-r\sigma}}{r} \delta_{r\log p}(dx),
\end{align*}
where $\delta_x$ is the delta measure at $x$.
\end{proposition}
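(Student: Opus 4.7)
The plan is to start from the Euler product for $\zeta(s)$ at $\sigma>1$ and directly compute the logarithm of $f_\sigma(t)$. Since the Euler product converges absolutely in the half-plane $\sigma>1$, we may write
\[
 f_\sigma(t) = \frac{\zeta(\sigma+{\rm i}t)}{\zeta(\sigma)} = \prod_p \frac{1-p^{-\sigma}}{1-p^{-\sigma-{\rm i}t}},
\]
and, choosing the principal branch of the logarithm on each factor (which is legitimate because $|p^{-\sigma-{\rm i}t}|=p^{-\sigma}<1$ for all primes $p$), expand via $-\log(1-z)=\sum_{r\ge 1} z^r/r$ to obtain
\[
 \log f_\sigma(t) = \sum_p \sum_{r=1}^{\infty} \frac{p^{-r\sigma}}{r}\bigl(e^{-{\rm i}t\,r\log p}-1\bigr).
\]
Recognising the right-hand side as an integral against $N_\sigma$ gives the claimed identity.

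The second step is to justify all the manipulations and to check the integrability of $N_\sigma$. The total mass is
\[
 N_\sigma(\R) = \sum_p \sum_{r=1}^{\infty} \frac{p^{-r\sigma}}{r} = \sum_p \bigl(-\log(1-p^{-\sigma})\bigr) = \log\zeta(\sigma)<\infty
\]
for $\sigma>1$, where the equalities follow from the Taylor expansion of $-\log(1-z)$ and from taking $\log$ of the Euler product. This finiteness makes $N_\sigma$ a finite positive measure on $(0,\infty)$ with $N_\sigma(\{0\})=0$ (since $r\log p>0$ for $r\ge 1$ and $p\ge 2$), hence condition \eqref{lev} is satisfied; the same finiteness also legitimises Fubini and the interchange of $\sum_p$, $\sum_r$ and the integral $\int_0^\infty$ in passing from the double sum to the integral representation.

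Finally, to identify $\mu_\sigma$ as compound Poisson, set $c:=\log\zeta(\sigma)$ and $\rho:=c^{-1}N_\sigma$; then $\rho$ is a probability measure on $(0,\infty)$ with $\rho(\{0\})=0$, $\widehat\rho(-t) = c^{-1}\int_0^\infty e^{-{\rm i}tx}N_\sigma(dx)$, and the derived formula becomes $f_\sigma(t) = \exp\bigl(c(\widehat\rho(-t)-1)\bigr)$, which is exactly the compound Poisson form \eqref{CPcf} (up to the sign convention in $t$, coming from $n^{-{\rm i}t}=e^{-{\rm i}t\log n}$ in the Dirichlet series).

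The only genuinely delicate point is justifying the termwise logarithm of the Euler product and the subsequent reordering of the double series; once one observes that the double series $\sum_p\sum_r p^{-r\sigma}/r$ converges to $\log\zeta(\sigma)$ absolutely, everything else is routine. In particular, absolute convergence allows us to pick the principal branch of $\log$ on each Euler factor and to add the logarithms without ambiguity, which is the step where one might otherwise worry about winding issues.
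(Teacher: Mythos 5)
Your proof is correct and follows essentially the same route the paper itself takes: the paper quotes this proposition from Gnedenko--Kolmogorov without proof, but its own argument for the generalization (Lemma \ref{lem:lm1} and Theorem \ref{th:id1}) is exactly your computation --- expand $\log f_{\sigma}(t)$ via the Dirichlet series of the logarithm of the Euler product, recognise the double sum as $\int (e^{-{\rm i}tx}-1)N_{\sigma}(dx)$, and bound the total mass to confirm $N_{\sigma}$ is a finite measure. Your additional observation that $N_{\sigma}(\R)=\log\zeta(\sigma)$, so that one may take $c=\log\zeta(\sigma)$ and $\rho=c^{-1}N_{\sigma}$ in \eqref{CPcf}, correctly completes the identification as compound Poisson.
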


\begin{remark}
It should be mentioned that the Riemann zeta distribution is defined only in the region of half-plane $\sigma >1$ since normalized functions $\zeta(\sigma+it) /\zeta(\sigma)$ can not be characteristic functions for any $1/2 \le \sigma \le 1$ (see \cite[Remark 1.12]{ANPE}).
\end{remark}

Lin and Hu \cite{Lin} investigated the following function
$$
{\mathcal{D}}_\sigma (t) :=\frac{D(\sigma+{\rm{i}}t)}{D(\sigma)}, \qquad 
D(s) := \prod_{p} \Bigl( 1- \frac{c(p)}{p^s} \Bigr)^{-1},
$$
where $c(p)$ are completely multiplicative non-negative coefficients. They proved that the function ${\mathcal{D}}_\sigma (t)$ is infinitely divisible when the product of $D(\sigma+{\rm{i}}t)$ converges absolutely. 

Afterwards, Aoyama and Nakamura \cite{ANPE} defined $m$-tuple compound Poisson zeta distributions on ${\mathbb{R}}$. Furthermore, they consider Multidimensional $\eta$-tuple $\varphi$-rank compound Poisson zeta distributions on ${\mathbb{R}}^d$. By applying the Kronecker's approximation theorem and Baker's theorem, they gave necessary and sufficient conditions for some polynomial Euler products to generate characteristic functions.

On the other hand, Aoyama and Nakamura \cite{AN12q} considered some two-variable finite Euler products and showed how they behave in view whether their corresponding normalized functions to be infinitely or quasi-infinitely divisible characteristic functions on $\R^2$. The quasi-infinitely divisibility is defined as follows.

\begin{definition}[Quasi-infinitely divisible distribution]
A distribution $\mu$ on $\rd$ is called {\it quasi-infinitely divisible} if it has a form of \eqref{INF} and the corresponding measure $\nu$ is a signed measure on $\rd$ with total variation measure $|\nu|$ which satisfy $\nu(\{0\}) =0$ and $\int_{\rd} (|x|^{2} \wedge 1) |\nu|(dx) < \infty$.
\end{definition}

Note that the triplet $(A,\nu ,\gamma)$ in this case is also unique if each component exists and that distributions on $\rd$ are quasi-infinitely divisible but not infinitely divisible if and only if the negative part of $\nu$ in the Jordan decomposition is not zero. The measure $\nu$ is called {\it quasi}-L\'evy measure and appeared in some books and papers, for example,  Gnedenko and Kolmogorov \cite[p.~81]{GK68}, Linnik and Ostrovskii \cite[Chap.~6, \S 7]{LO}, and others (see also Lindner and Sato \cite[Introduction]{LSm} or Sato \cite[Section 2.4]{Sato12}.)

\subsection{Aims of this paper}
In the present paper, we define zeta distributions on ${\mathbb{R}}^d$ generated by the following multidimensional polynomial Euler product
\begin{equation}\label{eq:aim14}
Z_E(\vs) = \prod_p \prod_{l=1}^\varphi \prod_{k=1}^\eta 
\biggl( 1 - \frac{\alpha_{lk}(p)}{p^{\langle \vc_l,\vs\rangle}} \biggr)^{-1} ,
\end{equation}
where $d,\varphi, \eta \in\N$, $\vc_l\in\mathbb{R}^d$, $\vs\in\mathbb{C}^d$, $\min_{1\le l\le \varphi} \Re \langle \vc_l,\vs\rangle >1$, $\alpha_{lk}(p) \in {\mathbb{C}}$ and $|\alpha_{lk}(p)|\le 1$ for $1 \le k \le \eta$ and $1 \le l \le \varphi$. The main aims of this paper are as follows.
\begin{enumerate}
\item Treat polynomial Euler products with complex coefficients.
\item Simplify the proofs of Theorems in \cite{ANPE}.
\item Consider applications to analytic number theory
\end{enumerate}

In association with the main aim (1), Aoyama and Nakamura \cite{ANPE} considered multidimensional polynomial Euler products with coefficients $\alpha_{lk}(p) \in \{ -1,0,1\}$. To adjust general number theory (see Section 1.2), we consider polynomial Euler products with complex coefficients $|\alpha_{lk}(p)|\le 1$ in the present paper. This change makes it possible to treat not only the case $\alpha_{lk}(p) \not \in \{ -1,0,1\}$ but also the case $a \vc_l = b \vc_k$, where $a$ and $b$ are some positive integers (see Section 4.4). 

The key of the proof of main theorem in \cite{ANPE} is Kronecker's approximation theorem. By using this theorem, the authors judged whether $|Z_E(\vs)/ Z_E(\Re(\vs))|\le 1$ or not. The method is interesting but not easy to understand. In this paper, we determine whether $Z_E(\vs)$ can generate a characteristic or not without Kronecker's approximation theorem (see Theorems \ref{th:d1} and \ref{th:dm1}). These theorems give simple proofs of many results in \cite{ANPE}.  

As applications to analytic number theory, we consider the value distribution of zeta functions in the region of absolute convergence. For example, we show that the Riemann zeta function $\zeta (s)$ satisfies inequalities (\ref{eq:ap2}) and (\ref{eq:ap3}) but the Dirichlet $L$-function $L(s)$ defined by (\ref{eq:defLep}) or (\ref{eq:defLds}) does not. Thus we can say that the value distribution of zeta and $L$-functions above are not same. It should be noted that $\zeta (s)$ can generates a characteristic function but $L(s)$ can not by Theorem \ref{th:cla1}. 

The paper is structured as follows. In Section 2, we define multidimensional polynomial Euler products with complex coefficients and give some important examples. Next we consider zeta distributions generated by one dimensional polynomial Euler products in Section 3. More precisely, we give necessary and sufficient conditions for polynomial Euler products with complex coefficients to generate infinitely divisible, quasi-infinitely divisible but non-infinitely divisible or not even characteristic functions when $\varphi=1$ in (\ref{eq:aim14}). Section 4 is the multidimensional case of Section 3. Namely, we consider the case $\varphi>1$ which is the main topic of this paper. We classify multidimensional polynomial Euler products into infinitely divisible, quasi-infinitely divisible but non-infinitely divisible, and not even characteristic functions by using Baker's theorem which is very famous in transcendental number theory. It should be noted that many examples of zeta distributions on ${\mathbb{R}}^d$ generated the polynomial Euler products are given in Sections 3 and 4. Finally, we consider applications to analytic number theory in Section 5. 

\section{Multidimensional polynomial Euler Products}

\subsection{Definition and properties}

Denote by $\Prime$ the set of all prime numbers.

\begin{definition}[Multidimensional polynomial Euler product, $Z_E(\vs)$]\label{def:EP}
Let $d,m\in\N$ and $\vs\in\mathbb{C}^d$.
For $\alpha_j(p) \in {\mathbb{C}}$, $|\alpha_j(p)|\le 1$ and non-zero vectors $\vc_j \in {\mathbb{R}}^d$, $1\le j \le m$, we define the following multidimensional polynomial Euler product given by
\begin{equation}
Z_E (\vs) = \prod_p \prod_{j=1}^m \biggl( 1 - \frac{\alpha_j(p)}{p^{\langle \vc_j,\vs\rangle}} \biggr)^{-1}.
\label{eq:def1}
\end{equation}
\end{definition}

Note that $\alpha_j(p)$ in \cite[Definition 2.1]{ANPE} is real number for any $1\le j\le m$ and $p\in\Prime$. In the present paper, we also consider the case $\alpha_l(p) \in {\mathbb{C}}$ in order to adjust general number theory. The polynomial Euler product with $d=1$ is commonly-used in number theory (see for example \cite{Steuding1}). This product converges absolutely when $\min_{1\le j\le m}\Re \langle \vc_j,\vs\rangle >1$ by the following lemma which coincides with \cite[Theorem 2.3]{ANPE} when $-1 \le \alpha_l(p) \le 1$. 

\begin{lemma}\label{lem:EPc} 
The product \eqref{eq:def1} converges absolutely and has no zeros in the region $\min_{1\le j\le m}\Re \langle \vc_j,\vs\rangle >1$. 
\end{lemma}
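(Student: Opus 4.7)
The plan is to reduce everything to a comparison with the prime zeta function, since each local factor has modulus at most $1/p^{\sigma_0}$ where $\sigma_0:=\min_{1\le j\le m}\Re\langle\vc_j,\vs\rangle>1$.

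First I would fix $\vs$ in the stated region and observe that the hypothesis $|\alpha_j(p)|\le 1$ together with $\Re\langle\vc_j,\vs\rangle\ge\sigma_0>1$ gives the uniform bound
\[
\left|\frac{\alpha_j(p)}{p^{\langle\vc_j,\vs\rangle}}\right|\;\le\;\frac{1}{p^{\sigma_0}}\;\le\;\frac{1}{2^{\sigma_0}}<1
\]
for every prime $p$ and every $1\le j\le m$. In particular, no local factor $(1-\alpha_j(p)p^{-\langle\vc_j,\vs\rangle})$ can vanish, so each finite partial product is a nonzero complex number.

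Next I would take logarithms (using the principal branch) and expand via $-\log(1-z)=\sum_{r=1}^{\infty}z^r/r$, which converges for $|z|<1$. Formally,
\[
\log Z_E(\vs)\;=\;\sum_p\sum_{j=1}^{m}\sum_{r=1}^{\infty}\frac{\alpha_j(p)^r}{r\,p^{r\langle\vc_j,\vs\rangle}}.
\]
To justify this rearrangement, I would bound the triple sum in absolute value by
\[
\sum_p\sum_{j=1}^{m}\sum_{r=1}^{\infty}\frac{1}{r\,p^{r\sigma_0}}\;=\;m\sum_p\Bigl(-\log\bigl(1-p^{-\sigma_0}\bigr)\Bigr)\;=\;m\log\zeta(\sigma_0),
\]
which is finite since $\sigma_0>1$. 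Absolute convergence of the triple series justifies both the interchange of summations and the identification with $\log Z_E(\vs)$, so the infinite product converges absolutely. Moreover, since $|\log Z_E(\vs)|\le m\log\zeta(\sigma_0)<\infty$, the value $Z_E(\vs)=\exp(\log Z_E(\vs))$ is a nonzero complex number, proving the no-zeros claim.

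There is no real obstacle here; the only mild subtlety is making sure the Taylor expansion of $-\log(1-z)$ is applicable (which follows from the clean bound $1/p^{\sigma_0}\le 1/2^{\sigma_0}<1$) and that absolute convergence of the triple series legitimises treating $\sum_p\sum_j\sum_r$ as a single sum. Once these are in place, the conclusion $Z_E(\vs)\ne 0$ is immediate from the finiteness of its logarithm, and absolute convergence of the product follows from the standard criterion that $\sum|{-\log(1-\alpha_j(p)p^{-\langle\vc_j,\vs\rangle})}|<\infty$.
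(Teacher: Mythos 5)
Your proof is correct, but it takes a different route from the paper's. The paper fixes the factors in the form $1+u_n$ with $u_n=-\alpha_j(p)p^{-\langle\vc_j,\vs\rangle}$, bounds $\sum_p|u_n|\le\sum_p p^{-v}\le\int_1^\infty x^{-v}\,dx<\infty$ with $v:=\min_j\Re\langle\vc_j,\vs\rangle$, and then invokes Rudin's theorem on infinite products (quoted as Proposition 2.3): uniform convergence of $\sum|u_n|$ gives convergence of the product, and the product vanishes only where some factor does, which is excluded since $|u_n|<1$. You instead pass to logarithms, expand $-\log(1-z)=\sum_r z^r/r$, and dominate the resulting triple sum by $m\log\zeta(\sigma_0)$; nonvanishing then falls out of $Z_E(\vs)=\exp(\log Z_E(\vs))$ with a finite exponent. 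Both arguments ultimately rest on the same comparison with $\sum_p p^{-\sigma_0}$, but yours has the side benefit of rigorously justifying the Dirichlet series expansion of $\log Z_E(\vs)$ in \eqref{eq:9.19}, which the paper simply asserts by reference to Steuding and then uses repeatedly (e.g., in Lemmas \ref{lem:lm1} and \ref{lem:lmm1}); the paper's route, by working uniformly on compact subsets, delivers uniform convergence (hence holomorphy of the product) essentially for free, which your pointwise argument does not address, though the lemma as stated does not require it. One small point of hygiene: when you write $Z_E(\vs)=\exp(\log Z_E(\vs))$ you are implicitly identifying the limit of the partial products with the exponential of the limit of the partial sums of logarithms; this is standard (exponentiate the finite sums and pass to the limit by continuity of $\exp$), but it is worth saying explicitly since the "principal branch" of $\log$ of an individual factor and the series $\sum_r z^r/r$ must be matched term by term, which your bound $|z|\le 2^{-\sigma_0}<1$ does guarantee.
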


To prove this lemma, we quote the following proposition. 

\begin{proposition}[see {\cite[Theorem 15.4]{Rudin}}]
Suppose $\{ u_n \}$ is a sequence of bounded complex functions on a set $S$, such that $\sum |u_n(s)|$ converges uniformly on $S$. Then the product $f(s)=\prod_{n=1}^\infty (1+u_n(s))$ converges uniformly on $S$, and $f(s_0)=0$ at some $s_0 \in S$ if and only if $u_n(s_0)=-1$ for some $n \in {\mathbb{N}}$. 
\label{pro:co5.9}
\end{proposition}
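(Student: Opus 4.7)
The plan is to reduce everything to Proposition \ref{pro:co5.9} applied to the double-indexed family $(p,j)\in\Prime\times\{1,\dots,m\}$. Since an infinite product $\prod a_n$ with $a_n\neq 0$ converges to a nonzero limit if and only if $\prod a_n^{-1}$ does, I would first replace $Z_E(\vs)$ by its reciprocal
\[
\widetilde Z_E(\vs):=\prod_p\prod_{j=1}^m\bigl(1-\alpha_j(p)\,p^{-\langle\vc_j,\vs\rangle}\bigr),
\]
and set $u_{p,j}(\vs):=-\alpha_j(p)\,p^{-\langle\vc_j,\vs\rangle}$, so that each factor has the shape $1+u_{p,j}(\vs)$ demanded by the cited proposition.

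Next I would fix an arbitrary compact subset $S$ of the open region $\{\vs:\min_{1\le j\le m}\Re\langle\vc_j,\vs\rangle>1\}$. By continuity and compactness there exists $\sigma_0>1$ with $\min_j\Re\langle\vc_j,\vs\rangle\ge\sigma_0$ for every $\vs\in S$. Combining this with the hypothesis $|\alpha_j(p)|\le 1$ yields the uniform pointwise bound $|u_{p,j}(\vs)|\le p^{-\sigma_0}$. Summing over the finite index $j$ and over primes,
\[
\sum_{p}\sum_{j=1}^m\bigl|u_{p,j}(\vs)\bigr|\;\le\; m\sum_p p^{-\sigma_0}\;\le\; m\,\zeta(\sigma_0)\;<\;\infty,
\]
uniformly on $S$. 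Enumerating the countable pairs $(p,j)$ in any order (absolute convergence of the bounding sum is order-independent), Proposition \ref{pro:co5.9} then yields uniform absolute convergence of $\widetilde Z_E(\vs)$ on $S$, and hence on the entire open region since $S$ was arbitrary.

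To rule out zeros I would use the second assertion of Proposition \ref{pro:co5.9}: $\widetilde Z_E(\vs)$ vanishes at some $\vs$ only if $u_{p,j}(\vs)=-1$ for some pair $(p,j)$, i.e.\ $\alpha_j(p)\,p^{-\langle\vc_j,\vs\rangle}=1$. But in the region we have $|\alpha_j(p)\,p^{-\langle\vc_j,\vs\rangle}|\le p^{-\sigma_0}\le 2^{-\sigma_0}<1$, a contradiction. Taking reciprocals transfers both the absolute convergence and the non-vanishing from $\widetilde Z_E(\vs)$ back to $Z_E(\vs)$.

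There is no substantive obstacle here; the only bookkeeping point worth flagging is the double index $(p,j)$, which collapses because $m$ is finite so that the total variation is controlled by $m\,\zeta(\sigma_0)$. The genuine content of the lemma is really just the choice of majorant, together with the input from Proposition \ref{pro:co5.9}.
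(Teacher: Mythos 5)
Your proposal does not prove the statement in question. The statement to be established is Rudin's general theorem on infinite products: that uniform convergence of $\sum_n |u_n(s)|$ forces uniform convergence of $\prod_n (1+u_n(s))$, and that the limit vanishes at $s_0$ exactly when some factor $1+u_n(s_0)$ vanishes. Your argument instead \emph{invokes} this proposition as a black box and applies it to the family $u_{p,j}(\vs) = -\alpha_j(p)p^{-\langle \vc_j,\vs\rangle}$ — which is a (perfectly reasonable) proof of Lemma \ref{lem:EPc}, essentially the one the paper itself gives, but it is circular as a proof of Proposition \ref{pro:co5.9} itself. None of the actual content of the proposition is addressed: nowhere do you explain why a uniformly convergent majorant series controls the partial products, nor why the infinite product cannot acquire a zero "in the limit" even when every individual factor is nonzero (this second point is the genuinely nontrivial part, and is exactly why one cannot simply say ``each factor is nonzero, hence the product is nonzero'').

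A correct proof along standard lines would set $P_N(s) = \prod_{n=1}^N (1+u_n(s))$ and use $|P_N(s)| \le \exp\bigl(\sum_{n=1}^N |u_n(s)|\bigr) \le e^{C}$, where $C$ is a uniform bound on $\sum_n |u_n(s)|$; then
\[
\bigl|P_{N+k}(s) - P_N(s)\bigr| \;\le\; |P_N(s)|\,\Bigl(\exp\Bigl(\textstyle\sum_{n>N}|u_n(s)|\Bigr)-1\Bigr) \;\le\; e^{C}\Bigl(\exp\Bigl(\textstyle\sum_{n>N}|u_n(s)|\Bigr)-1\Bigr),
\]
which tends to $0$ uniformly in $s$, giving uniform convergence. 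For the zero statement one chooses $N$ so large that $\sum_{n>N}|u_n(s)| \le 1/2$ for all $s$, and uses an estimate such as $|1+u| \ge e^{-2|u|}$ for $|u|\le 1/2$ to see that the tail product $\prod_{n>N}(1+u_n(s))$ is bounded below in modulus by a positive constant; hence $f(s_0)=0$ forces $P_N(s_0)=0$, i.e.\ $u_n(s_0)=-1$ for some $n\le N$. The paper does not reprove this proposition either — it cites Rudin — so if your intent was to supply the missing proof, the above is what is required; if your intent was to prove Lemma \ref{lem:EPc}, you have answered a different question.
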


\begin{proof}[Proof of Lemma \ref{lem:EPc}]
Put $v := \min_{1\le j\le m}\Re \langle \vc_j,\vs\rangle$.
Then, by the assumption $v>1$ and $|\alpha_j(p)| \le 1$ for any $p\in\Prime$ and $1\le j\le m$, we have
$$
\sum_{p} \bigl| \alpha_j(p) p^{-\langle \vc_j,\vs\rangle} \bigr| \le \sum_{p} p^{-v} \le \sum_{n\ge 2} n^{-v}
\le \int_1^\infty x^{-v} dx < \infty .
$$
Thus $\sum_{p} \alpha_j(p) p^{-\langle \vc_j,\vs\rangle}$ converges absolutely and uniformly on any compact subset of the region $\min_{1\le j\le m}\Re \langle \vc_j,\vs\rangle$$ >1$. By Proposition \ref{pro:co5.9}, the product \eqref{eq:def1} converges absolutely in the region $\min_{1\le j \le m}\Re \langle \vc_j,\vs\rangle >1$. We also have that $|1-\alpha_j(p) p^{-\langle \vc_j,\vs\rangle} |^{-1} >0$ for any  $p\in\Prime$ and $1\le j\le m$ when $\min_{1\le j\le m}\Re \langle \vc_j,\vs\rangle >1$, so that $\eqref{eq:def1}$ does not have zeros.
\end{proof}

Here and in the sequel, we define $\log Z_E (\vs)$ by the following Dirichlet series expansion
\begin{equation}\label{eq:9.19}
\log Z_E (\vs) := \sum_p \sum_{r=1}^\infty \sum_{j=1}^m \frac{1}{r} \alpha_l(p)^r p^{-r \langle \vc_j,\vs\rangle}
\end{equation}
in the region of absolute convergence $\min_{1\le j \le m}\Re \langle \vc_j,\vs\rangle >1$ (see e.g.~\cite[(9.19)]{Steuding1}). This formula will be used in some proofs of this paper. 

As mentioned in Section 1.2, the Riemann zeta function and Dirichlet $L$-functions have both the Euler products and the Dirichlet series expressions. Similarly, the polynomial Euler product with the condition all $\vc_l$ are the same also can be written by the Dirichlet series $\sum_{n=1}^\infty a(n)n^{-s}$. We quote some elementary properties for the coefficients $a(n)$ in the Dirichlet series expansion. 
\begin{proposition}[see {\cite[Lemma 2.2]{Steuding1}}]\label{lm:A}
Suppose that a function ${\mathcal{L}}(s)$ is given by 
$$
{\mathcal{L}}(s)= \sum_{n=1}^\infty \frac{a(n)}{n^s} = 
\prod_p \prod_{j=1}^m \Bigl( 1-\frac{\alpha_j(p)}{p^s} \Bigr)^{-1} , \qquad \sigma>1.
$$
Then $a(n)$ is multiplicative and
$$
a(n) = \prod_{p|n} \sum_{\substack{0 \le \theta_1, \ldots , \theta_m \\ \theta_1+ \cdots +\theta_m = \nu(n;p)}}
\prod_{j=1}^m \alpha_j(p)^{\theta_j},
$$
where $\nu(n;p)$ is the exponent of the prime $p$ in the prime factorization of the integer $n$. 
Moreover, if $|\alpha_j(p)| \le 1$ for $1 \le j \le m$ and all primes $p$, then $|a(n)| = O(n^{\varepsilon})$ for any $\varepsilon >0$, and vice versa.
\label{pro:st2.2}
\end{proposition}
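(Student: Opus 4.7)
The plan is to work locally at each prime and then assemble by multiplicativity. For the explicit formula for $a(n)$, fix a prime $p$ and expand each geometric factor as $(1 - \alpha_j(p) p^{-s})^{-1} = \sum_{\theta_j \ge 0} \alpha_j(p)^{\theta_j} p^{-\theta_j s}$; multiplying the $m$ such expansions and grouping terms according to the total exponent $\nu = \theta_1 + \cdots + \theta_m$ of $p^{-s}$ gives
$$\prod_{j=1}^m \bigl(1 - \alpha_j(p) p^{-s}\bigr)^{-1} = \sum_{\nu \ge 0} \Bigl( \sum_{\theta_1 + \cdots + \theta_m = \nu} \prod_{j=1}^m \alpha_j(p)^{\theta_j} \Bigr) p^{-\nu s}.$$
Absolute convergence of both the Dirichlet series and the Euler product in $\sigma > 1$, together with the uniqueness of Dirichlet coefficients (which follows, e.g., from the logarithmic expansion \eqref{eq:9.19} applied to both sides), lets me identify the inner sum with $a(p^\nu)$; collecting across distinct primes then delivers the stated product formula for $a(n)$, and the formula itself makes multiplicativity transparent since each prime contributes a factor depending only on $\nu(n;p)$.

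For the bound $|a(n)| = O(n^\varepsilon)$ under the hypothesis $|\alpha_j(p)| \le 1$, each summand defining $a(p^\nu)$ has modulus at most $1$, so
$$|a(p^\nu)| \le \#\bigl\{(\theta_1,\ldots,\theta_m) \in \mathbb{Z}_{\ge 0}^m : \theta_1 + \cdots + \theta_m = \nu\bigr\} = \binom{\nu + m - 1}{m - 1},$$
a polynomial of degree $m-1$ in $\nu$. The multiplicative majorant $n \mapsto \prod_{p \mid n} \binom{\nu(n;p) + m - 1}{m - 1}$ is dominated by the generalized divisor function $d_m(n)$, and I would deduce the stated estimate from the classical bound $d_m(n) = O_\varepsilon(n^\varepsilon)$: for each $\varepsilon > 0$, the local ratio $\binom{\nu + m - 1}{m - 1}/p^{\varepsilon \nu}$ is at most $1$ for all primes $p$ exceeding some $p_0(\varepsilon)$ and all $\nu \ge 0$, while the finitely many smaller primes contribute only a uniformly bounded factor.

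For the converse implication, suppose $|a(n)| = O(n^\varepsilon)$ for every $\varepsilon > 0$. The local generating function $\sum_{\nu \ge 0} a(p^\nu) x^\nu = \prod_{j=1}^m (1 - \alpha_j(p) x)^{-1}$ is a rational function in $x$ whose poles lie at the points $x = 1/\alpha_j(p)$, so its radius of convergence equals $1/\max_j |\alpha_j(p)|$. Cauchy--Hadamard yields $\limsup_{\nu \to \infty} |a(p^\nu)|^{1/\nu} = \max_j |\alpha_j(p)|$, and the hypothesis applied at $n = p^\nu$ bounds the same $\limsup$ by $p^\varepsilon$ for every $\varepsilon > 0$, hence by $1$. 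Letting $p$ and $j$ vary gives $|\alpha_j(p)| \le 1$ throughout.

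The only non-formal step is the divisor-type $O(n^\varepsilon)$ estimate, which is where one must carefully separate the contributions of large and small primes; the remaining ingredients reduce to routine manipulation of absolutely convergent local expansions plus a single radius-of-convergence computation, so I do not anticipate a serious obstacle.
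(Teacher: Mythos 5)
Your proof is correct. Note that the paper itself gives no proof of this proposition: it is imported verbatim from Steuding's book (Lemma 2.2 there), so there is nothing in the paper to compare against. Your argument is the standard one and essentially the one in the cited source: expand each local factor $(1-\alpha_j(p)p^{-s})^{-1}$ as a geometric series, take the Cauchy product to read off $a(p^{\nu})$ as the sum over compositions $\theta_1+\cdots+\theta_m=\nu$, assemble over primes by unique factorization to get multiplicativity, majorize $|a(p^{\nu})|$ by $\binom{\nu+m-1}{m-1}=d_m(p^{\nu})$ and invoke the classical bound $d_m(n)=O_{\varepsilon}(n^{\varepsilon})$ (your large-prime/small-prime split is the right way to prove that bound), and for the converse apply Cauchy--Hadamard to the local generating function, whose radius of convergence is exactly $1/\max_j|\alpha_j(p)|$ since the numerator is $1$ and no pole can cancel. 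The only blemish is the parenthetical claim that uniqueness of Dirichlet coefficients ``follows from the logarithmic expansion'': it does not; it is the standard uniqueness theorem for Dirichlet series (Apostol, Theorem 11.3, which this paper cites elsewhere), and you should just invoke it directly. This is inessential and does not affect the validity of the argument.
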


\begin{remark}\label{rem:an}
It should be noted that one has $a(1)=1$ and $a(p)= \sum_{j=1}^m \alpha_j(p)$ from Proposition \ref{pro:st2.2}. This facts play an important role in Sections 3 and 4. 
\end{remark}

By using the proposition above, we obtain the following lemma.
\begin{lemma}\label{lem:mdst}
Let $\varphi,\eta \in {\mathbb{N}}$. Suppose $\min_{1\le l\le \varphi} \Re \langle \vc_l,\vs\rangle >1$. Then we have
$$
\prod_p \prod_{l=1}^\varphi \prod_{k=1}^\eta 
\biggl( 1 - \frac{\alpha_{lk}(p)}{p^{\langle \vc_l,\vs\rangle}} \biggr)^{-1} =
\prod_{l=1}^\varphi \sum_{n_l=1}^{\infty} \frac{a_l(n_l)}{n_l^{\langle \vc_l,\vs\rangle}} =
\sum_{n_1, \ldots ,n_\varphi=1}^\infty
\frac{a_1(n_1)}{n_1^{\langle \vc_1,\vs\rangle}} \cdots \frac{a_\varphi(n_\varphi)}{n_\varphi^{\langle \vc_\varphi,\vs\rangle}},
$$
where $a_l(n_l)$ is  multiplicative and written by
\begin{equation}
a_l(n_l) = \prod_{p|n_l} 
\sum_{\substack{0 \le \theta_1, \ldots , \theta_\eta \\ \theta_1+ \cdots +\theta_\eta = \nu(n_l;p)}}
\prod_{k=1}^\eta \alpha_{lk}(p)^{\theta_k}.
\label{eq:an}
\end{equation}
Furthermore, if $|\alpha_{lk}(p)| \le 1$ for $1 \le l \le \varphi$, $1 \le k \le \eta$ and all primes $p$, then $|a_l(n_l)| = O(n_l^{\varepsilon})$ for any $1 \le l \le \varphi$ and $\varepsilon >0$, and vice versa. In addition, the series $\prod_{l=1}^\varphi \sum_{n_l=1}^{\infty} a_l(n_l) n_l^{-\langle \vc_l,\vs\rangle}$ converges absolutely when $\min_{1\le l\le \varphi} \Re \langle \vc_l,\vs\rangle >1$. 
\end{lemma}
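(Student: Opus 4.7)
The plan is to reduce the multidimensional identity to $\varphi$ separate one-dimensional applications of Proposition \ref{pro:st2.2}, and then to multiply the resulting Dirichlet series using the absolute convergence already supplied by Lemma \ref{lem:EPc}.

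First I would fix an index $l$ with $1 \le l \le \varphi$ and set $s_l := \langle \vc_l, \vs \rangle$, which satisfies $\Re s_l > 1$ by hypothesis. Applying Proposition \ref{pro:st2.2} to
$$
{\mathcal L}_l(s_l) := \prod_p \prod_{k=1}^\eta \biggl( 1 - \frac{\alpha_{lk}(p)}{p^{s_l}} \biggr)^{-1}
$$
immediately produces the Dirichlet expansion ${\mathcal L}_l(s_l) = \sum_{n_l=1}^\infty a_l(n_l) n_l^{-s_l}$ with $a_l$ multiplicative and given by the constrained-sum formula \eqref{eq:an}. The bound $|a_l(n_l)| = O(n_l^\varepsilon)$ together with its converse is inherited directly from the last assertion of Proposition \ref{pro:st2.2}, since the hypothesis $|\alpha_{lk}(p)| \le 1$ concerns only the coefficients attached to this fixed $l$. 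Since $\Re s_l > 1$, this Dirichlet series converges absolutely.

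Next I would paste these $\varphi$ expansions together. By Lemma \ref{lem:EPc} the full triple product over $(p,l,k)$ converges absolutely in the stated region, so the outer product over $p$ may be interchanged with the middle product over $l$, yielding
$$
\prod_p \prod_{l=1}^\varphi \prod_{k=1}^\eta \biggl( 1 - \frac{\alpha_{lk}(p)}{p^{\langle \vc_l,\vs\rangle}} \biggr)^{-1} = \prod_{l=1}^\varphi {\mathcal L}_l(\langle \vc_l,\vs\rangle) = \prod_{l=1}^\varphi \sum_{n_l=1}^\infty \frac{a_l(n_l)}{n_l^{\langle \vc_l,\vs\rangle}}.
$$
Expanding this finite product of absolutely convergent series into the iterated sum $\sum_{n_1,\ldots,n_\varphi \ge 1}$ is then a routine Fubini argument, and the absolute convergence of the resulting multiple sum follows from
$$
\sum_{n_1,\ldots,n_\varphi=1}^\infty \prod_{l=1}^\varphi \frac{|a_l(n_l)|}{n_l^{\Re \langle \vc_l,\vs\rangle}} = \prod_{l=1}^\varphi \sum_{n_l=1}^\infty \frac{|a_l(n_l)|}{n_l^{\Re \langle \vc_l,\vs\rangle}},
$$
each factor on the right being finite because $|a_l(n_l)| = O(n_l^\varepsilon)$ and $\Re \langle \vc_l,\vs\rangle > 1$.

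The main obstacle is purely bookkeeping: Proposition \ref{pro:st2.2} performs all of the genuine work (both the identification of $a_l(n_l)$ and the size estimate with its converse), while Lemma \ref{lem:EPc} supplies the absolute convergence needed to justify every product interchange and series rearrangement. The one mild point to verify is that, after regrouping the triple product by $l$, the remaining product over $p$ for fixed $l$ truly coincides with ${\mathcal L}_l(\langle \vc_l,\vs\rangle)$; this is immediate once one notes that each local factor depends only on the triple $(p,l,k)$ and not on values at other $(p',l')$.
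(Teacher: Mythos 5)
Your proposal is correct and follows essentially the same route as the paper: both reduce the claim to $\varphi$ separate applications of Proposition \ref{pro:st2.2} with $s$ specialized to $\langle \vc_l,\vs\rangle$, and both deduce the absolute convergence of the multiple series from the bound $|a_l(n_l)|=O(n_l^{\varepsilon})$. The paper is merely terser, leaving the product interchange and Fubini step implicit, whereas you spell them out.
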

\begin{proof}
We only have to show the absolute convergence of the series $\prod_{l=1}^\varphi \sum_{n_l=1}^{\infty} a_l(n_l) n_l^{-\langle \vc_l,\vs\rangle}$ since the other statements are proved immediately from Proposition \ref{pro:st2.2}. By using Proposition \ref{pro:st2.2}, one has 
$$
\prod_p \prod_{k=1}^\eta \biggl( 1 - \frac{\alpha_{lk}(p)}{p^{\langle \vc_l,\vs\rangle}} \biggr)^{-1} =
\prod_p \prod_{k=1}^\eta 
\biggl( 1+ \sum_{j=1}^\infty \frac{\alpha_{lk}(p)^j}{p^{j\langle \vc_l,\vs\rangle}} \biggr) =
\sum_{n_l=1}^{\infty} \frac{a_l(n_l)}{n_l^{\langle \vc_l,\vs\rangle}},
$$
where $a_l(n_l)$ is defined by (\ref{eq:an}). The Dirichlet series above convergent absolutely when $\min_{1\le l\le \varphi} \Re \langle \vc_l,\vs\rangle >1$ since we have $|a_l(n_l)| = O(n_l^{\varepsilon})$ by Proposition \ref{pro:st2.2} and
$$
\sum_{n_l=1}^{\infty} \left| \frac{a_l(n_l)}{n_l^{\langle \vc_l,\vs\rangle}} \right| \le
\sum_{n_l=1}^{\infty} \frac{C_\varepsilon}{n_l^{\langle \vc_l,\vsig \rangle-\varepsilon}} \le C_\varepsilon +
C_\varepsilon \int_1^\infty x^{\varepsilon-\langle \vc_l,\vsig \rangle} dx
$$
for some $C_\varepsilon >0$. Therefore we obtain this lemma.
\end{proof}

\subsection{Examples of multidimensional polynomial Euler products}
Some simple examples of $Z_E(\vs)$ for $d=1$ are the following.

\begin{example}\label{ex:EP}
$(i)$ When $d=m=1$ and $\alpha(p)= p^{-\alpha}$, where $\alpha >0$, then
$$
Z_E (s_1) = \prod_p \frac{1}{1 -p^{-s_1-\alpha}} = \zeta (s_1+\alpha).
$$

\pn
$(ii)$ When $d=m=1$ and $\alpha(p)= -1$, then one has
$$
Z_E (s_1) = \prod_p \frac{1}{1 +p^{-s_1}} = \prod_p \frac{1 -p^{-s_1}}{1 -p^{-2s_1}}  = 
\frac{\zeta (2s_1)}{\zeta (s_1)}.
$$

\pn
$(iii)$ Let $\omega := e^{{\rm{i}}\pi/3}$. When $d=m=1$, $\vc=3$, $\alpha(p)=1$, or $d=1$, $\vc=1$, $m=3$ and $\alpha_1(p) =1$, $\alpha_2(p) =\omega$, $\alpha_3(p) =\omega^2$, then
$$
Z_E (s_1) = \prod_p \frac{1}{1 -p^{-3s_1}}  = \zeta (3s_1) = 
\prod_p \frac{1}{(1 -p^{-s_1})(1 - \omega p^{-s_1})(1 - \omega^2 p^{-s_1})}.
$$
\end{example}

Similarly, we have following examples for $d=2$ as a simple multidimensional case.

\begin{example}
$(iv)$ When $d=m=2$, $\vc_1=(1,0)$, $\vc_2=(1,2)$, $\alpha_1(p)=1$ and $\alpha_2(p)=\chi(p)$, then we have
$$
Z_E (\vs)=\prod_p \frac{1}{1 -p^{-s_1}} \frac{1}{1 -\chi(p)p^{-(s_1+2s_2)}}=\zeta(s_1)L(s_1+2s_2,\chi).
$$

\pn
$(v)$ When $d=2$, $m=3$, $\vc_1=(1,0)$, $\vc_2=(0,1)$, $\vc_3=(1,1)$, $\alpha_1(p)=1$, $\alpha_2(p)=\chi(p)$ and  $\alpha_3(p)= p^{-\alpha}$, where $\alpha >0$, then we have
$$
Z_E (\vs)=\prod_p \frac{1}{1 -p^{-s_1}} \frac{1}{1 -\chi(p)p^{-s_2}}\frac{1}{1 -p^{-s_1-s_2-\alpha}}=
\zeta(s_1)L(s_2,\chi)\zeta (s_1+s_2+\alpha).
$$
\end{example}

Let $d_k (n)$, $k=2,3,4,\ldots$, denote the number of ways of expressing $n$ as a product of $k$ factors, expression with the same factors in a different order being counted as different. 
\begin{example}
$(vi)$ It is known that (see, for example \cite[(1.2.2)]{Tit})
$$
\prod_p \bigl( 1 - p^{-s} \bigr)^{-k} = \zeta^k (s) = 
\sum_{m_1=1}^\infty \frac{1}{m_1^s} \cdots \sum_{m_k=1}^\infty \frac{1}{m_k^s} =
\sum_{n=1}^\infty \frac{1}{n^s} \sum_{m_1 \cdots m_k=n} 1 = \sum_{n=1}^\infty \frac{d_k(n)}{n^s}.
$$
Moreover, we have $d_k(n) = O(n^{\varepsilon})$ by Proposition \ref{pro:st2.2}. 
\end{example}

Now we define the following Dirichlet $L$-function $L(s)$ by
\begin{equation}
L(s) := \prod_{p \,:\, {\rm{odd}}} \Bigl( 1-(-1)^{\frac{p-1}{2}}p^{-s} \Bigr)^{-1}, \qquad \sigma >1.
\label{eq:defLep}
\end{equation}
It is well-known that $L(s)$ is also expressed by 
\begin{equation}
L(s) = \sum_{n=1}^\infty \frac{\chi_{-4}(n)}{n^s}, \qquad \chi_{-4}(n) :=
\begin{cases}
1 & n \equiv 1 \mod 4,\\
-1 &  n \equiv 3 \mod 4,\\
0  & n \equiv 0,2 \mod 4.
\end{cases}
\label{eq:defLds}
\end{equation}
Furthermore, let $\mathbb{Q}({\rm i})$ be a quadratic field of discriminant $-4$.
The Dedekind zeta function of $\mathbb{Q}({\rm i})$ is a function of a complex variables $s=\sigma +{\rm i}t$, for $\sigma >1$ given by
$$
\zeta_{{\mathbb{Q}}({\rm i})} (s) := \zeta (s) L(s).
$$
\begin{example}\label{exa:dqi1}
$(vii)$ It is known that (see, for example \cite[p.~221]{Cohen})
$$
\zeta_{{\mathbb{Q}}({\rm i})} (s) = \frac{1}{4} \sum_{(m,n) \in {\mathbb{Z}}^2 \setminus (0,0)} \frac{1}{(m^2+n^2)^s} = 
\sum_{n=1}^\infty \frac{a^\#(n)}{n^s},
$$
where $a^\#(n)$ is nonnegative definite coefficient written as
\begin{equation}\label{eq:ash}
a^\#(n) := \frac{1}{4} \# \{ (m_1,m_2) \in {\mathbb{Z}}^2 : m_1^2+m_2^2 =n\} = \sum_{m \mid n} \chi_{-4}(m),
\end{equation}
where the sum $\sum_{m \mid n}$ takes all positive divisors of $n$. Moreover, it holds that $a^\#(n) = O(n^{\varepsilon})$ by Proposition \ref{pro:st2.2}. 
\end{example}

\section{Zeta distributions generated by polynomial Euler products}
Here and in the sequel, we put 
$$
\vs:=\vsig +{\rm i}\vt, \qquad \Vec{\sigma}, \vt\in\rd.
$$
In this section, we only consider the case when $\Re \langle \vc,\vs\rangle >1$ where $\vc := \vc_1 = \cdots = \vc_m \in {\mathbb{R}}^d$ in (\ref{eq:def1}), namely, we only treat the following type of polynomial Euler products
\begin{equation}
Z_E(\vs) = \prod_p \prod_{k=1}^\eta \biggl( 1 - \frac{\alpha_{k}(p)}{p^{\langle \vc,\vs\rangle}} \biggr)^{-1} ,
\label{eq:defpe3}
\end{equation}
where $\alpha_k(p) \in {\mathbb{C}}$, $|\alpha_k(p)|\le 1$, $1 \le k \le \eta$. When $-1 \le \alpha_k(p) \le 1$, this function coincides with the polynomial Euler product treated in \cite[Section 3]{ANPE}.

In the view of Proposition \ref{pro:st2.2}, $Z_E(\vs)$ is also written by as follows.
\begin{equation}
Z_E(\vs) = \sum_{n=1}^{\infty} \frac{a(n)}{n^{\langle \vc,\vs\rangle}} , \qquad
a(n) = \prod_{p|n} 
\sum_{\substack{0 \le \theta_1, \ldots , \theta_\eta \\ \theta_1+ \cdots +\theta_\eta = \nu(n;p)}}
\prod_{k=1}^\eta \alpha_k(p)^{\theta_k}.
\label{eq:defsr3}
\end{equation}
We have to note that the series above converges absolutely when $\Re \langle \vc,\vs\rangle >1$ by the fact that $a(n) = O(n^{\varepsilon})$ proved in Lemma \ref{lm:A}.

For $\vsig$ satisfying $\langle \vc,\vsig \rangle >1$, we define a normalized function
\begin{equation}
f_{\vsig}\left(\vt\,\right):=\frac{Z_E\left(\vsig +{\rm i}\vt\,\right)}{Z_E(\vsig)}.
\label{eq:nor1}
\end{equation}
Thus the zeta distribution defined by the characteristic function above is essentially one dimensional.

\subsection{Infinitely divisible or not}
We have the following.
\begin{theorem}\label{th:id1}
Let $\alpha_k(p) \in {\mathbb{C}}$, $|\alpha_k(p)|\le 1$ for any $p\in\Prime$ and $1 \le k \le \eta$ in (\ref{eq:defpe3}). Then $f_{\vsig}$ is an infinitely divisible characteristic function if and only if $\sum_{k=1}^\eta \alpha_k(p)^r \ge 0$ for all $r \in {\mathbb{N}}$ and $p\in\Prime$.
Moreover, when $\sum_{k=1}^\eta \alpha_k(p)^r \ge 0$ for all $r \in {\mathbb{N}}$ and $p\in\Prime$, $f_{\vsig}$ is a compound Poisson characteristic function with its finite L\'evy measure $N_{\vsig}$ on $\rd$ given by 
\begin{equation}\label{eq:lm1}
N_{\vsig} (dx) = \sum_p \sum_{r=1}^{\infty} \sum_{k=1}^\eta \frac{1}{r}
\alpha_k(p)^r p^{-r\langle\vc,\vsig\rangle} \delta_{\log p^r \vc} (dx).
\end{equation}
\end{theorem}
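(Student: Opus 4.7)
The plan is to work directly with the logarithmic Dirichlet series expansion \eqref{eq:9.19} of $Z_E$. For $\varphi=1$, subtracting gives
$$
\log f_{\vsig}(\vt) = \log Z_E(\vsig+{\rm i}\vt) - \log Z_E(\vsig) = \sum_{p}\sum_{r=1}^{\infty} \frac{A_{p,r}}{r}\,p^{-r\langle\vc,\vsig\rangle}\bigl(p^{-{\rm i}r\langle\vc,\vt\rangle}-1\bigr),
$$
where I set $A_{p,r}:=\sum_{k=1}^{\eta}\alpha_k(p)^r$ and all rearrangements of sums are legitimate because $\langle\vc,\vsig\rangle>1$. Since $p^{-{\rm i}r\langle\vc,\vt\rangle}=e^{-{\rm i}\langle \vt,\,r(\log p)\vc\rangle}$, this equals $\int_{\rd}(e^{-{\rm i}\langle\vt,x\rangle}-1)\,N_{\vsig}(dx)$ with $N_{\vsig}$ the a priori signed measure of \eqref{eq:lm1}. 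Before proceeding I will verify that $N_{\vsig}$ is a finite signed measure on $\rd\setminus\{0\}$: the bound $|A_{p,r}|\le\eta$ together with $\sum_p\sum_r p^{-r\langle\vc,\vsig\rangle}/r=\log\zeta(\langle\vc,\vsig\rangle)<\infty$ yields $|N_{\vsig}|(\rd)\le \eta\log\zeta(\langle\vc,\vsig\rangle)<\infty$, and no atom sits at the origin since $\vc\ne 0$ and $r\ge 1$.

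The ``if'' direction is then immediate. Assuming $A_{p,r}\ge 0$ for every $p$ and $r$, $N_{\vsig}$ is a finite \emph{positive} measure, and the displayed identity is exactly the compound--Poisson characteristic function \eqref{CPcf} with $c=N_{\vsig}(\rd)$ and $\rho=c^{-1}N_{\vsig}$. Hence $f_{\vsig}$ is a compound Poisson, and therefore infinitely divisible, characteristic function whose finite L\'evy measure is $N_{\vsig}$; this simultaneously proves the second assertion of the theorem.

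For the converse, suppose $f_{\vsig}$ is infinitely divisible. The identity above already exhibits $f_{\vsig}$ in the quasi-infinitely divisible form \eqref{INF2} (Gaussian part zero, drift absorbed into $\gamma_0$), with quasi-L\'evy measure $N_{\vsig}$. By the uniqueness of the quasi-L\'evy--Khintchine triplet noted in the excerpt (the analogue of Proposition \ref{pro:LK1}(ii) for signed L\'evy measures), this $N_{\vsig}$ must coincide with the genuine L\'evy measure of $f_{\vsig}$, which is positive by hypothesis; hence $N_{\vsig}$ itself is positive. To pass from positivity of the measure to nonnegativity of each $A_{p,r}$, I will use that the atoms $r(\log p)\vc$ of $N_{\vsig}$ are pairwise distinct as $(p,r)$ runs over $\Prime\times\N$: if $r_1\log p_1\cdot\vc = r_2\log p_2\cdot\vc$, then $\vc\ne 0$ gives $p_1^{r_1}=p_2^{r_2}$, and unique factorization forces $(p_1,r_1)=(p_2,r_2)$. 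Consequently the mass of $N_{\vsig}$ at the atom $r(\log p)\vc$ is exactly $A_{p,r}\,p^{-r\langle\vc,\vsig\rangle}/r$, and positivity of $N_{\vsig}$ is equivalent to $A_{p,r}\ge 0$ for every $p$ and $r$.

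The main subtlety, such as it is, lies in the uniqueness step in the converse: one must be precise that the quasi-L\'evy triplet uniqueness allows the identification of the signed measure built explicitly from the Euler product with the positive L\'evy measure guaranteed by infinite divisibility. The distinct-atoms observation coming from unique prime factorization then upgrades that measure-level identity to the required pointwise nonnegativity of the sums $\sum_{k=1}^{\eta}\alpha_k(p)^r$.
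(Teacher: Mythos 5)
Your proof is correct and follows essentially the same route as the paper: the same integral representation of $\log f_{\vsig}$ against the a priori complex measure $N_{\vsig}$ of \eqref{eq:lm1} (the paper's Lemma \ref{lem:lm1}), the same compound--Poisson argument for the ``if'' direction, and the same appeal to unique prime factorization to separate the atoms $r(\log p)\vc$ in the converse. The only difference is cosmetic: you spell out the uniqueness-of-the-(quasi-)L\'evy--Khintchine-triplet step that the paper's proof of Theorem \ref{th:id1} leaves implicit.
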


To prove the theorem above, we show the following lemma. 
\begin{lemma}\label{lem:lm1}
Let $\alpha_k(p) \in {\mathbb{C}}$, $|\alpha_k(p)|\le 1$ for any $p\in\Prime$ and $1 \le k \le \eta$. Then $N_{\vsig}$ is a complex measure on $\rd$ with total variation measure $|N_{\vsig}|$ satisfying $N_{\vsig}(\{0\}) =0$ and $\int_{\rd} (|x| \wedge 1) |N_{\vsig}|(dx) < \infty$.
\end{lemma}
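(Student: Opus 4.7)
The plan is to verify the three assertions of the lemma directly from the defining formula \eqref{eq:lm1}. First I would note that every atom $\delta_{\log p^r \vc}$ is placed at the point $r\log p \cdot \vc$, which lies in $\rd\setminus\{0\}$ because $\vc$ is a non-zero vector (Definition \ref{def:EP}) and $\log p>0$ for every prime $p$. Consequently $N_{\vsig}(\{0\})=0$ is automatic, regardless of the complex weights involved.

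The substantive step is to show that $N_{\vsig}$ actually defines a (finite) complex measure by bounding its total variation; this simultaneously dominates the integral in question. Because the map $(p,r)\mapsto p^r$ is injective (unique factorisation), the support points $r\log p\cdot\vc$, indexed by pairs $(p,r)$, are pairwise distinct, so the total variation of the atomic measure equals the sum of the moduli of the atom weights. Using $|\alpha_k(p)|\le 1$ together with the triangle inequality, I estimate
\begin{equation*}
|N_{\vsig}|(\rd) \;=\; \sum_p \sum_{r=1}^{\infty} \frac{p^{-r\langle\vc,\vsig\rangle}}{r} \left| \sum_{k=1}^{\eta} \alpha_k(p)^r \right| \;\le\; \eta \sum_p \sum_{r=1}^{\infty} \frac{p^{-r\langle\vc,\vsig\rangle}}{r} \;=\; \eta \log \zeta(\langle\vc,\vsig\rangle),
\end{equation*}
where the last equality uses $\sum_{r\ge 1} p^{-r\sigma_0}/r = -\log(1-p^{-\sigma_0})$ together with the Euler product for $\zeta$. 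Since $\langle\vc,\vsig\rangle>1$ by hypothesis, $\zeta(\langle\vc,\vsig\rangle)$ is finite, and hence so is $|N_{\vsig}|(\rd)$; in essence this is the bound on the right-hand side of \eqref{eq:9.19} obtained by replacing each $\alpha_k(p)$ with $1$.

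Finally the stated integral estimate is immediate from $|x|\wedge 1 \le 1$, since
\begin{equation*}
\int_{\rd}(|x|\wedge 1)\,|N_{\vsig}|(dx) \;\le\; |N_{\vsig}|(\rd) \;<\; \infty.
\end{equation*}
I do not expect any serious obstacle here; the only point that deserves care is the reduction of the total variation to $\sum_{p,r}|\tfrac{1}{r}p^{-r\langle\vc,\vsig\rangle}\sum_k\alpha_k(p)^r|$, which relies on the distinct-support observation above and on the fact that the bound $\eta\log\zeta(\langle\vc,\vsig\rangle)$ is genuinely finite in the region of absolute convergence.
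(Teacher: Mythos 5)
Your proof is correct and follows essentially the same route as the paper: bound the total variation of the atomic measure by replacing each $|\alpha_k(p)|$ with $1$, reduce to $\eta\sum_p\sum_r r^{-1}p^{-r\langle\vc,\vsig\rangle}<\infty$ in the region $\langle\vc,\vsig\rangle>1$, and deduce the integral condition from $|x|\wedge 1\le 1$. The only differences are cosmetic: you identify the bound exactly as $\eta\log\zeta(\langle\vc,\vsig\rangle)$ where the paper settles for the cruder $2\eta\zeta(v)$, and you make explicit the (trivial) observation that the atoms avoid the origin because $\vc\neq\Vec{0}$ and $\log p^r>0$.
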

\begin{proof}
Recall that $\log Z_E (\vs)$ defined by (\ref{eq:9.19}). From Lemma \ref{lem:EPc}, for $\vt\in\rd$, the normalized function $f_{\vsig}(\vt)$ introduced in (\ref{eq:nor1}) converges when $\langle \vc,\vsig\rangle >1$. Then we have
\begin{equation*}
\begin{split}
&\log f_{\vsig}(\vt)= \log \frac{Z_E (\vsig + {\rm i}\vt)}{Z_E (\vsig)} 
= \sum_p \sum_{k=1}^\eta \sum_{r=1}^{\infty} 
\frac{1}{r}\alpha_k(p)^r p^{-r\langle\vc,\vsig\rangle} \bigl(p^{-r\langle\vc,{\rm i}\vt\rangle} -1\bigr)  \\ 
= & \sum_p \sum_{r=1}^{\infty} \sum_{k=1}^\eta \frac{1}{r} 
\alpha_k(p)^r p^{-r\langle\vc,\vsig\rangle} \bigl(e^{-r\langle\vc,{\rm i}\vt\rangle \log p} -1\bigr) 
=\int_{\rd} (e^{-\langle{\rm i}\vt,x\rangle}-1)N_{\vsig}(dx),
\end{split}
\end{equation*}
where $N_{\vsig}$ is expressed as (\ref{eq:lm1}) since we have $e^{-r\langle\vc,{\rm i}\vt\rangle \log p}=\int_{\rd} e^{-\langle{\rm i}\vt,x\rangle} \delta_{\log p^r \vc} (dx)$. Now put $v := \langle \vc,\vsig\rangle>1$. By the assumption $\alpha_k(p) \in {\mathbb{C}}$, $|\alpha_k(p)|\le 1$ for any $p\in\Prime$ and $1 \le k \le \eta$, it holds that
\begin{equation*}
\begin{split}
N_{\vsig} (\rd) \le & \int_{\rd}\sum_p \sum_{r=1}^{\infty} \sum_{k=1}^\eta \frac{1}{r}
|\alpha_k(p)|^r p^{-r\langle\vc,\vsig\rangle} \delta_{\log p^r \vc} (dx)
= \eta \sum_p \sum_{r=1}^{\infty} \frac{1}{r} p^{-r\langle\vc,\vsig\rangle} \\ 
\le & \, \eta \sum_p \sum_{r=1}^{\infty} p^{-r\langle\vc,\vsig\rangle} \le \eta
\sum_{n=2}^{\infty} \sum_{r=1}^{\infty} n^{-rv} = \eta \sum_{n=2}^{\infty} \frac{n^{-v}}{1-n^{-v}} 
 \le 2 \eta \sum_{n=2}^{\infty} n^{-v} \\
\le & \, 2\eta \zeta (v) <\infty.
\end{split}
\end{equation*}
It is also easy to see that the measure $N_{\vsig}$ satisfies $\int_{|x|<1}|x|N_{\vsig}(dx)\le N_{\vsig}(\rd)<\infty$. 
\end{proof}

\begin{proof}[Proof of Thereom \ref{th:id1}]
First suppose $\sum_{k=1}^\eta \alpha_k(p)^r \ge 0$ for all $r \in {\mathbb{N}}$ and $p\in\Prime$. In this case, we can see that $N_{\vsig}$ is a measure on $\rd$ with $N_{\vsig}(\{0\}) =0$ and $\int_{\rd} (|x| \wedge 1) N_{\vsig}(dx) < \infty$ by Lemma \ref{lem:lm1}. Hence $f_{\vsig}$ is an infinitely divisible characteristic function.

Next we suppose that there exists a pair of $r_0 \in {\mathbb{N}}$ and $p_0 \in\Prime$ such that $\sum_{k=1}^\eta \alpha_k(p_0)^{r_0} \in K$, where $K := \{ z \in {\mathbb{C}} : |z| \le \eta, z \not \in [0,\eta] \}$. Let $r_1,r_2 \in {\mathbb{N}}$ and $p_1, p_2 \in\Prime$. By the fundamental theorem of arithmetic, we have
\begin{equation}
r_1 \log p_1 = r_2 \log p_2 \quad \mbox{ if and only if } \quad r_1=r_2 \mbox{ and } p_1=p_2 .
\label{eq:fta}
\end{equation}
Therefore, one has $\delta_{\log {p_1}^{r_1} \vc} (dx) = \delta_{\log {p_2}^{r_2} \vc} (dx)$ if and only if $r_1=r_2$ and $p_1=p_2$. Hence the normalized function $f_{\vsig}$ is not an infinitely divisible characteristic function by the (not measure but) complex signed measure $\sum_{k=1}^\eta \alpha_k(p_0)^{r_0} \delta_{\log {p_0}^{r_0} \vc}$. 
\end{proof}

\begin{remark}
If $p_1$ or $p_2$ is not a prime number, the statement (\ref{eq:fta}) is not true. For example, when $p_1=2$ and $p_2=8$, we have $6\log p_1 = 2 \log p_2 = \log 64$. Therefore, the Euler product, namely, the product of prime numbers, plays an very important role in the proof of Theorem \ref{th:id1}. 
\end{remark}

\begin{example}\label{ex:id1}
Let $p_n$ be the $n$-th prime number.
\pn$(i)$
The functions $\prod_n (1-{\rm{i}}^np_n^{-s})^{-1}$ and $\prod_n (1-(-{\rm{i}})^np_n^{-s})^{-1}$ are not to generate infinitely divisible characteristic functions. 
\pn$(ii)$
Let $\eta =4$, $\alpha_1(p_n) = \alpha_2 (p_n) =1$, $\alpha_3 (p_n) = {\rm{i}}^n$ and $\alpha_4 (p_n) = (-{\rm{i}})^n$. Then we have $\sum_{k=1}^4 \alpha_k(p_n)^r \ge 0$ for any $n,r \in {\mathbb{N}}$. Hence the Euler product $\prod_n (1-p_n^{-s})^{-2}(1-{\rm{i}}^np_n^{-s})^{-1}(1-(-{\rm{i}})^np_n^{-s})^{-1}$ is to generate an infinitely divisible characteristic function.
\pn$(iii)$ Let $\eta=3$, $\alpha_1(p) \ge 0$, $\alpha_2 (p)= \overline{\alpha_3 (p)}$ and $2|\alpha_2(p)| \le \alpha_1(p)$ for any $p \in\Prime$. Then we have $\sum_{k=1}^3 \alpha_k(p)^r \ge 0$ for any $p \in\Prime$ and $r \in \N$. 
\end{example}

\subsection{Distribution or not}
We have the following.
\begin{theorem}\label{th:d1}
Let $\alpha_k(p) \in {\mathbb{C}}$, $|\alpha_k(p)|\le 1$ for any $p\in\Prime$ and $1 \le k \le \eta$ in (\ref{eq:defpe3}). Then $f_{\vsig}$ is a characteristic function if and only if $a(n) \ge 0$ for all $n \in {\mathbb{N}}$, where the sequence $a(n)$ is defined by (\ref{eq:defsr3}).
\end{theorem}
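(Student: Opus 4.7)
My plan is to recognize $f_{\vsig}$ as the characteristic function of a discrete complex measure on $\rd$ whose weights are controlled by the coefficients $a(n)$, and then prove that this measure is a probability measure exactly when every $a(n)$ is nonnegative.

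First I would substitute \eqref{eq:defsr3} into the definition \eqref{eq:nor1} to obtain
$$
f_{\vsig}(\vt) = \frac{1}{Z_E(\vsig)} \sum_{n=1}^{\infty} \frac{a(n)}{n^{\langle \vc,\vsig\rangle}}\, e^{{\rm i}\langle \vt,\, -\vc\log n\rangle} = \sum_{n=1}^{\infty} c_n\, e^{{\rm i}\langle \vt,\, -\vc\log n\rangle},
$$
where $c_n := a(n)/\bigl(Z_E(\vsig)\, n^{\langle \vc,\vsig\rangle}\bigr)$ and $\sum_n c_n = 1$ (by evaluating at $\vt=\vec{0}$). Since $\vc\ne \vec{0}$, the points $-\vc\log n$ are all distinct in $\rd$, so $f_{\vsig}$ is formally the characteristic function of the discrete complex measure $\mu_{\vsig}:=\sum_n c_n\, \delta_{-\vc\log n}$. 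The "if" direction is then immediate: assuming $a(n)\ge 0$ for all $n$, the identity $a(1)=1$ (Remark \ref{rem:an}) gives $Z_E(\vsig)=\sum_n a(n)/n^{\langle\vc,\vsig\rangle}\ge 1$, so each $c_n\ge 0$ and $\mu_{\vsig}$ is a genuine probability measure with $\widehat{\mu}_{\vsig}=f_{\vsig}$.

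For the "only if" direction, assume $f_{\vsig}=\widehat{\mu}$ for a probability measure $\mu$ on $\rd$. Since $\vc\ne\vec{0}$, I would fix $\vec{e}\in\rd$ with $\langle\vc,\vec{e}\rangle=1$ and pass to the one-parameter slice $g(\tau):=f_{\vsig}(\tau\vec{e})=\sum_n c_n\, n^{-{\rm i}\tau}$ for $\tau\in\R$, which is absolutely convergent by Lemma \ref{lem:EPc}. The key step is the Bohr-mean extraction
$$
\lim_{T\to\infty}\frac{1}{T}\int_0^T g(\tau)\, n_0^{{\rm i}\tau}\, d\tau = c_{n_0}, \qquad n_0\in\N,
$$
which follows term-by-term from the elementary estimate $\bigl|T^{-1}\int_0^T e^{{\rm i}\tau\log(n_0/n)}d\tau\bigr|\le 2/\bigl(T|\log(n_0/n)|\bigr)$ for $n\ne n_0$, together with dominated convergence using $\sum_n|c_n|<\infty$. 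On the other hand, $g(\tau)=\int_\rd e^{{\rm i}\tau\langle\vec{e},x\rangle}\mu(dx)$, so applying the same estimate inside the $\mu$-integral yields
$$
\frac{1}{T}\int_0^T g(\tau)\, n_0^{{\rm i}\tau}\, d\tau \longrightarrow \mu\bigl(\{x\in\rd:\langle\vec{e},x\rangle=-\log n_0\}\bigr)\ge 0.
$$
Thus every $c_{n_0}$ is a nonnegative real. Taking $n_0=1$ forces $c_1 Z_E(\vsig)=a(1)=1$, so $c_1>0$ and $Z_E(\vsig)=1/c_1>0$; consequently $a(n_0)=c_{n_0}\, Z_E(\vsig)\, n_0^{\langle\vc,\vsig\rangle}\ge 0$ for every $n_0\in\N$.

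The only delicate point is the Bohr-mean step, where I must interchange the $T$-average with the sum on one side and with the $\mu$-integral on the other; both interchanges are routine dominated-convergence arguments because the oscillatory integrand is bounded by $1$ and the total mass (respectively $\sum|c_n|$ and $\mu(\rd)=1$) is finite. Aside from this, the proof is essentially the observation that the Dirichlet-series expansion of $Z_E$ already packages $f_{\vsig}$ as a candidate characteristic function, and that the fundamental theorem of arithmetic prevents the atoms $-\vc\log n$ from coalescing.
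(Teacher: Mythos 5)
Your proposal is correct. The ``if'' direction is essentially the paper's Lemma \ref{lem:1}: you both read off from the Dirichlet series \eqref{eq:defsr3} that $f_{\vsig}$ is the Fourier transform of the discrete complex measure $\sum_n c_n\,\delta_{-\log n\,\vc}$ with $c_n = a(n)/(Z_E(\vsig)\,n^{\langle\vc,\vsig\rangle})$, and observe that nonnegativity of the $a(n)$ (together with $a(1)=1$, which forces $Z_E(\vsig)\ge 1$) makes this a probability measure. Where you genuinely diverge is the ``only if'' direction. The paper (Lemma \ref{lem:2}) notes that the candidate measure has finite total variation and then appeals to the uniqueness theorem for Fourier transforms of complex measures with finite total variation: if $f_{\vsig}$ were also the characteristic function of a probability measure $\mu$, uniqueness would force $\mu$ to coincide with the signed/complex measure above, contradicting $a(m)\in\mathbb{C}\setminus\mathbb{R}_{\ge 0}$. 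You instead extract each coefficient directly by a Bohr mean along the line $\tau\mapsto\tau\vec{e}$ with $\langle\vc,\vec{e}\rangle=1$, identifying $c_{n_0}$ with $\mu(\{x:\langle\vec{e},x\rangle=-\log n_0\})\ge 0$; the two interchanges of limit you flag are indeed routine dominated convergence, using $\sum_n|c_n|<\infty$ on one side and $\mu(\rd)=1$ on the other, and the atoms $-\vc\log n$ are distinct simply because $\log$ is injective (the fundamental theorem of arithmetic is not actually needed here, unlike in Theorem \ref{th:id1}). Your route is longer but self-contained and elementary, effectively reproving the relevant instance of the uniqueness theorem by hand and exhibiting each $c_{n_0}$ as an explicit $\mu$-mass; the paper's route is shorter but cites uniqueness as a black box. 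Both are valid proofs of the statement.
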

In order to prove this theorem, we define a generalized Dirichlet $L$ random variable $X_{\vsig}$ with probability distribution on ${\mathbb{R}}^d$ given by
\begin{equation}
{\rm Pr} \bigl(X_{\vsig}= - \log n \vc \bigr)= \frac{1}{Z_E(\vsig)} \frac{a(n)}{n^{\langle \vc, \vsig \rangle}},
\qquad a(n) \ge 0 , \quad n \in \N.
\label{eq:gdsd}
\end{equation}
It is easy to see that these distributions are probability distributions (see also Lemma \ref{lem:1}) since $a(n)n^{-\langle \vc, \vsig \rangle} \ge 0$ for each $n \in {\mathbb{N}}$, and 
$$
\sum_{n=1}^{\infty} \frac{a(n)n^{-\langle \vc, \vsig \rangle}}{Z_E(\vsig)} = \frac{1}{Z_E(\vsig)} 
\sum_{n=1}^\infty \frac{a(n)}{n^{\langle \vc, \vsig \rangle}}=\frac{Z_E(\vsig)}{Z_E(\vsig)}=1 .
$$
Note that these distributions belong to a special case of multidimensional Shintani zeta distribution defined by Aoyama and Nakamura \cite{AN12s}.

We immediately obtain Theorem \ref{th:d1} by using the following Lemmas \ref{lem:1} and \ref{lem:2}. It should be mentioned that these lemmas already have been in \cite[Lemmas 2.2 and 2.3]{Nakamura12} when $d=1$.

\begin{lemma}\label{lem:1}
Let $X_{\vsig}$ be a generalized Dirichlet $L$ random variable. Then its characteristic function $f_{\vsig}$ is given by (\ref{eq:nor1}).
\end{lemma}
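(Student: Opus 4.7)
The plan is to compute the characteristic function of $X_{\vsig}$ by direct substitution and then recognize the resulting Dirichlet series as $Z_E(\vsig + {\rm i}\vt)$.

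Starting from the definition $\widehat{\mu}(\vt) = E[e^{{\rm i}\langle \vt, X_{\vsig}\rangle}]$, I would use the point-mass distribution (\ref{eq:gdsd}) to write
\begin{equation*}
E[e^{{\rm i}\langle \vt, X_{\vsig}\rangle}] = \sum_{n=1}^{\infty} e^{{\rm i}\langle \vt, -\log n\, \vc\rangle} \frac{1}{Z_E(\vsig)} \frac{a(n)}{n^{\langle \vc,\vsig\rangle}} = \frac{1}{Z_E(\vsig)} \sum_{n=1}^{\infty} \frac{a(n)\, e^{-{\rm i}(\log n)\langle \vc,\vt\rangle}}{n^{\langle \vc,\vsig\rangle}}.
\end{equation*}
The key identity is $e^{-{\rm i}(\log n)\langle \vc,\vt\rangle} = n^{-{\rm i}\langle \vc,\vt\rangle}$, so that $e^{-{\rm i}(\log n)\langle \vc,\vt\rangle}\, n^{-\langle \vc,\vsig\rangle} = n^{-\langle \vc,\vsig + {\rm i}\vt\rangle}$. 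Substituting this and using the Dirichlet series representation of $Z_E$ in (\ref{eq:defsr3}) gives
\begin{equation*}
E[e^{{\rm i}\langle \vt, X_{\vsig}\rangle}] = \frac{1}{Z_E(\vsig)} \sum_{n=1}^{\infty} \frac{a(n)}{n^{\langle \vc,\vsig + {\rm i}\vt\rangle}} = \frac{Z_E(\vsig+{\rm i}\vt)}{Z_E(\vsig)} = f_{\vsig}(\vt).
\end{equation*}

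The only step that needs justification is the interchange of summation implicit in identifying the series with $Z_E(\vsig+{\rm i}\vt)$, but this is immediate: since $|\alpha_k(p)|\le 1$, Proposition \ref{pro:st2.2} (equivalently Lemma \ref{lem:mdst} with $\varphi=1$) yields $a(n) = O(n^{\varepsilon})$, and the assumption $\langle \vc,\vsig\rangle > 1$ then guarantees absolute convergence of $\sum_n a(n) n^{-\langle \vc,\vsig+{\rm i}\vt\rangle}$ uniformly in $\vt$. Consequently the expectation is well defined and the manipulation above is rigorous. There is no substantive obstacle; the statement is essentially a verification that the normalized Dirichlet series expansion of $Z_E$ encodes the Fourier transform of the point-mass measure supported on $\{-\log n\, \vc : n \in \N\}$.
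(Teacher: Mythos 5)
Your proposal is correct and follows essentially the same route as the paper: both compute $E[e^{{\rm i}\langle \vt, X_{\vsig}\rangle}]$ directly from the point masses in (\ref{eq:gdsd}), use $e^{-{\rm i}(\log n)\langle \vc,\vt\rangle} = n^{-{\rm i}\langle \vc,\vt\rangle}$ to fold the exponential into the Dirichlet series, and identify the result with $Z_E(\vsig+{\rm i}\vt)/Z_E(\vsig)$. Your added remark on absolute convergence via $a(n)=O(n^{\varepsilon})$ is a justification the paper leaves implicit (it is recorded just after (\ref{eq:defsr3})), so nothing is missing.
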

\begin{proof}
By the definition, we have, for any $\vt \in {\mathbb{R}}^d$, 
\begin{equation*}
\begin{split}
f_{\vsig}(\vt) = & \sum_{n=1}^\infty 
e^{{\rm i} \langle \vt, - \log n \vc \rangle} \frac{a(n)n^{-\langle \vc, \vsig \rangle}}{Z_E(\vsig)} =
\frac{1}{Z_E(\vsig)} \sum_{n=1}^\infty 
\frac{e^{-{\rm i} \langle \vc, \vt \rangle \log n}a(n)}{n^{\langle \vc, \vsig \rangle}} \\ = &
\frac{1}{Z_E(\vsig)} \sum_{n=1}^\infty  
\frac{a(n)}{n^{\langle \vc, \vsig \rangle+ {\rm i} \langle \vc, \vt \rangle}} =
\frac{Z_E(\vsig +{\rm i}\vt)}{Z_E(\vsig)}.
\end{split}
\end{equation*}
This equality implies the lemma.
\end{proof}

\begin{lemma}\label{lem:2}
Suppose that there exists $m \in {\mathbb{N}}$ such that $a(m) \in {\mathbb{C}} \setminus {\mathbb{R}}_{\ge 0}$. Then the function $Z_E(\vsig +{\rm i}\vt)/Z_E(\vsig)$ is not a characteristic function. 
\end{lemma}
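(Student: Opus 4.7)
The plan is to argue by contradiction: if $f_{\vsig}$ were a characteristic function, I would identify the purported probability measure with the discrete complex measure one reads directly off the Dirichlet series \eqref{eq:defsr3}, and then extract a non-negativity violation at the atom sitting at $-\log m\,\vc$. The mechanism is uniqueness of the Fourier transform on finite complex Borel measures, so beyond ensuring the Dirichlet expansion converges absolutely, no number-theoretic input is needed.

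First, I would assume $f_{\vsig} = \widehat{\mu}$ for some probability measure $\mu$ on ${\mathbb{R}}^d$. Inserting \eqref{eq:defsr3} into \eqref{eq:nor1} gives
$$
f_{\vsig}(\vt) = \sum_{n=1}^{\infty} b(n)\, e^{-{\rm i}\langle \vc,\vt\rangle \log n}, \qquad b(n) := \frac{a(n)}{Z_E(\vsig)\, n^{\langle \vc,\vsig\rangle}},
$$
and the estimate $|a(n)| = O(n^{\varepsilon})$ from Lemma \ref{lem:mdst}, together with $\langle \vc,\vsig\rangle >1$, gives $\sum_n |b(n)| < \infty$. Hence
$$
\nu := \sum_{n=1}^{\infty} b(n)\, \delta_{-\log n\,\vc}
$$
is a finite complex Borel measure on ${\mathbb{R}}^d$ whose Fourier transform $\widehat{\nu}(\vt)$ coincides with $f_{\vsig}(\vt)=\widehat{\mu}(\vt)$ for every $\vt \in {\mathbb{R}}^d$.

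Next, I would invoke the uniqueness of the Fourier transform for finite complex Borel measures on ${\mathbb{R}}^d$ (a classical consequence of the density of Schwartz functions in $C_0({\mathbb{R}}^d)$) to conclude $\mu = \nu$. Because $\vc$ is nonzero (Definition \ref{def:EP}), the atoms $\{-\log n\,\vc : n\in\N\}$ are pairwise distinct, so $\mu(\{-\log n\,\vc\}) = b(n)$ for every $n\in\N$, and $\mu$ being a probability measure forces $b(n) \ge 0$ for all $n$. Taking $n=1$, using $a(1)=1$ (Remark \ref{rem:an}) and the fact that $Z_E(\vsig)$ is finite and nonzero (Lemma \ref{lem:EPc}), we obtain $Z_E(\vsig) > 0$. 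Combining $b(m) \ge 0$ with $Z_E(\vsig) > 0$ and $m^{\langle \vc,\vsig\rangle}>0$ then forces $a(m) \in {\mathbb{R}}_{\ge 0}$, contradicting the hypothesis on $a(m)$.

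I do not anticipate a real obstacle here; the one step meriting a line of justification is Fourier uniqueness for finite complex measures, which is standard. It is worth noting that the fundamental theorem of arithmetic plays no role in this direction, unlike in the proof of Theorem \ref{th:id1}: here the distinct support points $-\log n\,\vc$ are separated simply because $n \mapsto \log n$ is injective on $\N$.
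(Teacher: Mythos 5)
Your proposal is correct and follows essentially the same route as the paper: both identify $f_{\vsig}$ with the Fourier transform of the finite discrete complex measure $\sum_n a(n)\,Z_E(\vsig)^{-1} n^{-\langle \vc,\vsig\rangle}\delta_{-\log n\,\vc}$, invoke uniqueness of the Fourier transform for finite complex (signed) measures, and conclude that a non-real or negative $a(m)$ is incompatible with a probability measure. Your write-up is somewhat more explicit than the paper's on two points the paper leaves implicit --- the pairwise distinctness of the atoms $-\log n\,\vc$ and the deduction $Z_E(\vsig)>0$ from $b(1)\ge 0$ --- but these are refinements of the same argument, not a different one.
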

\begin{proof}
Let ${\mathbb{N}}_+$ be the set of integers $n$ such that $a(n)\ge 0$ and ${\mathbb{N}}_+^c$ be the set of integers $m$ such that $a(m) \in {\mathbb{C}} \setminus {\mathbb{R}}_{\ge 0}$. From the view of (\ref{eq:gdsd}), we have 
\begin{equation*}
\begin{split}
& \frac{Z_E(\vsig +{\rm i}\vt)}{Z_E(\vsig)} = \frac{1}{Z_E(\vsig)} 
\sum_{n=1}^\infty \frac{a(n)}{n^{\langle \vc, \vsig \rangle+{\rm i}\langle \vc, \vt \rangle}}= 
\frac{1}{Z_E(\vsig)} \sum_{m \in {\mathbb{N}}_+^c} 
\frac{a(m)}{m^{\langle \vc, \vsig \rangle+{\rm i}\langle \vc, \vt \rangle}} + 
\frac{1}{Z_E(\vsig)} \sum_{n \in {\mathbb{N}}_+} 
\frac{a(n)}{n^{\langle \vc, \vsig \rangle+{\rm i}\langle \vc, \vt \rangle}} \\ &=
\frac{1}{Z_E(\vsig)} \int_{{\mathbb{R}}^d} e^{{\rm i}\langle \vt, x \rangle} 
\sum_{m \in {\mathbb{N}}_+^c} \frac{a(m)}{m^{\langle \vc, \vsig \rangle}} \delta_{- \log m \vc} (dx) + 
\frac{1}{Z_E(\vsig)} \int_{{\mathbb{R}}^d} e^{{\rm i} \langle \vt, x \rangle} 
\sum_{n \in {\mathbb{N}}_+} \frac{a(n)}{n^{\langle \vc, \vsig \rangle}} \delta_{- \log n \vc} (dx) \\ &=
\frac{1}{Z_E(\vsig)} \sum_{m \in {\mathbb{N}}_+^c} \frac{a(m)}{m^{\langle \vc, \vsig \rangle}} 
\int_{{\mathbb{R}}^d} e^{{\rm i}\langle \vt, x \rangle} \delta_{- \log m \vc} (dx) + 
\frac{1}{Z_E(\vsig)} \sum_{n \in {\mathbb{N}}_+} \frac{a(n)}{n^{\langle \vc, \vsig \rangle}} 
\int_{{\mathbb{R}}^d} e^{{\rm i}\langle \vt, x \rangle} \delta_{- \log n \vc} (dx).
\end{split}
\end{equation*}
Note that $a(1)=1$ by Remark \ref{rem:an}. By the set of integers $m$ such that $a(m)< 0$, 
\begin{equation}
\frac{1}{Z_E(\vsig)} \sum_{m \in {\mathbb{N}}_+^c} \frac{a(m)}{m^{\langle \vc, \vsig \rangle}} 
\delta_{- \log m \vc} (dx) + 
\frac{1}{Z_E(\vsig)} \sum_{n \in {\mathbb{N}}_+} \frac{a(n)}{n^{\langle \vc, \vsig \rangle}} 
\delta_{- \log n \vc} (dx)
\label{me:1}
\end{equation}
is not a measure but a complex signed measure. Moreover, we have
$$
\frac{1}{Z_E(\vsig)} \int_{{\mathbb{R}}^d} \sum_{n=1}^\infty \left| \frac{a(n)}{n^{\langle \vc, \vsig \rangle}} 
\right| \delta_{- \log n \vc} (dx) = 
\frac{1}{Z_E(\vsig)} \sum_{n=1}^\infty \frac{|a(n)|}{n^{\langle \vc, \vsig \rangle}} < \infty
$$
by the assumption $\Re \langle \vc,\vs\rangle >1$ and the fact that $a(n) = O(n^{\varepsilon})$ (see the proof of Lemma \ref{lem:mdst}). Hence the complex signed measure (\ref{me:1}) has finite total variation. It is known that any complex signed measure with finite total variation is uniquely determined by the Fourier transform. Therefore, $Z_E(\vsig +{\rm i}\vt)/Z_E(\vsig)$ is not a characteristic function.
\end{proof}

\begin{remark}\label{rem:ans1}
Let $n$ be a integer written by $n=p_1^{r_1} \cdots p_j^{r_j}$, where $p_1, \ldots , p_j$ are distinct prime numbers and $r_1, \ldots , r_j \in {\mathbb{N}}$. By Lemma \ref{lm:A}, any coefficient $a(n)$ in the Dirichlet series (\ref{eq:defsr3}) is multiplicative. Namely, it holds that $a(n) = a(p_1^{r_1}) \cdots a(p_j^{r_j})$. Therefore, we have $a(n) \ge 0$ for any $n \in {\mathbb{N}}$ if and only if 
$$
a(p^r)=\sum_{\substack{0 \le \theta_1, \ldots , \theta_\eta \\ \theta_1+ \cdots +\theta_\eta = r}}\prod_{k=1}^\eta \alpha_k(p)^{\theta_k} 
$$
is non-negative for any $r \in {\mathbb{N}}$ and $p\in\Prime$. 
\end{remark}

\begin{example}
We use the same notation appeared in Example \ref{exa:dqi1}. 
\pn$(v)$
Let $\alpha (2) =0$ and $\alpha(p) = (-1)^{(p-1)/2}$ for any $p>3$. Then the function $L(s)$ defined by (\ref{eq:defLep}) or (\ref{eq:defLds}) is not to generate a characteristic function.
\pn$(vi)$
Let $\eta =2$, $\alpha_1 (2) = 1$, $\alpha_2 (2) =0$, $\alpha_1(p) =1$ and $\alpha_2(p) = (-1)^{(p-1)/2}$ for any $p>3$. Then we have $a(n)=\sum_{d \mid n} \chi_{-4}(d) \ge 0$ from Example \ref{exa:dqi1}. Hence $\zeta_{{\mathbb{Q}}({\rm i})} (s)= \zeta (s) L(s)$ is to generate a characteristic function.
\pn$(vii)$ Let $\prod_{k=1}^\eta (1-\alpha_k(p)p^{-s}) = (1-\beta_1(p)p^{-s}- \cdots -\beta_\eta (p) p^{-\eta s})$, where $\beta_k(p) \ge 0$, $1 \le k \le \eta$ and $\sup_{\Re (s) \ge 1}|\sum_{k=1}^\eta \beta_k(p)p^{-ks}|<1$. Then we have $a(n) \ge 0$ for any $n \in \N$. This is proved the following equation
$$
\prod_{k=1}^\varphi (1-\alpha_k(p)p^{-s})^{-1} = 
\biggl(1-\sum_{k=1}^\eta \frac{\beta_k(p)}{p^{ks}} \biggr)^{-1} =
1+\sum_{j=1}^\infty \biggl(\sum_{k=1}^\eta \frac{\beta_k(p)}{p^{ks}} \biggr)^j.
$$
For instance, when $\eta=3$, $\alpha_1(p) = 1$ and $5\alpha_2(p) = -1-3{\rm{i}}$, $5\alpha_3(p) =-1+3{\rm{i}}$, we have $\prod_{k=1}^3 (1-\alpha_k(p)p^{-s})^{-1} = (1- (3/5) p^{-s} - (2/5) p^{-3s})^{-1}$. 
\end{example}

\subsection{Classification}
Let $F_E$ be the set of normalized functions of $Z_E(\vsig +{\rm i}\vt)/Z_E(\vsig)$, where $Z_E(\vsig +{\rm i}\vt)$ is defined by (\ref{eq:defpe3}) or (\ref{eq:defsr3}). Moreover, let $\widehat{I\!D}$, $\widehat{I\!D^0}$ and $\widehat{N\!D}$ be the class of infinitely divisible characteristic functions, the class of quasi-infinitely divisible but non-infinitely divisible characteristic functions, and the class of functions not even characteristic functions, respectively. Then we have the following theorem.
\begin{theorem}\label{th:cla1}
Let $f_{\vsig} \in F_E$, $\alpha_k(p) \in {\mathbb{C}}$, $|\alpha_k(p)|\le 1$ for any $p\in\Prime$ and $1 \le k \le \eta$ in (\ref{eq:defpe3}). Then we have\\
$(I)$ $f_{\vsig} \in \widehat{I\!D}$ if and only if $\sum_{k=1}^\eta \alpha_k(p)^r \ge 0$ for all $r \in {\mathbb{N}}$ and $p\in\Prime$.\\
$(II)$ $f_{\vsig} \in \widehat{I\!D^0}$ if and only if $a(n) \ge 0$ for all $n \in {\mathbb{N}}$, and there exist $r_0 \in {\mathbb{N}}$ and $p_0 \in\Prime$ such that $\sum_{k=1}^\eta \alpha_k(p_0)^{r_0} <0$.\\
$(III)$ $f_{\vsig} \in \widehat{N\!D}$ if and only if there exists $m \in {\mathbb{N}}$ such that $a(m) \in {\mathbb{C}} \setminus {\mathbb{R}}_{\ge 0}$. \\
$(IV)$ $F_E= \widehat{I\!D} \biguplus \widehat{I\!D^0} \biguplus \widehat{N\!D}$. 
\end{theorem}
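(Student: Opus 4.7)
The plan is to assemble parts (I)--(IV) from Theorems \ref{th:id1} and \ref{th:d1} together with one elementary observation about symmetric functions. Part (I) is a direct restatement of Theorem \ref{th:id1}, and part (III) is the contrapositive of Theorem \ref{th:d1}: since $a(n) \ge 0$ for all $n$ is exactly the characterization of $f_\vsig$ being a characteristic function, the existence of some $a(m) \in \mathbb{C}\setminus \mathbb{R}_{\ge 0}$ is equivalent to $f_\vsig \in \widehat{N\!D}$.

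The nontrivial content sits in part (II). For the ``only if'' direction, a quasi-infinitely divisible $f_\vsig$ is in particular a characteristic function, so Theorem \ref{th:d1} supplies $a(n) \ge 0$ for all $n$, while non-infinite-divisibility combined with Theorem \ref{th:id1} yields some pair $(p_0,r_0)$ for which $\sum_{k=1}^\eta \alpha_k(p_0)^{r_0}$ fails to be a nonnegative real. The only subtlety is upgrading ``not $\ge 0$'' to the strict inequality ``$<0$'' in the statement. For this I would view $a(p^r)$ as the complete homogeneous symmetric polynomial $h_r = h_r(\alpha_1(p),\ldots,\alpha_\eta(p))$ and invoke Newton's identity
\[
r \, h_r = \sum_{i=1}^{r} \biggl( \sum_{k=1}^\eta \alpha_k(p)^i \biggr) h_{r-i}.
\]
With $h_0 = 1$ and each $h_r = a(p^r) \in \mathbb{R}_{\ge 0}$ by hypothesis, an induction on $r$ shows that every power sum $\sum_{k=1}^\eta \alpha_k(p)^r$ is real, so ``not $\ge 0$'' collapses to ``$<0$''.

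For the ``if'' direction of (II), assume $a(n) \ge 0$ for all $n$ and some $\sum_{k=1}^\eta \alpha_k(p_0)^{r_0} < 0$. Theorem \ref{th:d1} makes $f_\vsig$ a characteristic function and Theorem \ref{th:id1} precludes infinite divisibility. To upgrade from ``characteristic function'' to ``quasi-infinitely divisible'' I would reuse the log-Euler-product computation from the proof of Theorem \ref{th:id1} to write
\[
\log f_\vsig(\vt) = \int_{\rd} \bigl( e^{-{\rm i}\langle \vt, x\rangle} - 1 \bigr)\, N_\vsig(dx),
\]
with $N_\vsig$ as in (\ref{eq:lm1}). The same Newton's-identity argument, applied to the hypothesis $a(n) \ge 0$, makes every coefficient $\sum_{k=1}^\eta \alpha_k(p)^r$ real, so $N_\vsig$ is a real signed measure; Lemma \ref{lem:lm1} shows its total variation is finite and $N_\vsig(\{0\})=0$. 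Rewriting in the shape (\ref{INF2}) with $A=0$ and $\gamma_0 = \int_{\rd} x(1+|x|^2)^{-1} N_\vsig(dx)$ delivers the quasi-L\'evy--Khintchine triplet, so $f_\vsig \in \widehat{I\!D^0}$.

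Finally, part (IV) is a trichotomy check: any $f_\vsig \in F_E$ falls into exactly one of the three cases --- some $a(m) \not\in \mathbb{R}_{\ge 0}$; all $a(n) \ge 0$ but some power sum is negative; or all power sums are $\ge 0$ (in which case the same identity, run from the power sums back to the $h_r$, also forces all $a(n) \ge 0$). These correspond respectively to (III), (II), (I), giving $F_E = \widehat{I\!D} \biguplus \widehat{I\!D^0} \biguplus \widehat{N\!D}$. The main obstacle I anticipate is the Newton's-identity step converting nonnegativity of the $a(p^r)$ into reality of the power sums; everything else is organizational bookkeeping built on Theorems \ref{th:id1} and \ref{th:d1}.
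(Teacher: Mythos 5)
Your proof is correct, and (I), (III), (IV) are assembled from Theorems \ref{th:id1} and \ref{th:d1} exactly as in the paper. Where you genuinely diverge is the key step behind (II) and the trichotomy: the claim that $a(n)\ge 0$ for all $n$ forces every power sum $\sum_{k=1}^\eta \alpha_k(p)^r$ to be real, so that ``not $\ge 0$'' collapses to ``$<0$''. The paper isolates this as Lemma \ref{lem:cla1} and proves it analytically: realness of the $a(n)$ gives $\overline{Z_E(\vsig+{\rm i}\vt)}=Z_E(\vsig-{\rm i}\vt)$, and comparing the Dirichlet-series expansions of $\log Z_E$ on both sides via the uniqueness theorem for Dirichlet series yields $\sum_k \overline{\alpha_k(p)}{}^r=\sum_k\alpha_k(p)^r$. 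You instead note that $a(p^r)$ is the complete homogeneous symmetric polynomial $h_r$ in $\alpha_1(p),\dots,\alpha_\eta(p)$ and run Newton's identity $rh_r=\sum_{i=1}^r p_i h_{r-i}$ forward by induction on $r$; this is purely algebraic, local in each prime $p$, and needs only $a(p^r)\in\R$ rather than the global Dirichlet-series uniqueness argument. Run in the reverse direction it also gives you exhaustiveness in (IV) directly (all power sums $\ge 0$ forces all $h_r\ge 0$), where the paper instead chains (I) with Theorem \ref{th:d1} through ``infinitely divisible $\Rightarrow$ characteristic function''. Both routes are sound; yours is arguably more elementary and makes the multiplicativity reduction of Remark \ref{rem:ans1} do all the work. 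One cosmetic slip in the ``if'' direction of (II): after pushing $N_{\vsig}$ forward under $x\mapsto -x$, the drift $\int_{\rd}x(1+|x|^2)^{-1}N_{\vsig}(dx)$ should be assigned to $\gamma$ in \eqref{INF} (equivalently $\gamma_0=0$ in \eqref{INF2}), not to $\gamma_0$; since $N_{\vsig}$ is a finite signed measure supported away from the origin this changes nothing of substance, but the triplet as you wrote it does not literally match \eqref{INF2}.
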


In order to prove the theorem above, we only have to show the following lemma since we obtain the statement $(IV)$ is proved by $(I)$, $(II)$ and $(III)$, and the statements $(I)$ and $(III)$ are proved by Theorems \ref{th:id1} and \ref{th:d1}, respectively.
\begin{lemma}\label{lem:cla1}
Let $\alpha_k(p) \in {\mathbb{C}}$, $|\alpha_k(p)|\le 1$ for any $p\in\Prime$ and $1 \le k \le \eta$ in (\ref{eq:defpe3}). Suppose that $a(n) \ge 0$ for all $n \in {\mathbb{N}}$, where $a(n)$ is defined by (\ref{eq:defsr3}). Then $\sum_{k=1}^\eta \alpha_k(p)^r$ is real for any $r \in {\mathbb{N}}$ and $p\in\Prime$. 
\end{lemma}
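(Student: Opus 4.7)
The plan is to fix a prime $p$ and connect the two quantities via classical symmetric-function identities. By Proposition~\ref{pro:st2.2}, specialized to the single-direction case at hand, the coefficient $a(p^r)$ is precisely the $r$-th complete homogeneous symmetric polynomial in the local roots,
$$a(p^r) \,=\, h_r \,:=\, \sum_{\theta_1+\cdots+\theta_\eta = r}\, \prod_{k=1}^\eta \alpha_k(p)^{\theta_k},$$
while the quantity I want to show is real is exactly the $r$-th power sum $P_r := \sum_{k=1}^\eta \alpha_k(p)^r$.

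I would then invoke Newton's identity linking $h_r$ and $P_r$,
$$r\, h_r \,=\, \sum_{j=1}^{r} P_j\, h_{r-j} \qquad (r \ge 1,\ h_0 = 1),$$
and rewrite it as
$$P_r \,=\, r\, h_r \,-\, \sum_{j=1}^{r-1} P_j\, h_{r-j}.$$
Newton's identity is a formal identity in the ring of symmetric polynomials, so it holds for any complex values of $\alpha_k(p)$.

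The rest is an induction on $r$. The base case $r=1$ reads $P_1 = h_1 = a(p) \ge 0$, which is in particular real. For the inductive step I assume $P_1, \ldots, P_{r-1} \in \R$; then $h_1, \ldots, h_{r-1}$ are also real, since iterating Newton's identity expresses each $h_j$ as a polynomial with rational coefficients in $P_1, \ldots, P_j$; and $h_r = a(p^r)$ is real by hypothesis. Substituting into the rearranged identity yields $P_r \in \R$, completing the induction.

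I do not anticipate a serious obstacle: once one recognizes $a(p^r)$ as the complete homogeneous symmetric polynomial in $\alpha_1(p), \ldots, \alpha_\eta(p)$, Newton's identity delivers the required recursion essentially for free. The only bookkeeping point is that the argument is carried out prime by prime, which is legitimate because $a$ is multiplicative by Lemma~\ref{lm:A}, so the hypothesis $a(n) \ge 0$ for all $n \in \N$ implies $a(p^r) \ge 0$ for every prime $p$ and every $r \in \N$, which is all the induction requires.
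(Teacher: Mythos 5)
Your proof is correct, and it takes a genuinely different route from the paper's. The paper argues analytically: from $a(n)\ge 0$ it deduces $a(n)\in\mathbb{R}$ for all $n$, hence $\overline{Z_E(\vsig+\vt)}=Z_E(\vsig-\vt)$, then expands $\log Z_E$ as the Dirichlet series \eqref{eq:9.19} on both sides and invokes the uniqueness theorem for Dirichlet series to match coefficients at each frequency $r\log p$, concluding $\sum_{k}\overline{\alpha_k(p)}{}^r=\sum_k\alpha_k(p)^r$. You instead work purely algebraically and locally at each prime: identifying $a(p^r)$ with the complete homogeneous symmetric polynomial $h_r$ in $\alpha_1(p),\dots,\alpha_\eta(p)$ is exactly right by \eqref{eq:defsr3}, and the identity $rh_r=\sum_{j=1}^r P_j h_{r-j}$ (which follows from the logarithmic derivative of $\prod_k(1-\alpha_k(p)t)^{-1}=\sum_r h_r t^r$) gives the induction cleanly, since every $h_j=a(p^j)$ is real directly from the hypothesis. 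Your version is more elementary in that it needs no analytic input (no convergence or uniqueness of Dirichlet series, and implicitly no appeal to the fundamental theorem of arithmetic to separate the frequencies $r\log p$), and it makes transparent that only the reality of the finitely many values $a(p),\dots,a(p^r)$ at the single prime $p$ is used to get $P_r\in\mathbb{R}$. Two cosmetic remarks: the appeal to multiplicativity of $a$ is superfluous, since $a(p^r)\ge 0$ is already an instance of the hypothesis $a(n)\ge 0$ for all $n\in\mathbb{N}$; and in the inductive step the parenthetical re-derivation of the reality of $h_1,\dots,h_{r-1}$ via Newton's identity is likewise unnecessary for the same reason.
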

\begin{proof}
By (\ref{eq:defsr3}) and the assumption that $a(n) \ge 0$ for all $n \in {\mathbb{N}}$, namely, $a(n) \in {\mathbb{R}}$ for any $n \in {\mathbb{N}}$, we have $\overline{Z_E(\vsig+\vt)} = Z_E (\vsig-\vt)$, where $\overline{z}$ is the complex conjugate of $z \in {\mathbb{C}}$. In the view point of Lemma \ref{lem:lm1}, we have
\begin{equation*}
\begin{split}
\log \overline{Z_E(\vsig+\vt)} =& \sum_p \sum_{r=1}^{\infty} \sum_{k=1}^\eta \frac{1}{r} \overline{\alpha_k(p)^r p^{-r\langle\vc,\vsig\rangle-r\langle\vc,{\rm i}\vt\rangle}} =
\sum_p \sum_{r=1}^{\infty} \sum_{k=1}^\eta \frac{1}{r} \overline{\alpha_k(p)} {}^r 
p^{-r\langle\vc,\vsig\rangle + r\langle\vc,{\rm i}\vt\rangle}, \\
\log Z_E (\vsig-\vt) =& \sum_p \sum_{r=1}^{\infty} 
\sum_{k=1}^\eta \frac{1}{r} \alpha_k(p)^r p^{-r\langle\vc,\vsig\rangle + r\langle\vc,{\rm i}\vt\rangle} .
\end{split}
\end{equation*}
Therefore, we obtain $\sum_{k=1}^\eta \overline{\alpha_k(p)} {}^r = \sum_{k=1}^\eta \alpha_k(p)^r$ for any $r \in {\mathbb{N}}$ and $p\in\Prime$ by uniqueness theorem for Dirichlet series (see for example \cite[Theorem 11.3]{Apo}). 
\end{proof}

\begin{example}\label{exa:q}
Let $\eta=3$, $d= \vc =1$, $\alpha_1(2)=1$, $\alpha_2(2)=-\alpha_3(2)={\rm{i}}$, and $\alpha_1(p) = \alpha_2(p) = \alpha_3(p) = 0$ for any $p>3$. Then one has 
$$
\sum_{k=1}^3 \alpha_k (2)^{4j-3}=1, \quad \sum_{k=1}^3 \alpha_k (2)^{4j-2}=-1, \quad
\sum_{k=1}^3 \alpha_k (2)^{4j-1}=1, \quad \sum_{k=1}^3 \alpha_k (2)^{4j}=3
$$
for any $j \in {\mathbb{N}}$. On the other hand, we have
\begin{equation*}
\begin{split}
Z_Q (s) &:= \frac{1}{(1-2^{-s})(1-{\rm{i}}2^{-s})(1+{\rm{i}}2^{-s})} = \frac{1}{(1-2^{-s})(1+2^{-2s})} = 
\frac{1+2^{-s}}{(1-2^{-2s})(1+2^{-2s})} \\ &= \frac{1+2^{-s}}{1-2^{-4s}}
= \bigl(1+2^{-s}\bigl) \sum_{n=0}^\infty 2^{-4ns} = 
\sum_{n=0}^\infty \bigl( 2^{-4ns} + 2^{-(4n+1)s} \bigr) \\
&= \sum_{n=1}^\infty \frac{a(n)}{n^s}, \qquad  a(n) :=
\begin{cases}
1 & n= 2^{4j}, 2^{4j+1}, \,\,\, j=0,1,2,3,\ldots , \\
0 & \mbox{otherwise}.
\end{cases}
\end{split}
\end{equation*}
Hence this function satisfies $\sum_{k=1}^3 \alpha_k (2)^{4j-2} <0$ for any $j \in {\mathbb{N}}$ and $a(n) \ge 0$ for all $n \in {\mathbb{N}}$. Therefore, there exists a $f_{\sigma} \in F_E$ such that $f_{\sigma} \in \widehat{I\!D^0}$.
\end{example}
\begin{remark}\label{re:col}
Let $n \in {\mathbb{N}} \cup \{0\}$ and put
$$
F_n (\sigma, t):= \frac{(1-2^{-\sigma})^n(1-{\rm{i}}2^{-\sigma})(1+{\rm{i}}2^{-\sigma})}{(1-2^{-\sigma-{\rm{i}}t})^n(1-{\rm{i}}2^{-\sigma-{\rm{i}}t})(1+{\rm{i}}2^{-\sigma-{\rm{i}}t})} .
$$
Then it holds that
$$
\sum_{k=1}^{n+2} \alpha_k (2)^{4j-3}=n, \quad \sum_{k=1}^{n+2} \alpha_k (2)^{4j-2}= n-2, \quad
\sum_{k=1}^{n+2} \alpha_k (2)^{4j-1}=n, \quad \sum_{k=1}^{n+2} \alpha_k (2)^{4j}= n+2
$$
for any $j \in {\mathbb{N}}$. Hence we have $F_1 \in \widehat{I\!D^0}$ (see Example \ref{exa:q}) and $F_n \in \widehat{I\!D}$ when $n \ge 2$. Moreover, one has $F_0 \in \widehat{N \! D}$. This is proved by
$$
F_0 (\sigma, t)= \frac{(1-{\rm{i}}2^{-\sigma})(1+{\rm{i}}2^{-\sigma})}{(1-{\rm{i}}2^{-\sigma-{\rm{i}}t})(1+{\rm{i}}2^{-\sigma-{\rm{i}}t})} = \frac{1+2^{-2\sigma}}{1+2^{-2\sigma-2{\rm{i}}t}} =
\frac{\sum_{j=0}^\infty (-4)^{-j(\sigma+{\rm{i}}t)}}{\sum_{j=0}^\infty (-4)^{-j\sigma}} 
$$
and (III) of Theorem \ref{th:cla1}. 
\end{remark}

Hence one has $\widehat{I\!D^0} \ne \emptyset$ in general (see also Example \ref{exa:123}). However, the following proposition implies $\widehat{I\!D^0} = \emptyset$ when $\alpha_k(p) \in \{ 0,1,-1\}$. This result is proved in \cite[Theorem 3.1]{ANPE}. We give a simpler proof here by using Theorems \ref{th:cla1}. 
\begin{proposition}[{\cite[Theorem 3.1]{ANPE}}]
Let $\alpha_k(p) \in \{ 0,1,-1\}$ for any $p\in\Prime$ and $1 \le k \le \eta$ in (\ref{eq:defpe3}). Then it holds that $F_E= \widehat{I\!D} \biguplus \widehat{N\!D}$. Namely, we have $f_{\vsig} \in \widehat{N\!D}$ if and only if there exist $r_0 \in {\mathbb{N}}$ and $p_0 \in\Prime$ such that $\sum_{k=1}^\eta \alpha_k(p_0)^{r_0} <0$. 
\end{proposition}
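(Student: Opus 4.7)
The plan is to reduce everything to the classification in Theorem \ref{th:cla1}. By part (IV) of that theorem we already have the disjoint decomposition $F_E = \widehat{I\!D} \biguplus \widehat{I\!D^0} \biguplus \widehat{N\!D}$, so it suffices to prove that $\widehat{I\!D^0} = \emptyset$ under the restriction $\alpha_k(p) \in \{-1,0,1\}$; the stated equivalence for $\widehat{N\!D}$ then follows by taking complements in part (I).

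The core step is a parity observation. By part (II) of Theorem \ref{th:cla1}, membership in $\widehat{I\!D^0}$ would require both $a(n) \ge 0$ for every $n \in \N$ and the existence of a pair $(p_0,r_0)$ with $\sum_{k=1}^\eta \alpha_k(p_0)^{r_0} < 0$. However, since $\alpha_k(p) \in \{-1,0,1\}$, we have $\alpha_k(p)^r \in \{0,1\}$ whenever $r$ is even, so $\sum_{k=1}^\eta \alpha_k(p)^r \ge 0$ for all even $r$; consequently any offending exponent $r_0$ must be odd. For odd $r_0$, however, $\alpha_k(p_0)^{r_0} = \alpha_k(p_0)$, so Remark \ref{rem:an} gives $\sum_{k=1}^\eta \alpha_k(p_0)^{r_0} = \sum_{k=1}^\eta \alpha_k(p_0) = a(p_0)$. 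Thus such a pair forces $a(p_0) < 0$, contradicting the first condition, and $\widehat{I\!D^0}$ is empty.

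With $\widehat{I\!D^0}$ empty, the equivalence in the proposition is read directly off part (I) of Theorem \ref{th:cla1}: one has $f_{\vsig} \in \widehat{I\!D}$ iff $\sum_{k=1}^\eta \alpha_k(p)^r \ge 0$ for all $r \in \N$ and $p \in \Prime$, whence $f_{\vsig} \in \widehat{N\!D} = F_E \setminus \widehat{I\!D}$ iff the opposite inequality $\sum_{k=1}^\eta \alpha_k(p_0)^{r_0} < 0$ holds for some pair $(p_0,r_0)$. No genuine obstacle is anticipated: the entire argument rests on the elementary fact that powers of elements of $\{-1,0,1\}$ are nonnegative for even exponents and equal the base for odd exponents, which is precisely what shortcuts the Kronecker-theorem computation used in the original \cite[Theorem 3.1]{ANPE}.
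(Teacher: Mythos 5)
Your proposal is correct and follows essentially the same route as the paper: both reduce the claim to showing $\widehat{I\!D^0}=\emptyset$ via the decomposition of Theorem \ref{th:cla1}, and both use the parity observation that for $\alpha_k(p)\in\{-1,0,1\}$ even powers give nonnegative sums while odd powers reproduce $\sum_k\alpha_k(p_0)=a(p_0)$ (Remark \ref{rem:an}), forcing $a(p_0)<0$ and hence membership in $\widehat{N\!D}$. No gaps.
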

\begin{proof}
We only have to show $\widehat{I\!D^0} = \emptyset$ in this case. Suppose that $f_{\vsig} \not \in \widehat{I\!D}$, namely there exist $r_0 \in {\mathbb{N}}$ and $p_0 \in\Prime$ such that $\sum_{k=1}^\eta \alpha_k(p_0)^{r_0} <0$. By the assumption $\alpha_k(p) \in \{ 0,1,-1\}$, we have $\sum_{k=1}^\eta \alpha_k(p_0)^{2j} \ge 0$ and $\sum_{k=1}^\eta \alpha_k(p_0) = \sum_{k=1}^\eta \alpha_k(p_0)^{2j-1}$ for any $j \in {\mathbb{N}}$ (see also \cite[Lemma 3.2]{ANPE}). Hence we can assume that $\sum_{k=1}^\eta \alpha_k(p_0) <0$. On the other hand, we have $a(p) = \sum_{k=1}^\eta \alpha_k(p)$ from Remark \ref{rem:an}. Therefore, by using Theorem \ref{th:d1}, we obtain $f_{\vsig} \in \widehat{N\!D}$ since we have $a(p_0)<0$ if $\sum_{k=1}^\eta \alpha_k(p_0) <0$. Hence one has $f_{\vsig} \in \widehat{N\!D}$ when $f_{\vsig} \not \in \widehat{I\!D}$, equivalently $f_{\vsig} \in \widehat{I\!D^0} \biguplus \widehat{N\!D}$, under the assumption $\alpha_k(p) \in \{ 0,1,-1\}$. 
\end{proof}

\begin{example}\label{ex:eta1}
When $\eta=1$, one has $\widehat{I\!D^0} = \emptyset$. This is proved as follows. Suppose there exist $r_0 \in {\mathbb{N}}$ and $p_0 \in\Prime$ such that $\alpha(p_0)^{r_0} <0$. By Lemma \ref{lem:cla1}, $\alpha(p_0)^r$ is real for any $r$. Hence we have $\alpha(p_0)$ is real and $\alpha(p_0)^{2j}\ge 0$ for any $j\in{\mathbb{N}}$. Thus we can assume that $\alpha(p_0)<0$. On the other hand, One has $a(p)=\alpha(p)<0$ by Remark \ref{rem:an}. Therefore, we have $\widehat{I\!D^0} = \emptyset$ by using Theorem \ref{th:cla1}.
\end{example}

\begin{theorem}\label{th:emp1}
One has $\widehat{I\!D^0} = \emptyset$ if and only if $\eta=1,2$. 
\end{theorem}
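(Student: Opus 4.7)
The plan is to settle the three regimes $\eta = 1$, $\eta = 2$, and $\eta \geq 3$ separately. Example \ref{ex:eta1} already gives $\widehat{I\!D^0} = \emptyset$ when $\eta = 1$, and Example \ref{exa:q} produces an explicit member of $\widehat{I\!D^0}$ when $\eta = 3$. For $\eta > 3$ I would adjoin trivial factors by setting $\alpha_k(p) = 0$ for every $p \in \Prime$ and every $k = 4, \ldots, \eta$; this leaves both the Dirichlet coefficients $a(n)$ and every power sum $\sum_{k=1}^{\eta} \alpha_k(p)^r$ unchanged, so the Euler product of Example \ref{exa:q} remains in $\widehat{I\!D^0}$. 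Hence only the case $\eta = 2$ requires a new argument.

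Assume $\eta = 2$ and take any $f_{\vsig} \in F_E$ with $a(n) \geq 0$ for every $n \in \N$. The goal is $\alpha_1(p)^r + \alpha_2(p)^r \geq 0$ for every $r \in \N$ and every $p \in \Prime$, since then Theorem \ref{th:cla1}(I) puts $f_{\vsig}$ in $\widehat{I\!D}$ rather than $\widehat{I\!D^0}$. Fix $p$ and set $\alpha := \alpha_1(p)$, $\beta := \alpha_2(p)$. By Lemma \ref{lem:cla1}, both $\alpha + \beta$ and $\alpha^2 + \beta^2$ are real, so $\alpha\beta$ is real; hence $\alpha$ and $\beta$ are either both real or a non-real complex conjugate pair. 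Moreover, by Lemma \ref{lem:mdst}, $a(p^r) = h_r(\alpha, \beta) := \sum_{j=0}^{r} \alpha^j \beta^{r-j}$.

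In the real subcase, $a(p) = \alpha + \beta \geq 0$ rules out both being negative. If both are non-negative, $\alpha^r + \beta^r \geq 0$ is immediate. If they have opposite signs, say $\alpha \geq 0 > \beta$, then for odd $r$ the identity $h_r = (\alpha^{r+1} - \beta^{r+1})/(\alpha - \beta)$ has sign equal to that of $\alpha^{r+1} - |\beta|^{r+1}$, so $h_r \geq 0$ forces $\alpha \geq |\beta|$; consequently $\alpha^r + \beta^r \geq 0$ for every $r$ (a sum of non-negatives when $r$ is even, and $\alpha^r - |\beta|^r \geq 0$ when $r$ is odd). The degenerate case $\alpha = \beta$ reduces to $h_r = (r+1)\alpha^r \geq 0$, forcing $\alpha \geq 0$ and hence $\alpha^r + \beta^r = 2\alpha^r \geq 0$. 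In the non-real conjugate subcase, write $\alpha = \rho e^{{\rm{i}}\theta}$ with $\rho > 0$ and $\theta \in (0, \pi)$; a direct telescoping gives the closed form $h_r = \rho^r \sin((r+1)\theta) / \sin \theta$. The constraint $h_r \geq 0$ for every $r$ then requires $k\theta \bmod 2\pi \in [0, \pi]$ for every $k \geq 1$. An Archimedean argument rules this out: letting $k_0 \geq 1$ be the least index with $k_0 \theta > \pi$, one has $k_0 \theta \leq (k_0 - 1)\theta + \theta \leq \pi + \theta < 2\pi$, so $k_0 \theta \in (\pi, 2\pi)$ and $\sin(k_0 \theta) < 0$, contradicting the hypothesis. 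Hence the non-real subcase never occurs.

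The main obstacle is the non-real conjugate subcase for $\eta = 2$, where the closed form $h_r = \rho^r \sin((r+1)\theta)/\sin\theta$ together with the angle-sweeping argument is what prevents any non-real $\alpha$ from surviving the positivity constraints on $a(n)$. The real subcase is more elementary but still requires careful parity bookkeeping when the two roots have opposite signs.
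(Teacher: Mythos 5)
Your proof is correct and follows the same overall skeleton as the paper's: Example \ref{ex:eta1} for $\eta=1$, Example \ref{exa:q} for $\eta=3$, an extension device for $\eta\ge 4$, and a real-versus-conjugate case analysis with the sine formula $a(p^r)=\rho^r\sin((r+1)\theta)/\sin\theta$ for $\eta=2$. Two of your execution choices, however, genuinely differ and are worth noting. For $\eta\ge 4$ the paper appends $\eta-3$ extra factors with coefficients $1/\eta$ at $p=2$ and must then re-verify the power sums and argue that the resulting product of characteristic functions is again a characteristic function; your device of padding with $\alpha_k(p)\equiv 0$ leaves the Euler product, the coefficients $a(n)$, and every power sum literally unchanged, so membership in $\widehat{I\!D^0}$ transfers with no further work. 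For $\eta=2$ the paper splits into three cases and, in the case where $\alpha_1(p),\alpha_2(p)$ are both non-real with real sum, immediately writes them as $R_pe^{\pm{\rm i}\theta_p}$ --- a step that does not follow from the real sum alone; your derivation closes this gap by invoking Lemma \ref{lem:cla1} to get both $\alpha+\beta$ and $\alpha^2+\beta^2$ real, hence $\alpha\beta$ real, hence a real quadratic whose roots are either both real or genuine conjugates. Your explicit Archimedean argument for producing $k_0$ with $\sin(k_0\theta)<0$ (where the paper only asserts the existence of a suitable $j_0$) is likewise a welcome piece of added rigor. Finally, your argument runs in the direction ``$a(n)\ge 0$ for all $n$ implies all power sums are nonnegative,'' whereas the paper shows ``$f_{\vsig}\notin\widehat{I\!D}$ implies $f_{\vsig}\in\widehat{N\!D}$''; by Theorem \ref{th:cla1} these are equivalent routes to $\widehat{I\!D^0}=\emptyset$.
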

\begin{proof}
When $\eta=3$, we have $\widehat{I\!D^0} \ne \emptyset$ from Example \ref{exa:q}. Suppose $\eta \ge 4$, $d= \vc =1$, $\alpha_1(2)=1$, $\alpha_2(2)=-\alpha_3(2)={\rm{i}}$, $\alpha_{4} (2) = \cdots = \alpha_\eta (2) =1/\eta$, and $\alpha_1(p) = \cdots = \alpha_\eta(p) = 0$ for any $p>3$. Then it holds that
\begin{equation*}
\begin{split}
&\sum_{k=1}^\eta \alpha_k (2)^{4j-3}= 1 + \frac{\eta-3}{\eta^{4j-3}}, \qquad 
\sum_{k=1}^\eta \alpha_k (2)^{4j-2}=-1 + \frac{\eta-3}{\eta^{4j-2}}, \\
&\sum_{k=1}^\eta \alpha_k (2)^{4j-1}=1 + \frac{\eta-3}{\eta^{4j-1}}, \qquad 
\sum_{k=1}^\eta \alpha_k (2)^{4j} = 3  + \frac{\eta-3}{\eta^{4j}}
\end{split}
\end{equation*}
for any $j \in {\mathbb{N}}$. Hence there exists $j \in {\mathbb{N}}$ such that $\sum_{k=1}^\eta \alpha_k (2)^{4j-2} < 0$. Moreover, the function
$$
F_\eta (\sigma,t) := \frac{Z_Q(\sigma+{\rm{i}}t)}{Z_Q(\sigma)}
\frac{(1-\eta^{-1} 2^{-\sigma})^{\eta-3}}{(1-\eta^{-1} 2^{-\sigma-{\rm{i}}t})^{\eta-3}} 
$$
is a characteristic function by Example \ref{exa:q} and the fact that the product of a finite number of characteristic functions is also a characteristic function. Thus we have $\widehat{I\!D^0} \ne \emptyset$ when $\eta \ge 3$. Hence we only have to show $\widehat{I\!D^0} \ne \emptyset$ when $\eta=2$ by Example \ref{ex:eta1}.

First suppose $\alpha_1 (p)$ and $\alpha_2 (p)$ are real for all $p \in {\mathbb{P}}$. Then we can see that $\widehat{I\!D^0} \ne \emptyset$ by Lemma \ref{lem:eta2real} below. Next assume $\alpha_1 (p_0) + \alpha_2 (p_0) \in {\mathbb{C}} \setminus {\mathbb{R}}$ for some $p_0 \in {\mathbb{P}}$. Then we have $f_{\vsig} \not \in \widehat{I\!D}$ from Theorem \ref{th:cla1} (I). Furthermore, one has
$$
a(p_0) = \alpha_1 (p_0) + \alpha_2 (p_0) \in {\mathbb{C}} \setminus {\mathbb{R}}.
$$
Thus we also obtain $f_{\vsig} \in \widehat{N\!D}$ in this case. Finally suppose $\alpha_1 (p), \alpha_2 (p) \in {\mathbb{C}} \setminus {\mathbb{R}}$ and $\alpha_1 (p) + \alpha_2 (p) \in {\mathbb{R}}$ for all $p \in {\mathbb{P}}$. Then we can put $\alpha_1(p) := R_p e^{{\rm{i}}\theta_p}$ and $\alpha_2(p) := R_p e^{{-\rm{i}}\theta_p}$, where $R_p, \theta_p>0$. In this case, for each $p \in {\mathbb{P}}$, there exists $r_0 \in {\mathbb{N}}$ such that 
$$
2\alpha_1(p)^{r_0} + 2\alpha_2(p)^{r_0} = 2R_p^{r_0} \cos (r_0\theta_p) <0.
$$
Hence we have $f_{\vsig} \not \in \widehat{I\!D}$ from Theorem \ref{th:cla1} (I). Moreover, there is $j_0 \in {\mathbb{N}}$ which satisfies $\sin (\theta_p) \sin (j_0\theta_p) <0$. Then one has
$$
a(p^{j_0-1}) =\sum_{r=0}^{j_0-1} \alpha_1(p)^r \alpha_2(p)^{j_0-1-r} = 
\frac{\alpha_1(p)^{j_0} - \alpha_2(p)^{j_0}}{\alpha_1(p) - \alpha_2(p)} 
= \frac{R_p^{j_0}\sin (j_0\theta_p)}{R_p \sin (\theta_p)} < 0.
$$
Thus we also obtain $f_{\vsig} \in \widehat{N\!D}$ in this case. Therefore, when $\eta=2$, $f_{\vsig} \not \in \widehat{I\!D}$ implies $f_{\vsig} \in \widehat{N\!D}$.
\end{proof}

\begin{lemma}\label{lem:eta2real}
Let $\eta =2$, $\alpha_1 (p)$ and $\alpha_2 (p)$ be real for all $p \in {\mathbb{P}}$. Then $f_{\vsig} \in \widehat{I\!D}$ if and only if $\alpha_1 (p)+\alpha_2 (p) \ge 0$ for any $p \in {\mathbb{P}}$, and $f_{\vsig} \in \widehat{N\!D}$ if and only if $\alpha_1 (p_0)+\alpha_2 (p_0)<0$ for some $p_0 \in {\mathbb{P}}$. 
\end{lemma}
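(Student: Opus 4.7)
The plan is to combine Theorem~\ref{th:cla1} with a simple algebraic observation about sums of two real $r$-th powers. The whole argument reduces to analysing the sign of $\alpha_1(p)^r + \alpha_2(p)^r$ separately in $p$, which is possible because Theorem~\ref{th:cla1}(I) gives a prime-by-prime criterion.

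First I would handle the $\widehat{I\!D}$ equivalence. Necessity is immediate from Theorem~\ref{th:cla1}(I) with $r=1$: if $f_{\vsig}\in\widehat{I\!D}$ then $\alpha_1(p)+\alpha_2(p)\ge 0$ at every prime. For sufficiency, fix $p\in\Prime$ with $\alpha_1(p)+\alpha_2(p)\ge 0$ and drop $p$ from the notation. When $r$ is even, $\alpha_1^r+\alpha_2^r\ge 0$ is automatic because $\alpha_1,\alpha_2\in\R$. When $r$ is odd, I will use the following claim: if $\alpha_1,\alpha_2\in[-1,1]$ and $\alpha_1+\alpha_2\ge 0$, then $\max(\alpha_1,\alpha_2)\ge |\min(\alpha_1,\alpha_2)|$. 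Indeed, relabelling so that $\alpha_1\ge\alpha_2$, one has $\alpha_1\ge(\alpha_1+\alpha_2)/2\ge 0$, and $\alpha_2\ge -\alpha_1$ forces $|\alpha_2|\le\alpha_1$. Consequently $\alpha_1^r\ge |\alpha_2|^r\ge -\alpha_2^r$, so $\alpha_1^r+\alpha_2^r\ge 0$ for every $r\in\N$. Applying Theorem~\ref{th:cla1}(I) gives $f_{\vsig}\in\widehat{I\!D}$.

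Next I would handle the $\widehat{N\!D}$ equivalence. If $\alpha_1(p_0)+\alpha_2(p_0)<0$ for some $p_0\in\Prime$, then by Remark~\ref{rem:an} the Dirichlet coefficient satisfies
\begin{equation*}
a(p_0)=\sum_{k=1}^{2}\alpha_k(p_0)=\alpha_1(p_0)+\alpha_2(p_0)<0,
\end{equation*}
so $a(p_0)\in\C\setminus\R_{\ge 0}$ and Theorem~\ref{th:cla1}(III) yields $f_{\vsig}\in\widehat{N\!D}$. Conversely, if $\alpha_1(p)+\alpha_2(p)\ge 0$ for every $p$, then the first part of the proof shows $f_{\vsig}\in\widehat{I\!D}\subset F_E\setminus\widehat{N\!D}$ by the disjoint decomposition in Theorem~\ref{th:cla1}(IV).

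The only nonroutine step is the two-variable real inequality $\alpha_1^r+\alpha_2^r\ge 0$ for all $r$; everything else is a direct application of the classification theorem together with Remark~\ref{rem:an}. I do not expect a genuine obstacle here, since the hypothesis $|\alpha_k|\le 1$ and the reality of the $\alpha_k$'s make the analysis on each prime elementary.
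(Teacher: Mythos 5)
Your proposal is correct and follows essentially the same route as the paper: both reduce to verifying $\alpha_1(p)^r+\alpha_2(p)^r\ge 0$ for all $r$ when $\alpha_1(p)+\alpha_2(p)\ge 0$ (via the same observation that the larger coefficient dominates the absolute value of the smaller), and both use $a(p_0)=\alpha_1(p_0)+\alpha_2(p_0)<0$ together with parts (I) and (III) of Theorem~\ref{th:cla1} for the remaining direction. Your write-up is in fact slightly more explicit than the paper's about why the two equivalences combine via the disjoint decomposition in part (IV).
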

\begin{proof}
First sup pose $\alpha_1 (p)+\alpha_2 (p) \ge 0$ for any $p \in {\mathbb{P}}$. Then we have $\alpha_1 (p)^r + \alpha_2 (p)^r \ge 0$ for all $r \in {\mathbb{N}}$. It is proved as follows. When $\alpha_1 (p), \alpha_2 (p) \ge 0$, obviously we have $\alpha_1 (p)^r + \alpha_2 (p)^r \ge 0$. Thus we can assume $\alpha_1(p) \ge 0 \ge \alpha_2 (p)$ and $\alpha_1(p) \ge |\alpha_2 (p)|$.  In this case, it holds that
$$
\alpha_1 (p)^r + \alpha_2 (p)^r \ge \alpha_1 (p)^r - \bigl|\alpha_2 (p)^r\bigr| \ge 0.
$$
Next suppose $\alpha_1 (p_0)+\alpha_2 (p_0)<0$ for some $p_0 \in {\mathbb{P}}$. Then we have
$$
a (p_0) = \alpha_1 (p_0)+\alpha_2 (p_0)<0.
$$
Hence we obtain this lemma from (I) and (III) of Theorem \ref{th:cla1}.
\end{proof}

\section{Main results}
Now we define compound Poisson zeta distributions on $\rd$ generated by the multidimensional polynomial Euler products introduced in Section 2. In this section, we consider the multidimensional polynomial Euler product
\begin{equation}
Z_E(\vs) = \prod_p \prod_{l=1}^\varphi \prod_{k=1}^\eta \biggl( 1 - \frac{\alpha_{lk}(p)}{p^{\langle \vc_l,\vs\rangle}} \biggr)^{-1} ,
\label{eq:defpe4}
\end{equation}
where $\min_{1\le l\le \varphi} \Re \langle \vc_l,\vs\rangle >1$, $\alpha_{lk}(p) \in {\mathbb{C}}$ and $|\alpha_{lk}(p)|\le 1$ for $1 \le k \le \eta$ and $1 \le l \le \varphi$. Moreover, we always suppose the following condition (A1) or (A2);
\begin{description}
\item[(A1)] The pair of vectors $\vc_l$, $1 \le l \le \varphi$ are linearly independent.
\item[(A2)] $\vc_l = \gamma_l \vc$, where $1=\gamma_1, \gamma_2, \ldots , \gamma_\varphi$ are algebraic real numbers which are linearly independent over the rationals. 
\end{description}
except for Section 4.4. It is called that real numbers $\theta_1,\ldots,\theta_n$ are linearly independent over the rationals if $\sum_{k=1}^n \gamma_k \theta_k=0$ with rational multipliers $\gamma_1,\ldots,\gamma_n$ implies $\gamma_1 = \cdots = \gamma_n =0$. By Lemma \ref{lem:mdst}, the function $Z_E (\vs)$ is also written by 
\begin{equation}
Z_E (\vs)=
\prod_{l=1}^\varphi \sum_{n_l=1}^{\infty} \frac{a_l(n_l)}{n_l^{\langle \vc_l,\vs\rangle}} =
\sum_{n_1, \ldots ,n_\varphi=1}^\infty
\frac{a_1(n_1)}{n_1^{\langle \vc_1,\vs\rangle}} \cdots \frac{a_\varphi(n_\varphi)}{n_\varphi^{\langle \vc_\varphi,\vs\rangle}},
\label{eq:defsz4}
\end{equation}
where $a_l(n)$ is multiplicative and expressed as (\ref{eq:an}). It should be noted that the multiple series above convergent absolutely when $\min_{1\le l\le \varphi} \Re \langle \vc_l,\vs\rangle >1$ since we have $a_l(n) = O(n^{\varepsilon})$ by Lemma \ref{lem:mdst}. Furthermore, for $\vsig$ satisfying $\min_{1\le l\le \varphi} \Re \langle \vc,\vsig_l \rangle >1$, we define a normalized function $f_{\vsig}\left(\vt\,\right)$ by (\ref{eq:nor1}).

We quote Baker's theorem which is very famous in transcendental number theory. This theorem plays import role in this section (see the proofs of Theorem \ref{th:idm1} and Lemma \ref{lem:m2}). 
\begin{proposition}[see {\cite[Theorem 2.4]{Baker}}]
The numbers $\gamma_1^{\beta_1} \cdots \gamma_n ^{\beta_n}$ are transcendental for any algebraic numbers $\gamma_1 , \ldots , \gamma_n$, other than $0$ or $1$, and any algebraic numbers $\beta_1, \ldots , \beta_n$ with $1,\beta_1, \ldots , \beta_n$ are linearly independent over the rationals.
\label{lem:baker}
\end{proposition}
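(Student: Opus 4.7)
Strictly speaking, the statement to be ``proved'' is Baker's theorem, and the paper only quotes it from \cite{Baker} rather than reproving it. A genuine proof would follow the transcendence-theoretic argument Baker developed in 1966--67, which I outline below.

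The plan is to reduce the statement to the theorem on linear forms in logarithms and then establish that via a Gelfond--Schneider style construction enhanced by Baker's crucial extrapolation. Argue by contradiction: suppose $\Lambda := \gamma_1^{\beta_1}\cdots\gamma_n^{\beta_n}$ is algebraic. Choosing a branch of $\log$, one obtains the relation
\[
\beta_1 \log \gamma_1 + \cdots + \beta_n \log \gamma_n - \log \Lambda = 0.
\]
Because $1,\beta_1,\ldots,\beta_n$ are $\mathbb{Q}$-linearly independent, this is a \emph{nontrivial} linear combination, with algebraic coefficients, of logarithms of algebraic numbers. The statement of the proposition thus reduces to showing that such a combination cannot vanish, which is exactly Baker's theorem on linear forms in logarithms.

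To prove the linear-forms theorem, the first step is to introduce an auxiliary polynomial-exponential function of the shape
\[
\Phi(z) = \sum_{\lambda_0,\lambda_1,\ldots,\lambda_n} p(\lambda_0,\lambda_1,\ldots,\lambda_n)\, z^{\lambda_0}\, \gamma_1^{\lambda_1 z}\cdots \gamma_n^{\lambda_n z},
\]
and invoke Siegel's lemma in an appropriate number field to pick rational integer coefficients $p(\lambda)$, not all zero and of controlled height, so that $\Phi$ and many of its derivatives vanish at every integer in some initial box. The assumed vanishing of the linear form enters here: it lets one eliminate one exponential in favour of the others, creating the redundancy that makes the underlying linear system solvable.

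The second step is Baker's extrapolation. Using Schwarz' lemma on discs of growing radius, the smallness of $\Phi$ on the initial box forces $\Phi$ and its low-order derivatives to vanish on a much larger box of integers; iterate. Balancing the number of iterations against the growth of $|\Phi|$ and against the Liouville-type lower bound for a nonzero algebraic number of known height and degree eventually forces $\Phi$ to vanish identically, contradicting the non-triviality of the coefficients produced by Siegel's lemma. The main obstacle — and what separates Baker's theorem from the classical Gelfond--Schneider setting with two logarithms — is the delicate calibration of the parameters (number and order of derivatives, initial box size, number of extrapolation rounds) so that the gain from extrapolation outpaces the cost of the enlarged box. This bookkeeping is the technical core of the proof and the reason the result required a new idea beyond the pre-1966 state of the art.
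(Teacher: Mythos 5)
You have correctly identified that the paper does not prove this proposition at all: it is imported verbatim as Theorem 2.4 of Baker's book and used as a black box (in the proofs of Theorem \ref{th:idm1} and Lemma \ref{lem:m2}). So there is no in-paper argument to compare against, and the honest answer — which you give — is that any ``proof'' here means reproducing Baker's transcendence argument. Your outline of that argument (reduction to a vanishing linear form in logarithms, auxiliary function with coefficients from Siegel's lemma, Baker's multi-variable extrapolation via Schwarz's lemma against a Liouville-type lower bound) is an accurate high-level description of the method, and you correctly locate the novelty in the extrapolation step and the parameter calibration.

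Two caveats keep this from being a proof rather than a summary. First, the reduction step is glossed: from the assumed algebraicity of $\Lambda=\gamma_1^{\beta_1}\cdots\gamma_n^{\beta_n}$ you obtain $\beta_1\log\gamma_1+\cdots+\beta_n\log\gamma_n-\log\Lambda=0$, but to contradict the linear-forms theorem (Baker's Theorem 2.1, which concerns logarithms that are themselves $\mathbb{Q}$-linearly independent) one must first handle possible $\mathbb{Q}$-linear relations among $\log\gamma_1,\ldots,\log\gamma_n,\log\Lambda$; the standard deduction runs an induction on $n$, eliminating dependent logarithms and using the $\mathbb{Q}$-linear independence of $1,\beta_1,\ldots,\beta_n$ to show the resulting shorter form is still nontrivial. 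Your sentence ``this is a nontrivial linear combination'' elides exactly this point. Second, the analytic core (the vanishing estimates, the choice of the box sizes, the zero estimate at the end) is only named, not carried out. For the purposes of this paper the right move is what the author did — cite \cite[Theorem 2.4]{Baker} — and your text should be read as a correct annotation of what that citation hides rather than as a replacement for it.
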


\subsection{Infinitely divisible or not}
We have the following.
\begin{theorem}\label{th:idm1}
Let $\alpha_{lk}(p) \in {\mathbb{C}}$, $|\alpha_{lk}(p)|\le 1$ for any $p\in\Prime$, $1 \le k \le \eta$, $1 \le l \le \varphi$ in (\ref{eq:defpe4}). Then $f_{\vsig}$ is an infinitely divisible characteristic function if and only if $\sum_{k=1}^\eta \alpha_k(p)^r \ge 0$ for all $r \in {\mathbb{N}}$, $p\in\Prime$ and $1 \le l \le \varphi$.
Moreover, when $\sum_{k=1}^\eta \alpha_{lk}(p)^r \ge 0$ for all $r \in {\mathbb{N}}$, $p\in\Prime$ and $1 \le l \le \varphi$, the normalized function $f_{\vsig}$ is a compound Poisson characteristic function with its finite L\'evy measure $N_{\vsig}^Z$ on $\rd$ given by 
\begin{equation}\label{eq:lmm1}
N_{\vsig}^Z (dx) = \sum_p \sum_{r=1}^{\infty} \sum_{l=1}^\varphi \sum_{k=1}^\eta \frac{1}{r}
\alpha_{lk}(p)^r p^{-r\langle\vc,\vsig\rangle} \delta_{\log p^r \vc_l} (dx).
\end{equation}
\end{theorem}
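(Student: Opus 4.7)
The plan is to parallel the proof of Theorem \ref{th:id1}: identify the natural candidate L\'evy measure $N_{\vsig}^Z$ of \eqref{eq:lmm1}, show it is a complex measure of finite total variation, and conclude by the uniqueness of the L\'evy--Khintchine triplet. The main new obstacle, compared with the one-dimensional case, is to guarantee that the atoms $\log p^r\vc_l$ remain pairwise distinct in $\rd$; under (A2) this is exactly where Baker's theorem (Proposition \ref{lem:baker}) enters.

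Starting from \eqref{eq:9.19}, a direct computation gives
\begin{equation*}
\log f_{\vsig}(\vt)
= \sum_p\sum_{r=1}^{\infty}\sum_{l=1}^{\varphi}\sum_{k=1}^{\eta}\frac{\alpha_{lk}(p)^r p^{-r\langle\vc_l,\vsig\rangle}}{r}\bigl(e^{{\rm i}\langle\vt,\log p^r\vc_l\rangle}-1\bigr)
= \int_{\rd}\bigl(e^{{\rm i}\langle\vt,x\rangle}-1\bigr)\,N_{\vsig}^Z(dx),
\end{equation*}
and the bound in Lemma \ref{lem:lm1}, with the sum over $l$ contributing an extra factor $\varphi$, yields $|N_{\vsig}^Z|(\rd)\le 2\varphi\eta\,\zeta(v)<\infty$ where $v:=\min_l\langle\vc_l,\vsig\rangle>1$. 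If every $\sum_{k=1}^{\eta}\alpha_{lk}(p)^r\ge 0$, then each atomic weight of $N_{\vsig}^Z$ is a nonnegative real, so $N_{\vsig}^Z$ is a finite positive measure with $N_{\vsig}^Z(\{0\})=0$, and Proposition \ref{pro:LK2} puts $f_{\vsig}$ in the compound Poisson form \eqref{CPcf}, proving sufficiency.

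For necessity, assume $\sum_{k=1}^{\eta}\alpha_{l_0k}(p_0)^{r_0}\in\mathbb{C}\setminus[0,\infty)$ for some $(p_0,r_0,l_0)$. I claim the atoms $\log p^r\vc_l$, as $(p,r,l)$ ranges over $\Prime\times\N\times\{1,\dots,\varphi\}$, are pairwise distinct in $\rd$; granting this, the weight of $N_{\vsig}^Z$ at $\log p_0^{r_0}\vc_{l_0}$ is precisely $\tfrac{1}{r_0}\bigl(\sum_{k=1}^{\eta}\alpha_{l_0k}(p_0)^{r_0}\bigr)p_0^{-r_0\langle\vc_{l_0},\vsig\rangle}\in\mathbb{C}\setminus[0,\infty)$, so $N_{\vsig}^Z$ is a genuinely complex, non-positive measure, and uniqueness of the L\'evy--Khintchine triplet in Proposition \ref{pro:LK1}(ii) rules out $f_{\vsig}\in I(\rd)$. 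To prove the claim, suppose $\log p_1^{r_1}\vc_{l_1}=\log p_2^{r_2}\vc_{l_2}$. Under (A1), linear independence of $\vc_{l_1},\vc_{l_2}$ forces $l_1=l_2$ (else both scalar factors vanish, impossible for primes), and \eqref{eq:fta} then gives $(p_1,r_1)=(p_2,r_2)$. Under (A2) with $\vc_l=\gamma_l\vc$, the equation reduces to $p_1^{r_1\gamma_{l_1}}=p_2^{r_2\gamma_{l_2}}$: if $p_1=p_2$, then $r_1\gamma_{l_1}=r_2\gamma_{l_2}$ with $r_1,r_2\in\Q_{>0}$, and the $\Q$-linear independence of $\{1,\gamma_1,\dots,\gamma_\varphi\}$ forces $l_1=l_2$ and hence $r_1=r_2$; if $p_1\ne p_2$, then $l_1\ne l_2$ (else \eqref{eq:fta} forces $p_1=p_2$), and a suitable application of Baker's theorem --- using that $\{1,\gamma_{l_1},\gamma_{l_2}\}$ is $\Q$-linearly independent together with $r_1,r_2\ne 0$ --- shows $p_1^{r_1\gamma_{l_1}}p_2^{-r_2\gamma_{l_2}}$ is transcendental, contradicting equality with $1$.
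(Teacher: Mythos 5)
Your proposal is correct and follows essentially the same route as the paper: the same candidate measure $N_{\vsig}^Z$, the same total-variation bound $2\varphi\eta\,\zeta(v)$, sufficiency from positivity of the atomic weights, and necessity from pairwise distinctness of the atoms via the fundamental theorem of arithmetic together with Baker's theorem and the uniqueness of the (quasi-)L\'evy triplet. The one point to tidy is your sub-case $p_1\ne p_2$, $l_1\ne l_2$ under (A2): if $l_1=1$ then $\gamma_{l_1}=1$ and $\{1,\gamma_{l_1},\gamma_{l_2}\}$ is not a three-element $\Q$-linearly independent set, so Proposition \ref{lem:baker} with $n=2$ does not apply as you state it; one should instead rewrite the equation as $p_2^{r_2\gamma_{l_2}}=p_1^{r_1}\in\N$ and use the $n=1$ case, which is in effect what the paper does by dividing through by $\gamma_{l_2}$ and applying Baker to the single irrational algebraic exponent $\gamma_{l_1}/\gamma_{l_2}$.
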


To prove the theorem above, we show the following lemma. 
\begin{lemma}\label{lem:lmm1}
Let $\alpha_{lk}(p) \in {\mathbb{C}}$, $|\alpha_{lk}(p)|\le 1$ for any $p\in\Prime$, $1 \le k \le \eta$ and $1 \le l \le \varphi$. Then $N_{\vsig}^Z$ is a complex measure on $\rd$ with total variation measure $|N_{\vsig}^Z|$ satisfying $N_{\vsig}^Z(\{0\}) =0$ and $\int_{\rd} (|x| \wedge 1) |N_{\vsig}^Z|(dx) < \infty$.
\end{lemma}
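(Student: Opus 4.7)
The plan is to mirror the proof of Lemma \ref{lem:lm1}, just carrying along the extra sum over $l$ and passing to the total variation measure. First I would verify $N_{\vsig}^Z(\{0\})=0$: since each $\vc_l$ is a non-zero vector in ${\mathbb{R}}^d$ (Definition \ref{def:EP}) and $\log p^r>0$ for every prime $p$ and every $r\in\N$, each atom of $N_{\vsig}^Z$ is located at a non-zero point $\log p^r\vc_l\in{\mathbb{R}}^d\setminus\{0\}$, so the origin receives zero mass regardless of what the (possibly complex) weights $r^{-1}\alpha_{lk}(p)^r p^{-r\langle\vc_l,\vsig\rangle}$ are.

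Next I would bound the total variation. By the triangle inequality applied to the atomic representation \eqref{eq:lmm1},
\begin{equation*}
|N_{\vsig}^Z|({\mathbb{R}}^d)\ \le\ \sum_p\sum_{r=1}^\infty\sum_{l=1}^\varphi\sum_{k=1}^\eta \frac{1}{r}|\alpha_{lk}(p)|^r p^{-r\langle\vc_l,\vsig\rangle}.
\end{equation*}
Using the standing hypothesis $|\alpha_{lk}(p)|\le 1$ and putting $v:=\min_{1\le l\le\varphi}\langle\vc_l,\vsig\rangle>1$, this is at most
\begin{equation*}
\varphi\eta\sum_p\sum_{r=1}^\infty p^{-rv}\ \le\ \varphi\eta\sum_{n=2}^\infty\frac{n^{-v}}{1-n^{-v}}\ \le\ 2\varphi\eta\,\zeta(v)<\infty,
\end{equation*}
exactly as in the proof of Lemma \ref{lem:lm1}, the only change being the extra factor $\varphi$ absorbed into the constant. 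So $|N_{\vsig}^Z|$ is a finite (nonnegative) Borel measure on ${\mathbb{R}}^d$, which in particular means $N_{\vsig}^Z$ is a well-defined complex measure.

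Finally, the integrability condition is immediate: since $|x|\wedge 1\le 1$ pointwise,
\begin{equation*}
\int_{{\mathbb{R}}^d}(|x|\wedge 1)\,|N_{\vsig}^Z|(dx)\ \le\ |N_{\vsig}^Z|({\mathbb{R}}^d)<\infty.
\end{equation*}
There is no real obstacle here; the only point of care is that we must pass from the signed/complex atomic sum in \eqref{eq:lmm1} to its total variation before estimating, which is justified because the series of absolute values converges absolutely (the calculation above) and therefore defines a bona fide finite measure that dominates $N_{\vsig}^Z$. The extra sum over $l$, compared with the one-dimensional case, contributes only a harmless factor $\varphi$ because the assumption $\langle\vc_l,\vsig\rangle>1$ holds uniformly in $l$.
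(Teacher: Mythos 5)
Your proposal is correct and follows essentially the same route as the paper: the same bound $|N_{\vsig}^Z|(\R^d)\le \varphi\eta\sum_p\sum_{r\ge 1}p^{-rv}\le 2\varphi\eta\,\zeta(v)<\infty$ with $v=\min_l\langle\vc_l,\vsig\rangle>1$, followed by the trivial domination of $\int(|x|\wedge 1)\,|N_{\vsig}^Z|(dx)$ by the total mass. The only cosmetic difference is that the paper also derives the representation $\log f_{\vsig}(\vt)=\int_{\rd}(e^{-{\rm i}\langle\vt,x\rangle}-1)N_{\vsig}^Z(dx)$ inside this proof (needed later for Theorem \ref{th:idm1}), while you instead spell out the $N_{\vsig}^Z(\{0\})=0$ claim explicitly; both are fine.
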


\begin{proof}
Recall that $\log Z_E (\vs)$ is expressed as (\ref{eq:9.19}) and the normalized function $f_{\vsig}(\vt)$ converges absolutely when $\min_{1\le l\le \varphi} \Re \langle \vc_l,\vs\rangle >1$ from Lemma \ref{lem:EPc}. In the view of the proof of Lemma \ref{lem:lm1}, one has
\begin{equation*}
\begin{split}
&\log f_{\vsig}(\vt)= \log \frac{Z_E (\vsig + {\rm i}\vt)}{Z_E (\vsig)} 
= \sum_p \sum_{r=1}^{\infty} \sum_{l=1}^\varphi \sum_{k=1}^\eta \frac{1}{r} 
\alpha_{lk}(p)^r p^{-r\langle\vc_l,\vsig\rangle} \bigl(p^{-r\langle\vc_l,{\rm i}\vt\rangle} -1\bigr) \\ 
= & \sum_p \sum_{r=1}^{\infty} \sum_{l=1}^\varphi \sum_{k=1}^\eta \frac{1}{r} 
\alpha_{lk}(p)^r p^{-r\langle\vc_l,\vsig\rangle} \bigl(e^{-r\langle\vc_l,{\rm i}\vt\rangle \log p} -1\bigr) 
=  \int_{\rd} (e^{-\langle{\rm i}\vt,x\rangle}-1)N_{\vsig}^Z(dx),
\end{split}
\end{equation*}
where $N_{\vsig}^Z$ is defined by (\ref{eq:lmm1}) since we have $e^{-r\langle\vc_l,{\rm i}\vt\rangle \log p}=\int_{\rd} e^{-\langle{\rm i}\vt,x\rangle} \delta_{\log p^r \vc_l} (dx)$. Now put $v := \min_{1\le l\le \varphi} \Re \langle \vc_l,\vs\rangle >1$. By the similar way used in the proof of Lemma \ref{lem:lm1}, we have
\begin{equation*}
\begin{split}
N_{\vsig}^Z (\rd) \le & \int_{\rd}\sum_p \sum_{r=1}^{\infty} \sum_{l=1}^\varphi \sum_{k=1}^\eta \frac{1}{r}
|\alpha_{lk}(p)|^r p^{-r\langle\vc_l,\vsig\rangle} \delta_{\log p^r \vc_l} (dx) \\
= & \, \varphi \eta \sum_p \sum_{r=1}^{\infty} \frac{1}{r} p^{-rv} 
\le 2 \varphi \eta \zeta (v) <\infty.
\end{split}
\end{equation*}
It is also easy to see that the measure $N_{\vsig}^Z$ satisfies $\int_{|x|<1}|x|N_{\vsig}^Z(dx)\le N_{\vsig}^Z(\rd)<\infty$. 
\end{proof}

\begin{proof}[Proof of Thereom \ref{th:idm1}]
First suppose $\sum_{k=1}^\eta \alpha_{lk}(p)^r \ge 0$ for all $r \in {\mathbb{N}}$, $p\in\Prime$ and $1 \le l \le \varphi$. In this case, $N_{\vsig}^Z$ is a measure on $\rd$ with $N_{\vsig}^Z(\{0\}) =0$ and $\int_{\rd} (|x| \wedge 1) N_{\vsig}^Z(dx) < \infty$ by Lemma \ref{lem:lmm1}. Thus $f_{\vsig}$ is an infinitely divisible characteristic function.

Next suppose that there exist $r_0 \in {\mathbb{N}}$, $p_0 \in\Prime$ and $1 \le l_0 \le \varphi$ such that $\sum_{k=1}^\eta \alpha_{l_0k}(p_0)^{r_0} \in K$, where $K := \{ z \in {\mathbb{C}} : |z| \le \eta, z \not \in [0,\eta] \}$. Let $r_1,r_2 \in {\mathbb{N}}$, $p_1, p_2 \in\Prime$ and $1 \le l_1, l_2 \le \varphi$. Now we will show that
\begin{equation}
\log p_1^{r_1} \vc_{l_1} = \log p_2^{r_2} \vc_{l_2} \quad \mbox{ if and only if } \quad 
r_1=r_2, \mbox{ }  p_1=p_2 \mbox{ and } \vc_{l_1} = \vc_{l_2}.
\label{eq:ftam}
\end{equation}
Under the assumption (A1), we have (\ref{eq:ftam}) by the fundamental theorem of arithmetic. Next assume (A2) and let  $\vc_{l_1} = \gamma_l \vc$ and $\vc_{l_2} = \gamma_2 \vc$, where $\vc$ is a non-zero ${\mathbb{R}}^d$-valued vector, and real numbers $\gamma_{l_1}$ and $\gamma_{l_2}$ are linearly independent over the rationals. In this case, we only have to show $\gamma_{l_1} \log p_1^{r_1} = \gamma_{l_2} \log p_2^{r_2}$ if and only if $r_1=r_2$, $p_1=p_2$ and $\gamma_{l_1}=\gamma_{l_2}$. Put $\gamma = \gamma_{l_1}/ \gamma_{l_2}$. Then $\gamma$ is  a non-rational algebraic real number by the assumption. Then $\gamma_{l_1} \log p_1^{r_1} = \gamma_{l_2} \log p_2^{r_2}$ is equivalent to $\gamma \log p_1^{r_1} = \log p_2^{r_2}$, namely $(p_1^{r_1})^\gamma = p_2^{r_2}$. Obvious, $p_2^{r_2}$ is a natural number. On the other hand, $(p_1^{r_1})^\gamma$ is a transcendental number by Proposition \ref{lem:baker}. Thus we have (\ref{eq:ftam}).

Therefore, one has $\delta_{\log {p_1}^{r_1} \vc_{l_1}} (dx) = \delta_{\log {p_2}^{r_2} \vc_{l_2}} (dx)$ if and only if $r_1=r_2$, $p_1=p_2$ and $\vc_{l_1} = \vc_{l_2}$. Hence the normalized function $f_{\vsig}$ is not an infinitely divisible characteristic function by the (not measure but) complex measure $\sum_{k=1}^\eta \alpha_{l_0k}(p_0)^{r_0} \delta_{\log {p_0}^{r_0} \vc_{l_0}}$. 
\end{proof}

\begin{remark}\label{rem:idm1}
If $\gamma$ in the proof above is rational or transcendental, the statement (\ref{eq:ftam}) is not true. For example, when $p_1=p_2=r_1=2$, $r_2=3$ and $\gamma = 3/2$, we have $3 \log p_1^{r_1} = 2 \log p_2^{r_2} = \log 64$. Moreover, when $p_1 =2$, $p_2=3$, $r_1=r_2=1$ and $\gamma = \log 3/ \log 2$, one has $\log 3 \log p_1^{r_1} = \log 2 \log p_2^{r_2} = \log 2 \times \log 3$. 
\end{remark}

\begin{example}\label{ex:idm1}
Let $p_n$ be the $n$-th prime number, $L_{+{\rm{i}}}(s) := \prod_n (1-{\rm{i}}^np_n^{-s})^{-1}$ and $L_{-{\rm{i}}}(s) := \prod_n (1-(-{\rm{i}})^np_n^{-s})^{-1}$ (see Example \ref{ex:id1}).
\pn$(i)$ Functions to be infinitely divisible distributions;\\
$\zeta (s)$, $\,\,\,\zeta ^2(s) L_{+{\rm{i}}}(s) L_{-{\rm{i}}}(s)$, $\,\,\,\zeta (s_1)\zeta ^2(s_2) L_{+{\rm{i}}}(s_2) L_{-{\rm{i}}}(s_2)$, $\,\,\,\zeta (s_1+s_2)\zeta ^2(s_2) L_{+{\rm{i}}}(s_2) L_{-{\rm{i}}}(s_2)$.\\
$(ii)$ Functions not to generate infinitely divisible distributions (actually, these are functions not to generate probability distributions);\\
$L_{+{\rm{i}}}(s)$, $\,\,\,L_{-{\rm{i}}}(s)$, $\,\,\,\zeta ^2(s_1) L_{+{\rm{i}}}(s_2) L_{-{\rm{i}}}(s_2)$, $\,\,\,\zeta ^2(s_1+s_2) L_{+{\rm{i}}}(s_2) L_{-{\rm{i}}}(s_2)$, $\,\,\,\zeta ^2(s_1) L_{+{\rm{i}}}(s_1) L_{-{\rm{i}}}(s_2)$.
\end{example}

\subsection{Distribution or not}
We have the following.
\begin{theorem}\label{th:dm1}
Let $\alpha_{lk}(p) \in {\mathbb{C}}$, $|\alpha_{lk}(p)|\le 1$ for any $p\in\Prime$ $1 \le k \le \eta$ and $1 \le l \le \varphi$ in (\ref{eq:defpe4}). Then $f_{\vsig}$ is a characteristic function if and only if $\prod_{l=1}^\varphi a_l(n_l) \ge 0$ for all $(n_1, \ldots ,n_\varphi) \in {\mathbb{N}}^\varphi$, where $a_l(n_l)$ is defined by (\ref{eq:an}).
\end{theorem}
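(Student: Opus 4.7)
The plan is to realize $f_{\vsig}$ as the Fourier transform of an explicit complex measure supported on a discrete set, and then read off positivity of its atom weights. By Lemma \ref{lem:mdst}, substituting $\vs=\vsig+{\rm i}\vt$ into the expansion \eqref{eq:defsz4} yields
\begin{equation*}
f_{\vsig}(\vt) = \frac{1}{Z_E(\vsig)} \sum_{n_1,\dots,n_\varphi=1}^\infty \frac{\prod_{l=1}^\varphi a_l(n_l)}{\prod_{l=1}^\varphi n_l^{\langle \vc_l,\vsig\rangle}} \exp\Bigl(-{\rm i}\bigl\langle \vt, \sum_{l=1}^\varphi \log n_l\,\vc_l\bigr\rangle\Bigr),
\end{equation*}
which is precisely $\int_{\rd} e^{{\rm i}\langle \vt,x\rangle}\,\mu_{\vsig}(dx)$ for
\begin{equation*}
\mu_{\vsig}(dx) := \frac{1}{Z_E(\vsig)} \sum_{(n_1,\dots,n_\varphi)\in\N^\varphi} \frac{\prod_{l=1}^\varphi a_l(n_l)}{\prod_{l=1}^\varphi n_l^{\langle \vc_l,\vsig\rangle}}\, \delta_{-\sum_{l=1}^\varphi \log n_l\,\vc_l}(dx).
\end{equation*}
Since $|a_l(n_l)|=O(n_l^\varepsilon)$ by Lemma \ref{lem:mdst}, this series converges absolutely, so $\mu_{\vsig}$ is a well-defined finite complex measure on $\rd$ with total mass $f_{\vsig}(0)=1$.

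The crucial structural step is to show that the atom locations $-\sum_{l=1}^\varphi \log n_l\,\vc_l$ are pairwise distinct as $(n_1,\dots,n_\varphi)$ varies over $\N^\varphi$. Under (A1), linear independence of $\vc_1,\dots,\vc_\varphi$ turns any coincidence into $\log n_l=\log m_l$ for each $l$, forcing $n_l=m_l$. Under (A2) we have $\vc_l=\gamma_l\vc$ with $\gamma_1=1$, and a coincidence reduces to $\sum_{l=1}^\varphi \gamma_l\log(n_l/m_l)=0$. Setting $S:=\{l\ge 2:n_l\ne m_l\}$ and isolating the $l=1$ term rewrites this as
\begin{equation*}
\frac{n_1}{m_1} = \prod_{l\in S}\Bigl(\frac{m_l}{n_l}\Bigr)^{\gamma_l}.
\end{equation*}
For $l\in S$, each $m_l/n_l$ is an algebraic number different from $0$ and $1$, and the family $\{1\}\cup\{\gamma_l\}_{l\in S}$ is linearly independent over $\Q$ as a subfamily of $1,\gamma_2,\dots,\gamma_\varphi$. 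If $S\ne\emptyset$, Baker's theorem (Proposition \ref{lem:baker}) makes the right-hand side transcendental, contradicting the rationality of the left-hand side; hence $S=\emptyset$, and then $n_1=m_1$ follows from $\log(n_1/m_1)=0$.

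With distinct atoms the conclusion is immediate. If every coefficient $\prod_{l=1}^\varphi a_l(n_l)$ is a non-negative real number, then $\mu_{\vsig}$ is a positive measure of total mass $1$, hence a probability measure whose characteristic function is $f_{\vsig}$. Conversely, if some coefficient lies in $\mathbb{C}\setminus\R_{\ge 0}$, then $\mu_{\vsig}$ is a finite complex measure that is not a probability measure; the uniqueness of the Fourier transform on finite complex measures (used as in the proof of Lemma \ref{lem:2}) then prevents $f_{\vsig}$ from coinciding with the characteristic function of any probability measure. I expect the main obstacle to be the injectivity of the atom map under (A2): without Baker's theorem, distinct tuples $(n_1,\dots,n_\varphi)$ could collide onto the same atom and the rigid dictionary between Dirichlet coefficients and atom weights would break down.
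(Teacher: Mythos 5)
Your proposal is correct and follows essentially the same route as the paper: the paper's Lemma \ref{lem:m1} realizes $f_{\vsig}$ as the characteristic function of the discrete probability measure \eqref{eq:gdsdm} when all $\prod_{l=1}^\varphi a_l(n_l)\ge 0$, and Lemma \ref{lem:m2} proves the converse by exhibiting $f_{\vsig}$ as the Fourier transform of a complex signed measure with finite total variation whose atoms are pairwise distinct — with exactly your injectivity argument, trivial under (A1) and via Baker's theorem under (A2) — and then invoking uniqueness of the Fourier transform. Your presentation merely merges the two lemmas into a single measure-theoretic statement; no substantive difference.
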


To show this theorem, we define a multidimensional Shintani zeta random variable $X_{\vsig}$ with probability distribution on ${\mathbb{R}}^d$ given by
\begin{equation}
{\rm Pr} \Bigl(X_{\vsig}= - \bigl( \log n_1 \vc_1 ,\ldots, \log n_\varphi \vc_\varphi \bigr) \Bigr) = 
\frac{1}{Z_E(\vsig)} \prod_{l=1}^\varphi \frac{a_l(n_l)}{n_l^{\langle \vc_l, \vsig \rangle}}
\label{eq:gdsdm}
\end{equation}
when $\prod_{l=1}^\varphi a_l(n_l) \ge 0$ for all $(n_1, \ldots ,n_\varphi) \in {\mathbb{N}}^\varphi$. It is easy to see that these distributions are probability distributions (see also Lemma \ref{lem:m1}) since the right hand side of (\ref{eq:gdsdm}) is not smaller than $0$ by the assumption for $a_l(n_l)$, and 
$$
\prod_{l=1}^\varphi \sum_{n_l=1}^{\infty} \frac{a_l(n_l)n_l^{-\langle \vc_l, \vsig \rangle}}{Z_E(\vsig)} = 
\frac{1}{Z_E(\vsig)}\!\! \sum_{(n_1, \ldots , n_\varphi) \in {\mathbb{N}}} \frac{a_1(n_1)}{n_1^{\langle \vc_1,\vsig \rangle}} \cdots \frac{a_\varphi(n_\varphi)}{n_\varphi^{\langle \vc_\varphi,\vsig \rangle}}
=\frac{Z_E(\vsig)}{Z_E(\vsig)}=1 .
$$

We only have to show the following Lemmas \ref{lem:m1} and \ref{lem:m2} to prove Theorem \ref{th:dm1}. Note that Lemmas \ref{lem:m1} and \ref{lem:m2} are a multi sum version of Lemmas \ref{lem:1} and \ref{lem:2}, respectively. 

\begin{lemma}\label{lem:m1}
Let $X_{\vsig}$ be a Shintani zeta random random variable. Then its characteristic function $f_{\vsig}$ is given by (\ref{eq:nor1}).
\end{lemma}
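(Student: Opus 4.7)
The plan is to mimic the one-dimensional computation in Lemma \ref{lem:1} and simply compute $E[e^{{\rm i}\langle \vt, X_{\vsig}\rangle}]$ directly from the definition \eqref{eq:gdsdm}. Interpreting $X_{\vsig}$ as the $\rd$-valued random vector $-\sum_{l=1}^\varphi (\log n_l)\vc_l$ (so that $\langle \vt, X_{\vsig}\rangle$ makes sense and reduces to the formula used in Lemma \ref{lem:1} when $\varphi=1$), I would write
\[
f_{\vsig}(\vt) = \sum_{n_1,\ldots ,n_\varphi=1}^\infty e^{-{\rm i}\sum_{l=1}^\varphi(\log n_l)\langle \vt,\vc_l\rangle}\,\frac{1}{Z_E(\vsig)}\prod_{l=1}^\varphi \frac{a_l(n_l)}{n_l^{\langle \vc_l,\vsig\rangle}}.
\]

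Next I would use bilinearity of $\langle\,\cdot\,,\,\cdot\,\rangle$ to split the exponential into a product, namely $e^{-{\rm i}\sum_l(\log n_l)\langle \vt,\vc_l\rangle}=\prod_l n_l^{-{\rm i}\langle \vc_l,\vt\rangle}$, and then combine each factor with the denominator to obtain $n_l^{\langle \vc_l,\vsig\rangle+{\rm i}\langle \vc_l,\vt\rangle}=n_l^{\langle \vc_l,\vsig+{\rm i}\vt\rangle}$. This yields
\[
f_{\vsig}(\vt)=\frac{1}{Z_E(\vsig)}\sum_{n_1,\ldots,n_\varphi=1}^\infty\prod_{l=1}^\varphi\frac{a_l(n_l)}{n_l^{\langle \vc_l,\vsig+{\rm i}\vt\rangle}}.
\]

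Then I would interchange the multiple sum with the product over $l$, which is legitimate because of the absolute convergence statement in Lemma \ref{lem:mdst}: since $\min_{1\le l\le\varphi}\Re\langle \vc_l,\vsig+{\rm i}\vt\rangle = \min_{1\le l\le\varphi}\langle \vc_l,\vsig\rangle>1$, each factor $\sum_{n_l}a_l(n_l)n_l^{-\langle \vc_l,\vsig+{\rm i}\vt\rangle}$ converges absolutely, and the product of these series equals the multiple series in (\ref{eq:defsz4}). After this interchange, the right-hand side becomes exactly $Z_E(\vsig+{\rm i}\vt)/Z_E(\vsig)$, which is $f_{\vsig}(\vt)$ by (\ref{eq:nor1}), finishing the proof.

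There is essentially no conceptual obstacle here; the only point requiring any care is justifying the swap of the multiple Dirichlet-type summation with the product over $l$, and this is handled entirely by the absolute-convergence part of Lemma \ref{lem:mdst}. The computation is a direct generalization of the one carried out in Lemma \ref{lem:1}, with the single Dirichlet series replaced by the $\varphi$-fold product obtained in Lemma \ref{lem:mdst}.
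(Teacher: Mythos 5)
Your proposal is correct and follows essentially the same route as the paper: both compute $E[e^{{\rm i}\langle \vt, X_{\vsig}\rangle}]$ directly from \eqref{eq:gdsdm}, factor the exponential so that each $n_l$ acquires the exponent $\langle \vc_l,\vsig\rangle+{\rm i}\langle \vc_l,\vt\rangle$, and identify the result with $Z_E(\vsig+{\rm i}\vt)/Z_E(\vsig)$ via the product/multiple-series identity of Lemma \ref{lem:mdst}. Your explicit reading of $X_{\vsig}$ as $-\sum_{l=1}^\varphi(\log n_l)\vc_l$ and your explicit appeal to absolute convergence for the sum--product interchange are exactly the (implicit) steps of the paper's computation.
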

\begin{proof}
By the definition, we have, for any $\vt \in {\mathbb{R}}^d$, 
\begin{equation*}
\begin{split}
f_{\vsig}(t) = & \prod_{l=1}^\varphi \sum_{n_l=1}^\infty 
e^{{\rm i} \langle \vt, -\log n_l \vc_l \rangle} \frac{a_l(n_l)n_l^{-\langle \vc_l, \vsig \rangle}}{Z_E(\vsig)} =
\frac{1}{Z_E(\vsig)} \prod_{l=1}^\varphi \sum_{n_l=1}^\infty 
\frac{e^{-{\rm i} \langle \vc_l, \vt \rangle \log n_l}a_l(n_l)}{n_l^{\langle \vc_l, \vsig \rangle}} \\ = &\frac{1}{Z_E(\vsig)} \prod_{l=1}^\varphi \sum_{n_l=1}^\infty  
\frac{a_l(n_l)}{n_l^{\langle \vc_l, \vsig \rangle + {\rm i} \langle \vc_l, \vt \rangle}}  =
\frac{1}{Z_E(\vsig)}\!\! \sum_{(n_1, \ldots , n_\varphi) \in {\mathbb{N}}} \frac{a_1(n_1)}{n_1^{\langle \vc_1,\vsig \rangle + {\rm i} \langle \vc_1, \vt \rangle}} \cdots \frac{a_\varphi(n_\varphi)}{n_\varphi^{\langle \vc_\varphi,\vsig \rangle + {\rm i} \langle \vc_\varphi, \vt \rangle}} \\ = &
\frac{Z_E(\vsig +{\rm i}\vt)}{Z_E(\vsig)}.
\end{split}
\end{equation*}
This equality implies the lemma.
\end{proof}

\begin{lemma}\label{lem:m2}
Suppose that there exists a pair of integers $(m_1, \ldots, m_\varphi) \in {\mathbb{N}}^\varphi$ such that $\prod_{l=1}^\varphi a_l(m_l) \in {\mathbb{C}} \setminus {\mathbb{R}}_{\ge 0}$. Then  $Z_E(\vsig +{\rm i}\vt)/Z_E(\vsig)$ is not a characteristic function. 
\end{lemma}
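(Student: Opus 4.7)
The plan is to adapt the argument of Lemma \ref{lem:2} to the multi-index setting, with the injectivity of atom locations now supplied by (A1) or, under (A2), by Baker's theorem. Using (\ref{eq:defsz4}) and the identity $\prod_{l=1}^\varphi e^{{\rm i}\langle\vt,-\log n_l\,\vc_l\rangle}=e^{{\rm i}\langle\vt,-\sum_l (\log n_l)\vc_l\rangle}$, one rewrites $Z_E(\vsig+{\rm i}\vt)/Z_E(\vsig)=\int_{\rd}e^{{\rm i}\langle\vt,x\rangle}M_{\vsig}(dx)$, where
\[
M_{\vsig}(dx) := \frac{1}{Z_E(\vsig)}\sum_{(n_1,\ldots,n_\varphi)\in\N^\varphi}\prod_{l=1}^\varphi \frac{a_l(n_l)}{n_l^{\langle\vc_l,\vsig\rangle}}\,\delta_{-\sum_{l=1}^\varphi(\log n_l)\vc_l}(dx).
\]
Because $|a_l(n)|=O(n^\varepsilon)$ by Lemma \ref{lem:mdst} and $\min_{1\le l\le\varphi}\langle\vc_l,\vsig\rangle>1$, the total variation of $M_{\vsig}$ is bounded by $|Z_E(\vsig)|^{-1}\prod_{l=1}^\varphi\sum_{n=1}^\infty|a_l(n)|n^{-\langle\vc_l,\vsig\rangle}<\infty$, so $M_{\vsig}$ is a finite complex measure on $\rd$.

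The crux of the proof is the injectivity statement
\[
\sum_{l=1}^\varphi(\log m_l)\,\vc_l = \sum_{l=1}^\varphi(\log n_l)\,\vc_l \Longleftrightarrow m_l=n_l \text{ for all } 1\le l\le\varphi.
\]
Under (A1) this is immediate: the displayed relation is a real linear combination of the $\vc_l$ with coefficients $\log(m_l/n_l)$, which must all vanish by linear independence. Under (A2), writing $\vc_l=\gamma_l\vc$ with $\gamma_1=1$, the relation reduces to $\sum_{l=1}^\varphi\gamma_l\log(m_l/n_l)=0$, equivalently $\prod_{l=2}^\varphi(m_l/n_l)^{\gamma_l}=n_1/m_1\in\Q$. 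After discarding the indices with $m_l=n_l$, the remaining left-hand side is a product of positive rationals different from $0$ and $1$ raised to algebraic exponents that, together with $1$, are $\Q$-linearly independent (a consequence of $\gamma_1=1$ and the assumed $\Q$-linear independence of $\gamma_1,\ldots,\gamma_\varphi$). Proposition \ref{lem:baker} then forces such a product to be transcendental, contradicting its rationality, so every remaining factor must be trivial and $m_l=n_l$ for all $l$.

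Injectivity ensures that the atom of $M_{\vsig}$ at $-\sum_{l=1}^\varphi(\log m_l)\vc_l$ has mass exactly $Z_E(\vsig)^{-1}\prod_{l=1}^\varphi a_l(m_l)m_l^{-\langle\vc_l,\vsig\rangle}$. If $Z_E(\vsig+{\rm i}\vt)/Z_E(\vsig)$ were a characteristic function, uniqueness of the Fourier transform on finite complex measures would force $M_{\vsig}$ to coincide with a probability measure; comparing the masses at the origin (from the unique tuple $m_1=\cdots=m_\varphi=1$, using $a_l(1)=1$ in Remark \ref{rem:an}) would yield $1/Z_E(\vsig)\in\R_{\ge 0}$ and hence $Z_E(\vsig)>0$, forcing every atom mass to be a positive multiple of the corresponding $\prod_l a_l(m_l)$, contradicting the hypothesis. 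The main obstacle is the injectivity step under (A2); without the transcendence input of Baker's theorem, distinct tuples could land on the same atom and their non-positive contributions could conceivably combine to yield a non-negative real mass, breaking the argument.
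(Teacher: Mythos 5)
Your proposal is correct and follows essentially the same route as the paper's proof: represent $Z_E(\vsig+{\rm i}\vt)/Z_E(\vsig)$ as the Fourier transform of a finite complex measure supported on the atoms $-\sum_{l}(\log n_l)\vc_l$, establish injectivity of the atom locations via linear independence under (A1) and Baker's theorem under (A2) (the paper cites \cite[Proposition 2.2]{Nakamurasr1} but reproduces the same Baker argument), and conclude by uniqueness of the Fourier transform for complex measures of finite total variation. Your handling of the final step is in fact slightly more careful than the paper's, since you explicitly use the atom at the origin to pin down the sign of $Z_E(\vsig)$ before deriving the contradiction.
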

\begin{proof}
Let ${\mathbb{N}}_+^\varphi$ be the set of pairs of integers $(n_1, \ldots , n_\varphi)$ such that $\prod_{l=1}^\varphi a_l(n_l)\ge 0$ and ${\mathbb{N}}_+^{\varphi c}$ be the set of pairs of integers $(m_1 ,\ldots, m_\varphi)$ such that $\prod_{l=1}^\varphi a_l(m_l) \in {\mathbb{C}} \setminus {\mathbb{R}}_{\ge 0}$. From the view point of (\ref{eq:gdsdm}), it holds that 
\begin{equation*}
\begin{split}
& \frac{Z_E(\vsig +{\rm i}\vt)}{Z_E(\vsig)} = \frac{1}{Z_E(\vsig)} \prod_{l=1}^\varphi \sum_{n_l=1}^\infty 
\frac{a_l(n_l)}{n_l^{\langle \vc, \vs \rangle}} =
\frac{1}{Z_E(\vsig)} \sum_{n_1, \ldots ,n_\varphi=1}^\infty \frac{a_1(n_1)}{n_1^{\langle \vc_1,\vs\rangle}}
\cdots \frac{a_\varphi(n_\varphi)}{n_\varphi^{\langle \vc_\varphi,\vs\rangle}} \\ = &
\frac{1}{Z_E(\vsig)}\!\! \sum_{(n_1, \ldots , n_\varphi) \in {\mathbb{N}}_+^\varphi} \frac{a_1(n_1)}{n_1^{\langle \vc_1,\vs\rangle}} \cdots \frac{a_\varphi(n_\varphi)}{n_\varphi^{\langle \vc_\varphi,\vs\rangle}} \,+\,
\frac{1}{Z_E(\vsig)}\!\! \sum_{(m_1, \ldots , m_\varphi) \in {\mathbb{N}}_+^{\varphi c}} \frac{a_1(m_1)}{m_1^{\langle \vc_1,\vs\rangle}} \cdots \frac{a_\varphi(m_\varphi)}{m_\varphi^{\langle \vc_\varphi,\vs\rangle}} \\ = &
\frac{1}{Z_E(\vsig)}\!\! \sum_{(n_1, \ldots , n_\varphi) \in {\mathbb{N}}_+^\varphi} \frac{a_1(n_1)}{n_1^{\langle \vc_1,\vsig \rangle}} \cdots \frac{a_\varphi(n_\varphi)}{n_\varphi^{\langle \vc_\varphi,\vsig \rangle}}
\int_{\mathbb{R}^d} e^{{\rm i}\langle \vt, x \rangle} \delta_{- \sum_{l=1}^\varphi \log n_l \vc_l} (dx) \\ &+
\frac{1}{Z_E(\vsig)}\!\! \sum_{(m_1, \ldots , m_\varphi) \in {\mathbb{N}}_+^{\varphi c}} \frac{a_1(m_1)}{m_1^{\langle \vc_1,\vsig\rangle}} \cdots \frac{a_\varphi(m_\varphi)}{m_\varphi^{\langle \vc_\varphi,\vsig\rangle}}
\int_{\mathbb{R}^d} e^{{\rm i}\langle \vt, x \rangle} \delta_{- \sum_{l=1}^\varphi \log m_l \vc_l} (dx) .
\end{split}
\end{equation*}

We have to check
\begin{equation}
\sum_{l=1}^\varphi \log n_l \vc_l = \sum_{l=1}^\varphi \log m_l \vc_l \quad \mbox{ if and only if } \quad 
n_l=m_l \mbox{ for all } 1 \le l \le \varphi.
\label{eq:logli1}
\end{equation}
Under the assumption (A1), the statement above is obvious. Thus suppose (A2). In this case, the assertion (\ref{eq:logli1}) is equivalent to
\begin{equation*}
\begin{split}
&\log n_1 + \gamma_2 \log n_2 + \cdots + \gamma_\varphi \log n_\varphi = 
\log m_1 + \gamma_2 \log m_2 + \cdots + \gamma_\varphi \log m_\varphi \\
&\mbox{if and only if } \quad n_l=m_l \,\, \mbox{ for all } \,\, 1 \le l \le \varphi.
\end{split}
\end{equation*}
Though this is immediately proved by \cite[Proposition 2.2]{Nakamurasr1}, we write the proof here for convenience of readers. Suppose $n_1/m_1 = (m_2/n_2)^{\gamma_2} \cdots (m_\varphi/n_\varphi)^{\gamma_\varphi}$. Obviously, $n_1/m_1$ is a rational number. If there exists $2 \le l \le \varphi$ such that $n_l \ne m_l$, the number $(m_2/n_2)^{\gamma_2} \cdots (m_\varphi/n_\varphi)^{\gamma_\varphi}$ is transcendental by Proposition \ref{lem:baker}. Thus we have $n_l=m_l$ for any $2 \le l \le \varphi$. Then one has $n_1/m_1=1$. Hence we obtain (\ref{eq:logli1}). 

By Remark \ref{rem:an}, we have $a_l(1)=1$ for any $1 \le l \le \varphi$. Hence 
\begin{equation*}
\begin{split}
&\frac{1}{Z_E(\vsig)}\!\! \sum_{(n_1, \ldots , n_\varphi) \in {\mathbb{N}}_+^\varphi} \frac{a_1(n_1)}{n_1^{\langle \vc_1,\vsig \rangle}} \cdots \frac{a_\varphi(n_\varphi)}{n_\varphi^{\langle \vc_\varphi,\vsig \rangle}}
 \delta_{- \sum_{l=1}^\varphi \log n_l \vc_l} (dx) \\ &+
\frac{1}{Z_E(\vsig)}\!\! \sum_{(m_1, \ldots , m_\varphi) \in {\mathbb{N}}_-^\varphi} \frac{a_1(m_1)}{m_1^{\langle \vc_1,\vsig\rangle}} \cdots \frac{a_\varphi(m_\varphi)}{m_\varphi^{\langle \vc_\varphi,\vsig\rangle}}
\delta_{- \sum_{l=1}^\varphi \log m_l \vc_l} (dx)
\end{split}
\end{equation*}
is not a measure but a complex signed measure with finite total variation by the coefficients $\prod_{l=1}^\varphi a_l(1) >0$ and $\prod_{l=1}^\varphi a_l(m_l) \in {\mathbb{C}} \setminus {\mathbb{R}}_{\ge 0}$, Lemma \ref{lem:mdst} and 
$$
\int_{\mathbb{R}^d} \prod_{l=1}^\varphi \sum_{n_l=1}^\infty \biggl| 
\frac{a_l(n_l)}{n_l^{\langle \vc, \vsig \rangle}} \biggr| \delta_{- \sum_{l=1}^\varphi \log m_l \vc_l} (dx) =
\prod_{l=1}^\varphi \sum_{n_l=1}^\infty \frac{|a_l(n_l)|}{n_l^{\langle \vc, \vsig \rangle}} < \infty . 
$$
Therefore $Z_E(\vsig +{\rm i}\vt)/Z_E(\vsig)$ is not a characteristic function. 
\end{proof}

\begin{remark}\label{rem:dnm1}
If where $1=\gamma_1, \gamma_2, \ldots ,\gamma_\varphi$ are linearly dependent over the rationals, the statement (\ref{eq:logli1}) is not true. We have the same counter example treated in Remark \ref{rem:idm1}. 
\end{remark}

\begin{example}
We use the same notation appeared in Example \ref{exa:dqi1}. 
\pn$(iii)$ Functions to be probability distributions (actually, these are functions to generate infinitely divisible distributions);\\
$\zeta (s)$, $\,\,\,\zeta (s) L(s)$, $\,\,\,\zeta (s_1) L(s_1) \zeta(s_2)$, $\,\,\,\zeta (s_1+s_2)\zeta (s_2) L(s_2)$, $\,\,\,\zeta (s_1+s_2) L(s_1+s_2) \zeta(s_2) L(s_2)$.
\pn$(iv)$ Functions not to generate probability distributions;\\
$L(s)$, $\,\,\,\zeta (s_1) L(s_2)$, $\,\,\,\zeta (s_1) L(s_1) L(s_2)$, $\,\,\,\zeta (s_1+s_2) L(s_2) L(s_2)$, $\,\,\,\zeta (s_1+s_2) \zeta(s_2) L(s_2) L(s_2)$.
\end{example}

\subsection{Classification}
Let $F^Z_E$ be the set of normalized functions of $Z_E(\vsig +{\rm i}\vt)/Z_E(\vsig)$, where $Z_E(\vsig +{\rm i}\vt)$ is defined by (\ref{eq:defpe4}) or (\ref{eq:defsz4}). We use the same notation $\widehat{I\!D}$, $\widehat{I\!D^0}$ and $\widehat{N\!D}$ defined at the beginning of Section 3.3. Then we have the following theorem.
\begin{theorem}\label{th:clam1}
Let $f_{\vsig} \in F^Z_E$, $\alpha_{lk}(p) \in {\mathbb{C}}$, $|\alpha_{lk}(p)|\le 1$ for any $p\in\Prime$, $1 \le k \le \eta$ and $1 \le l \le \varphi$ in (\ref{eq:defpe4}). Then we have\\
$(I)$ $f_{\vsig} \in \widehat{I\!D}$ if and only if $\sum_{k=1}^\eta \alpha_{lk}(p)^r \ge 0$ for all $r \in {\mathbb{N}}$, $p\in\Prime$ and $1 \le l \le \varphi$.\\
$(II)$ $f_{\vsig} \in \widehat{I\!D^0}$ if and only if $\prod_{l=1}^\varphi a_l(n_l) \ge 0$ for all $(n_1, \ldots , n_\varphi) \in {\mathbb{N}}^\varphi$, and there exist $r_0 \in {\mathbb{N}}$, $p_0 \in\Prime$ and $1 \le l_0 \le \varphi$ such that $\sum_{k=1}^\eta \alpha_{l_0k}(p_0)^{r_0} <0$.\\
$(III)$ $f_{\vsig} \in \widehat{N\!D}$ if and only if there exists a pair of integers $(m_1, \ldots, m_\varphi) \in {\mathbb{N}}^\varphi$ such that $\prod_{l=1}^\varphi a_l(m_l)  \in {\mathbb{C}} \setminus {\mathbb{R}}_{\ge 0}$. \\
$(IV)$ $F^Z_E= \widehat{I\!D} \biguplus \widehat{I\!D^0} \biguplus \widehat{N\!D}$. 
\end{theorem}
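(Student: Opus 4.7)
The plan is to reduce (IV) to (I)--(III), read off (I) from Theorem \ref{th:idm1} and (III) from Theorem \ref{th:dm1}, and prove (II) via a multidimensional analogue of Lemma \ref{lem:cla1}. The single new piece of content needed is the following claim: if $\prod_{l=1}^\varphi a_l(n_l)\ge 0$ for every $(n_1,\dots,n_\varphi)\in\mathbb{N}^\varphi$, then $\sum_{k=1}^\eta \alpha_{lk}(p)^r\in\mathbb{R}$ for all $r\in\mathbb{N}$, $p\in\Prime$, and $1\le l\le \varphi$.

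To establish this claim, I would exploit the fact that $a_l(1)=1$ for every $l$ (Remark \ref{rem:an}). Fixing any index $l_0$ and setting $n_l=1$ for $l\ne l_0$ reduces the hypothesis to $a_{l_0}(n)\ge 0$ for all $n\in\mathbb{N}$. Then I would run the proof of Lemma \ref{lem:cla1} on the single-variable partial product $Z_{E,l_0}(\vs):=\prod_p\prod_{k=1}^\eta(1-\alpha_{l_0 k}(p)p^{-\langle \vc_{l_0},\vs\rangle})^{-1}=\sum_{n=1}^\infty a_{l_0}(n)n^{-\langle \vc_{l_0},\vs\rangle}$: reality of the coefficients forces $\overline{Z_{E,l_0}(\vsig+{\rm i}\vt)}=Z_{E,l_0}(\vsig-{\rm i}\vt)$, and comparing the two logarithmic Dirichlet expansions via uniqueness of the generalized Dirichlet series gives $\overline{\sum_k\alpha_{l_0 k}(p)^r}=\sum_k\alpha_{l_0 k}(p)^r$ for all $p,r$.

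With that lemma in hand, (I) is exactly Theorem \ref{th:idm1} and (III) is the contrapositive of Theorem \ref{th:dm1}. For (II), the forward implication follows immediately: $f_{\vsig}\in\widehat{I\!D^0}$ means $f_{\vsig}$ is a characteristic function (hence $\prod_l a_l(n_l)\ge 0$ by Theorem \ref{th:dm1}) and fails to be infinitely divisible (hence by Theorem \ref{th:idm1} some $\sum_k\alpha_{l_0 k}(p_0)^{r_0}$ is not $\ge 0$; the new lemma forces it to be real, so it must be strictly negative). For the reverse implication, the hypotheses give a characteristic function via Theorem \ref{th:dm1} and rule out membership in $\widehat{I\!D}$ via Theorem \ref{th:idm1}; to conclude $f_{\vsig}\in\widehat{I\!D^0}$ I would invoke Lemma \ref{lem:lmm1} to get a complex measure $N_{\vsig}^Z$ satisfying the integrability condition, and use the new lemma to observe that all of its coefficients $\sum_{k=1}^\eta\alpha_{lk}(p)^r p^{-r\langle \vc_l,\vsig\rangle}/r$ are real, so $N_{\vsig}^Z$ is actually a signed measure. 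The displayed identity in the proof of Lemma \ref{lem:lmm1} then exhibits $f_{\vsig}$ in the quasi-L\'evy--Khintchine form (\ref{INF2}), certifying quasi-infinite divisibility.

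Finally, (IV) follows formally: the three classes are pairwise disjoint by definition, and for any $f_{\vsig}\in F^Z_E$ either $f_{\vsig}$ is not a characteristic function (in which case Theorem \ref{th:dm1} places it in $\widehat{N\!D}$ by (III)) or it is, in which case (I) and (II) dichotomize whether it is ID or merely QID. The main obstacle of the whole argument, as in the one-dimensional case, is the reduction of multiplicative positivity of the single product $\prod_l a_l(n_l)$ to positivity of each factor $a_l(n_l)$ separately; the normalization $a_l(1)=1$ makes this step cheap, and once it is in place the uniqueness-of-Dirichlet-series step is routine. No use of Baker's theorem is required here---that ingredient was already absorbed into Theorems \ref{th:idm1} and \ref{th:dm1}.
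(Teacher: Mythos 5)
Your proposal is correct and follows the same overall architecture as the paper: (I) and (III) are read off from Theorems \ref{th:idm1} and \ref{th:dm1}, (IV) is formal, and everything reduces to the single new claim that positivity of $\prod_{l}a_l(n_l)$ forces each $\sum_{k}\alpha_{lk}(p)^r$ to be real (the paper isolates exactly this as Lemma \ref{lem:clam1}). The one place you genuinely diverge is in proving that claim: the paper reruns the conjugate-symmetry argument on the full multidimensional product, writing $\overline{Z_E(\vsig+\vt)}=Z_E(\vsig-\vt)$ and comparing the two logarithmic expansions term by term, which implicitly needs the exponents $r\langle\vc_l,\cdot\rangle\log p$ to be pairwise distinct across $(l,p,r)$ --- i.e.\ the separation established via Baker's theorem under (A2). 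You instead specialize $n_l=1$ for $l\ne l_0$, use $a_l(1)=1$ to conclude $a_{l_0}(n)\ge 0$ for all $n$, and then quote the one-dimensional Lemma \ref{lem:cla1} applied to the single factor $Z_{E,l_0}$. This reduction is cleaner, makes the per-$l$ conclusion explicit (the paper's displayed identity only equates the double sums over $l$ and $k$), and justifies your remark that no further appeal to Baker's theorem is needed at this stage. Your explicit verification that the signed measure from Lemma \ref{lem:lmm1} certifies quasi-infinite divisibility in the reverse direction of (II) is also more careful than the paper, which leaves that step implicit.
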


To show the theorem above, we only have to prove the following lemma the same as in Section 3.3.
\begin{lemma}\label{lem:clam1}
Let $\alpha_{lk}(p) \in {\mathbb{C}}$, $|\alpha_{lk}(p)|\le 1$ for any $p\in\Prime$, $1 \le k \le \eta$ and $1 \le l \le \varphi$ in (\ref{eq:defpe4}). Suppose that $\prod_{l=1}^\varphi a_l(n_l) \ge 0$ for all $(n_1, \ldots , n_\varphi) \in {\mathbb{N}}^\varphi$, where $a_l(n_l)$ is defined by (\ref{eq:an}). Then $\sum_{k=1}^\eta \alpha_{lk}(p)^r$ is real for any $r \in {\mathbb{N}}$, $p\in\Prime$ and $1 \le l \le \varphi$. 
\end{lemma}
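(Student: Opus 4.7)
The plan is to mimic the strategy of Lemma \ref{lem:cla1}, but first I need to isolate each index $l$ from the hypothesis on the product $\prod_{l=1}^{\varphi} a_l(n_l)$. The key observation is Remark \ref{rem:an}: for every $l$ one has $a_l(1) = 1$. So, fixing any index $l_0 \in \{1, \ldots, \varphi\}$ and any $m \in \mathbb{N}$, setting $n_{l_0} = m$ and $n_l = 1$ for $l \ne l_0$ in the hypothesis gives
\[
a_{l_0}(m) \;=\; \prod_{l=1}^{\varphi} a_l(n_l) \;\ge\; 0.
\]
Thus the hypothesis reduces to: $a_l(n) \ge 0$ for every $l$ and every $n \in \mathbb{N}$. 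In particular each $a_l(n)$ is real.

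Next, fix $l$ and consider the single-factor Euler product
\[
Z_l(\vs) \;:=\; \prod_p \prod_{k=1}^{\eta} \biggl(1 - \frac{\alpha_{lk}(p)}{p^{\langle \vc_l, \vs \rangle}} \biggr)^{-1} \;=\; \sum_{n=1}^{\infty} \frac{a_l(n)}{n^{\langle \vc_l, \vs \rangle}},
\]
absolutely convergent in $\Re \langle \vc_l, \vs \rangle > 1$ by Lemma \ref{lem:mdst}. Since every coefficient $a_l(n)$ is real, one has the identity $\overline{Z_l(\vsig + \mathrm{i}\vt)} = Z_l(\vsig - \mathrm{i}\vt)$ in that region. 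Taking logarithms using the Dirichlet series expansion \eqref{eq:9.19} yields
\[
\sum_p \sum_{r=1}^{\infty} \sum_{k=1}^{\eta} \frac{1}{r}\, \overline{\alpha_{lk}(p)}^{\,r} \, p^{-r \langle \vc_l, \vsig \rangle + r \mathrm{i} \langle \vc_l, \vt\rangle} \;=\; \sum_p \sum_{r=1}^{\infty} \sum_{k=1}^{\eta} \frac{1}{r}\, \alpha_{lk}(p)^{r} \, p^{-r \langle \vc_l, \vsig \rangle + r \mathrm{i} \langle \vc_l, \vt\rangle}.
\]
Collecting terms with the same exponent $p^{r \langle \vc_l, \vs \rangle}$ and invoking the uniqueness theorem for (generalized) Dirichlet series (see \cite[Theorem 11.3]{Apo}), we conclude
\[
\sum_{k=1}^{\eta} \overline{\alpha_{lk}(p)}^{\,r} \;=\; \sum_{k=1}^{\eta} \alpha_{lk}(p)^{r}
\]
for every $r \in \mathbb{N}$ and every $p \in \mathbb{P}$. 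Since $l$ was arbitrary, $\sum_{k=1}^{\eta} \alpha_{lk}(p)^r$ is real in each case.

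The only delicate point is the reduction step: one must be sure that the hypothesis genuinely isolates a single $a_l$, which is where $a_l(1) = 1$ is essential. Once that reduction is in hand, the rest is a direct transcription of the single-variable argument in Lemma \ref{lem:cla1} applied to each factor $Z_l(\vs)$ separately, so no new obstacles arise from the multidimensional or multi-factor structure.
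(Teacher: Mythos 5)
Your proof is correct, and it takes a slightly different (and in one respect cleaner) route than the paper's. The paper conjugates the \emph{full} product at once: from $\overline{Z_E(\vsig+\vt)}=Z_E(\vsig-\vt)$ and the expansion (\ref{eq:9.19}) it deduces, via uniqueness of Dirichlet series, that $\sum_{l=1}^\varphi\sum_{k=1}^\eta \overline{\alpha_{lk}(p)}{}^{\,r}=\sum_{l=1}^\varphi\sum_{k=1}^\eta \alpha_{lk}(p)^{r}$; as written this only gives reality of the double sum over $l$ and $k$, and to peel off the individual terms $\sum_{k}\alpha_{lk}(p)^r$ for each $l$ one must additionally know that the frequencies $r\langle\vc_l,\cdot\rangle\log p$ do not collide across distinct $l$, which is exactly where (A1) or (A2) (the latter via Baker's theorem, as in the proof of Theorem \ref{th:idm1}) would have to be invoked. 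Your preliminary specialization $n_l=1$ for $l\neq l_0$, using $a_l(1)=1$ from Remark \ref{rem:an}, reduces the hypothesis to $a_l(n)\ge 0$ for every single $l$ and $n$, after which the one-dimensional argument of Lemma \ref{lem:cla1} applies verbatim to each factor $Z_l$; this yields the per-$l$ conclusion directly and does not use (A1)/(A2) at all. The core mechanism (real coefficients $\Rightarrow$ conjugation symmetry $\Rightarrow$ compare logarithmic Dirichlet expansions) is the same in both proofs, and the one genuinely shared subtlety --- that $\overline{\log Z_l(\vsig+{\rm i}\vt)}$ and $\log Z_l(\vsig-{\rm i}\vt)$, both defined by termwise series, agree and not merely modulo $2\pi{\rm i}$ --- is handled implicitly in the paper as well (they agree at $\vt=\vec{0}$ and depend continuously on $\vt$), so it is not a gap specific to your write-up.
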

\begin{proof}
By (\ref{eq:defsr3}) and the assumption that $\prod_{l=1}^\varphi a_l(n_l) \ge 0$ for all $(n_1, \ldots , n_\varphi) \in {\mathbb{N}}^\varphi$, one has $\overline{Z_E(\vsig+\vt)} = Z_E (\vsig-\vt)$. From the view of the proof of Theorem \ref{th:idm1}, we have
\begin{equation*}
\begin{split}
\log \overline{f_\vsig (\vt)} =& \sum_p \sum_{r=1}^{\infty} \sum_{l=1}^\varphi \sum_{k=1}^\eta \frac{1}{r} 
\overline{\alpha_{lk}(p)^r} p^{-r\langle\vc_l,\vsig\rangle} \bigl(p^{r\langle\vc_l,{\rm i}\vt\rangle} -1\bigr), \\
\log f_\vsig(-\vt) =& \sum_p \sum_{r=1}^{\infty} \sum_{l=1}^\varphi \sum_{k=1}^\eta \frac{1}{r} 
\alpha_{lk}(p)^r p^{-r\langle\vc_l,\vsig\rangle} \bigl(p^{r\langle\vc_l,{\rm i}\vt\rangle} -1\bigr).
\end{split}
\end{equation*}
Therefore, we obtain $\sum_{l=1}^\varphi \sum_{k=1}^\eta \overline{\alpha_{lk}(p)} {}^r = \sum_{l=1}^\varphi \sum_{k=1}^\eta \alpha_{lk}(p)^r$ for any $r \in {\mathbb{N}}$, $p\in\Prime$ and $1 \le l \le \varphi$. 
\end{proof}

\begin{example}
Put $Z_Q (s) := (1-2^{-s})^{-1}(1+2^{-2s})^{-1}$ (see Example \ref{exa:q} and Remark \ref{re:col}). Then the functions $\zeta (s_1) Z_Q(s_2)$, $\zeta(s_1+s_2) Z_Q(s_1)$, $Z_Q(s_1) Z_Q(s_2)$ and $Z_Q(s_1+s_2) Z_Q(s_1)$ generate quasi-infinite divisible but non-infinite divisible characteristic functions. 
\end{example}

Therefore we generally have $\widehat{I\!D^0} \ne \emptyset$ (see also Example \ref{exa:q}). The following proposition implies $\widehat{I\!D^0} = \emptyset$ when $\alpha_{lk}(p) \in \{ 0,1,-1\}$. This result is proved in \cite[Theorem 3.15]{ANPE}. We give a simpler proof here by using Theorem \ref{th:clam1}. 
\begin{proposition}[{\cite[Theorem 3.15]{ANPE}}]
Let $\alpha_{lk}(p) \in \{ 0,1,-1\}$ for any $p\in\Prime$, $1 \le k \le \eta$ and $1 \le l \le \varphi$ in (\ref{eq:defpe4}). Then it holds that $F_E^Z= \widehat{I\!D} \biguplus \widehat{N\!D}$. Namely, we have $f_{\vsig} \in \widehat{N\!D}$ if and only if there exist $r_0 \in {\mathbb{N}}$, $p_0 \in\Prime$ and $1 \le l_0 \le \varphi$ such that $\sum_{k=1}^\eta \alpha_{l_0k}(p_0)^{r_0} <0$. 
\end{proposition}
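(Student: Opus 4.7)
The plan is to reduce the proposition to showing $\widehat{I\!D^0}=\emptyset$ under the assumption $\alpha_{lk}(p)\in\{0,1,-1\}$, after which the stated ``if and only if'' characterization of $\widehat{N\!D}$ follows at once from parts (I) and (III) of Theorem \ref{th:clam1}. Concretely, since the general decomposition $F_E^Z=\widehat{I\!D}\biguplus\widehat{I\!D^0}\biguplus\widehat{N\!D}$ is already in hand, I only need to verify that $f_{\vsig}\notin\widehat{I\!D}$ forces $f_{\vsig}\in\widehat{N\!D}$. The overall strategy mirrors the one-dimensional argument in the proposition stated just above Theorem \ref{th:emp1}, lifted to several coordinates via the multiplicativity of the $a_l(n_l)$.

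So, suppose $f_{\vsig}\notin\widehat{I\!D}$. By Theorem \ref{th:clam1}(I), there exist $r_0\in\mathbb{N}$, $p_0\in\Prime$ and $1\le l_0\le\varphi$ with $\sum_{k=1}^\eta \alpha_{l_0 k}(p_0)^{r_0}<0$. The ternary restriction $\alpha_{lk}(p)\in\{0,1,-1\}$ now makes the rest elementary: $\alpha_{l_0 k}(p_0)^{2j}\in\{0,1\}$ yields $\sum_{k=1}^\eta \alpha_{l_0 k}(p_0)^{2j}\ge 0$ for every $j\in\mathbb{N}$, so $r_0$ must be odd; and since $\alpha^{2j-1}=\alpha$ on $\{0,1,-1\}$, I obtain
\[
\sum_{k=1}^\eta \alpha_{l_0 k}(p_0)=\sum_{k=1}^\eta \alpha_{l_0 k}(p_0)^{r_0}<0.
\]
By Remark \ref{rem:an}, this is precisely $a_{l_0}(p_0)<0$.

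To invoke Theorem \ref{th:clam1}(III) I need a tuple $(m_1,\ldots,m_\varphi)\in\mathbb{N}^\varphi$ with $\prod_{l=1}^\varphi a_l(m_l)\in\mathbb{C}\setminus\mathbb{R}_{\ge 0}$. The natural choice is $m_{l_0}:=p_0$ and $m_l:=1$ for $l\ne l_0$; using $a_l(1)=1$ (again Remark \ref{rem:an}) the product collapses to $a_{l_0}(p_0)<0$, which lies in $\mathbb{C}\setminus\mathbb{R}_{\ge 0}$. Hence $f_{\vsig}\in\widehat{N\!D}$, as required.

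I do not anticipate a genuine obstacle here: the argument is a mild extension of the one-dimensional case already recorded in the excerpt, and the only new step is the coordinate-wise trivialization $m_l=1$ for $l\ne l_0$, enabled by the multiplicativity in Lemma \ref{lem:mdst} together with the identities $a_l(1)=1$ and $a_{l_0}(p_0)=\sum_k \alpha_{l_0 k}(p_0)$. The potentially subtle point---whether a negative $a_{l_0}(p_0)$ is actually attainable from a negative $\sum_k \alpha_{l_0 k}(p_0)^{r_0}$---is exactly what the $\{0,1,-1\}$ hypothesis resolves by collapsing odd and first powers and making even powers nonnegative.
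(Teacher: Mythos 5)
Your argument is correct and follows essentially the same route as the paper's own proof: reduce to showing $\widehat{I\!D^0}=\emptyset$, use the $\{0,1,-1\}$ hypothesis to collapse the offending odd power to the first power so that $a_{l_0}(p_0)=\sum_{k}\alpha_{l_0k}(p_0)<0$, and then apply part (III) of Theorem \ref{th:clam1} with the tuple $m_{l_0}=p_0$, $m_l=1$ otherwise (the paper writes this as $a_{l_0}(p_0)\prod_{l\ne l_0}a_l(1)<0$ and invokes Theorem \ref{th:dm1}). No gaps.
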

\begin{proof}
We only have to show $\widehat{I\!D^0} = \emptyset$ in this case. Suppose that $f_{\vsig} \not \in \widehat{I\!D}$, namely there exist $r_0 \in {\mathbb{N}}$,  $p_0 \in\Prime$ and $1 \le l_0 \le \varphi$ such that $\sum_{k=1}^\eta \alpha_{l_0k}(p_0)^{r_0} <0$. By the assumption $\alpha_{lk}(p) \in \{ 0,1,-1\}$, we have $\sum_{k=1}^\eta \alpha_{l_0k}(p_0)^{2j} \ge 0$ and $\sum_{k=1}^\eta \alpha_{l_0k}(p_0) = \sum_{k=1}^\eta \alpha_{l_0k}(p_0)^{2j-1}$ for any $j \in {\mathbb{N}}$. Thus we can assume that $\sum_{k=1}^\eta \alpha_{l_0k}(p_0) <0$. On the other hand, we have $a_{l}(p) = \sum_{k=1}^\eta \alpha_{lk}(p)$ for any $1 \le l \le \varphi$ and $p \in {\mathbb{P}}$ by Remark \ref{rem:an}. Hence, from Theorem \ref{th:dm1}, we obtain $f_{\vsig} \in \widehat{N\!D}$ since we have $a_{l_0}(p_0) \prod_{l\ne l_0}^\varphi a_l(1) <0$ when $\sum_{k=1}^\eta \alpha_{l_0k}(p_0) <0$. Therefore, one has $f_{\vsig} \in \widehat{N\!D}$ when $f_{\vsig} \not \in \widehat{I\!D}$, equivalently $f_{\vsig} \in \widehat{I\!D^0} \biguplus \widehat{N\!D}$, under the assumption $\alpha_{lk}(p) \in \{ 0,1,-1\}$. 
\end{proof}

\begin{example}\label{ex:mtheta1}
When $\eta=1$, one has $\widehat{I\!D^0} = \emptyset$. This is proved by a method similar to the one used in the proof of Example \ref{ex:eta1}. Suppose there exist $1 \le l_0 \le \varphi$, $r_0 \in {\mathbb{N}}$ and $p_0 \in\Prime$ such that $\alpha(p_0)^{r_0} <0$. By Lemma \ref{lem:clam1}, $\alpha_l(p_0)^r$ is real for any $l$ and $r$. Thus we can assume that $\alpha_{l_0}(p_0)<0$. One has $a_l(p)=\alpha_l(p)<0$ by Remark \ref{rem:an}. Therefore, we have $\widehat{I\!D^0} = \emptyset$ by using Theorem \ref{th:clam1}.
\end{example}

\begin{theorem}\label{th:emp2}
One has $\widehat{I\!D^0} = \emptyset$ if and only if $\eta=1,2$. 
\end{theorem}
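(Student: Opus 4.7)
The plan is to mirror the three-phase structure of Theorem \ref{th:emp1}: build examples for $\eta \ge 3$, invoke Example \ref{ex:mtheta1} for $\eta = 1$, and run a case analysis for $\eta = 2$.

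For $\eta \ge 3$ I would produce elements of $\widehat{I\!D^0}$ by reducing to the one-dimensional setting. Choose $\varphi = 1$ and an arbitrary non-zero $\vc_1 \in \rd$. Then $Z_E(\vs)$ depends on $\vs$ only through the scalar $\langle \vc_1, \vs\rangle$, so the criteria $\sum_{k=1}^{\eta} \alpha_{1k}(p)^r \ge 0$ and $\prod_{l=1}^{1} a_l(n_1) = a_1(n_1) \ge 0$ in Theorem \ref{th:clam1} collapse onto the one-dimensional criteria of Theorem \ref{th:cla1}. Example \ref{exa:q} (for $\eta = 3$) and the explicit $F_\eta$ constructed inside the proof of Theorem \ref{th:emp1} (for $\eta \ge 4$) therefore furnish multidimensional examples, yielding $\widehat{I\!D^0} \ne \emptyset$ in these cases.

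The heart of the argument is $\eta = 2$. Suppose for contradiction $f_{\vsig} \in \widehat{I\!D^0}$. Theorem \ref{th:dm1} forces $\prod_{l=1}^{\varphi} a_l(n_l) \ge 0$ for every tuple, and Lemma \ref{lem:clam1} then forces $\sum_{k=1}^{2} \alpha_{lk}(p)^r \in \R$ for all $r, p, l$. Theorem \ref{th:clam1}(I) supplies $r_0, p_0, l_0$ with $\sum_{k=1}^{2} \alpha_{l_0 k}(p_0)^{r_0} < 0$. Since the sum at $r = 1$ is real, at the prime $p_0$ either both $\alpha_{l_0 k}(p_0)$ are real, or they form a non-real conjugate pair $R_{p_0} e^{\pm {\rm i}\theta_{p_0}}$ with $\theta_{p_0} \in (0,\pi)$; the remaining sub-case from the one-dimensional proof of Theorem \ref{th:emp1}, where $\alpha_{l_0 1}(p_0) + \alpha_{l_0 2}(p_0)$ itself is non-real, is ruled out by Lemma \ref{lem:clam1}. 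In the real sub-case, Lemma \ref{lem:eta2real}'s argument gives $\alpha_{l_0 1}(p_0) + \alpha_{l_0 2}(p_0) < 0$, so that $a_{l_0}(p_0) < 0$ by Remark \ref{rem:an}. In the conjugate-pair sub-case, the sine computation from Theorem \ref{th:emp1} produces $j_0$ with $a_{l_0}(p_0^{j_0 - 1}) < 0$. In both scenarios I set $n_{l_0}$ to that bad integer and $n_l = 1$ for $l \neq l_0$; since $a_l(1) = 1$ by Remark \ref{rem:an}, the product $\prod_l a_l(n_l)$ comes out strictly negative, contradicting Theorem \ref{th:dm1}.

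I expect no serious technical obstacle: the delicate sine argument is already carried out in the proof of Theorem \ref{th:emp1} and transfers verbatim once Lemma \ref{lem:clam1} excises the non-real-sum sub-case. The only genuinely new ingredient is the trivial ``inflation'' $n_l = 1$ for $l \neq l_0$, which exploits $a_l(1) = 1$ to upgrade a bad single coordinate into a bad multidimensional tuple; this is bookkeeping rather than substance, and is exactly the step that fails if one were to push $\varphi \ge 2$ into the $\eta \ge 3$ construction in the wrong direction.
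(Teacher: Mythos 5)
Your proposal is correct and follows the paper's proof essentially verbatim: the $\eta\ge 3$ examples are imported through the $\varphi=1$ reduction, $\eta=1$ is Example \ref{ex:mtheta1}, and for $\eta=2$ you locate a single index $l_0$ and prime $p_0$ with $a_{l_0}(p_0^{r})\in{\mathbb{C}}\setminus{\mathbb{R}}_{\ge 0}$ and pad the tuple with $a_l(1)=1$, which is exactly the paper's argument (and your explicit two-subcase treatment is in fact cleaner than the paper's appeal to ``modifying the proof of Theorem \ref{th:emp1}''). One small touch-up: the dichotomy ``both real or a non-real conjugate pair'' does not follow from the reality of the $r=1$ power sum alone, but it does follow once you also use the $r=2$ power sum (both supplied by Lemma \ref{lem:clam1}), since then $\alpha_{l_0 1}(p_0)+\alpha_{l_0 2}(p_0)$ and $\alpha_{l_0 1}(p_0)\alpha_{l_0 2}(p_0)$ are real, so the two coefficients are roots of a real quadratic.
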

\begin{proof}
We only have to show $\widehat{I\!D^0} = \emptyset$ when $\eta=2$ from Example \ref{ex:mtheta1} and the first part of  the proof of Theorem \ref{th:emp1} which implies $\widehat{I\!D^0} \ne \emptyset$ if $\eta \ge 3$. Assume that $f_{\vsig} \not \in \widehat{I\!D}$, namely, there exist $j_0 \in {\mathbb{N}}$, $p_0 \in \Prime$ and $1 \le l_0 \le \varphi$ such that $\sum_{k=1}^2 \alpha_{l_0k}(p_0)^{j_0} \in {\mathbb{C}} \setminus {\mathbb{R}}_{\ge 0}$. From the proof of Lemma \ref{lem:eta2real}, this is equivalent to that there are $p_0 \in \Prime$ and $1 \le l_0 \le \varphi$ satisfying $\sum_{k=1}^2 \alpha_{l_0k}(p_0)$ is negative or non-real. Then we can show that there exists $r_0 \in {\mathbb{N}}$ which satisfies $a_{l_0}(p_0^{r_0}) \in {\mathbb{C}} \setminus {\mathbb{R}}_{\ge 0}$ by modifying the proof of Theorem \ref{th:emp1}. Hence we have $a_{l_0}(p_0^{r_0}) \prod_{l\ne l_0}^\varphi a_l(1)$ is negative or non-real since $a_l(1)=1$ for all $1 \le l \le \varphi$. Therefore, we obtain $f_{\vsig} \in \widehat{N\!D}$ from Theorem \ref{th:clam1} (III). 
\end{proof}

\subsection{Some linearly dependent cases}
In this subsection, we consider the case $a_k \vc_{k_1} = b _k \vc_{k_2}$, where $k_1 \ne k_2$ and $a_k$ and $b_k$ are some positive integers. For simplicity, we treat the case $\varphi =2$, $\vc_1 = \gamma_1\vc$ and $\vc_2 = \gamma_2\vc$, where $\vc$ is a non-zero ${\mathbb{R}}^d$-valued vector and $\gamma_1,\gamma_2$ are natural numbers. Let $\alpha_l(p) \in {\mathbb{C}}$ and $|\alpha_l(p)| \le 1$ for $l=1,2$ and $p \in {\mathbb{P}}$. Then we have
\begin{equation}\label{eq:bunkai}
\bigl(1-\alpha_l (p) p^{-\langle \vc_l, \vs \rangle}\bigr) = \prod_{k=1}^{\gamma_l} 
\bigl(1-\alpha_l (p)^{1/\gamma_l} \exp (2 \pi {\rm{i}} k/\gamma_l) p^{-\langle \vc, \vs \rangle}\bigr),
\end{equation}
for $l=1,2$ and $p \in {\mathbb{P}}$. Therefore it holds that
\begin{equation*}
\begin{split}
\prod_p \prod_{l=1}^2 \bigl(1-\alpha_l (p) p^{-\langle \vc_l, \vs \rangle}\bigr)^{-1} =
\prod_p \prod_{l=1}^2 \prod_{k_l=1}^{\gamma_l} 
\bigl(1-\alpha_l (p)^{1/\gamma_l} \exp (2 \pi {\rm{i}} k_l/\gamma_l) p^{-\langle \vc, \vs \rangle}\bigr)^{-1}.
\end{split}
\end{equation*}
The right-hand side of the formula above is written by (\ref{eq:defpe3}) since $|\exp (2 \pi {\rm{i}} k_l/\gamma_l)|=1$. Hence, the case $a \vc_1 = b \vc_2$, where $a$ and $b$ are some positive integers, is reduce to the case treated in Section 3. 

The following examples (i) and (ii) are appeared in \cite[Example 4.2 (ii)]{ANPE}. We give a simple proof here for (i) and (ii). Example (iii) is a completely new one. 
\begin{example}\label{exa:123}
Let $L(s)$ be the function given in (\ref{eq:defLep}) or (\ref{eq:defLds}). For $\sigma >1$, \\
$(i)$ $\zeta (s)^2 L(2s)$ generates an infinitely divisible characteristic function,\\
$(ii)$ $L(s) \zeta (2s)$ does not generate even a characteristic function,\\
$(iii)$ $\zeta (s) L(2s)$ generates a quasi-infinitely divisible characteristic function.
\end{example}

\begin{proof}
$(i)$ From the view of (\ref{eq:bunkai}), we have
$$
\zeta (s)^2 L(2s) = \prod_p (1-p^{-s})^{-2} \times \prod_{p \,:\, {\rm{odd}}} 
\Bigl( 1-(-1)^{\frac{p-1}{4}}p^{-s} \Bigr)^{-1} \Bigl( 1+(-1)^{\frac{p-1}{4}}p^{-s} \Bigr)^{-1} .
$$
Thus we can take $\eta=4$, $\vc =1$, $\alpha_1(2)=\alpha_2(2)=1$, $\alpha_3(2)=\alpha_4(2)=0$, $\alpha_1(p)=\alpha_2(p)=1$, $\alpha_3(p)= (-1)^{\frac{p-1}{4}}$ and $\alpha_4(p)= -(-1)^{\frac{p-1}{4}}$, for $p>3$ in (\ref{eq:defpe3}). Then we have
\begin{equation*}
\begin{split}
&\alpha_1 (p)^{2j-1} + \alpha_2 (p)^{2j-1} + \alpha_3 (p)^{2j-1} + \alpha_4 (p)^{2j-1}
= 2+ (-1)^{(2j-1)\frac{p-1}{4}}- (-1)^{(2j-1)\frac{p-1}{4}} =2 ,\\
&\alpha_1 (p)^{2j} + \alpha_2 (p)^{2j} + \alpha_3 (p)^{2j}  + \alpha_4 (p)^{2j}
= 2+ (-1)^{j\frac{p-1}{2}} + (-1)^{j\frac{p-1}{2}} = 0 \mbox{ or } 4 .
\end{split}
\end{equation*}
Hence $\zeta (s)^2 L(2s)$ generates an infinitely divisible characteristic function by Theorem \ref{th:cla1}. 

$(ii)$ By using  (\ref{eq:bunkai}), we have
$$
\zeta (2s) L(s) = \prod_p (1-p^{-s})^{-1}  (1+p^{-s})^{-1} \times \prod_{p \,:\, {\rm{odd}}} 
\Bigl( 1-(-1)^{\frac{p-1}{2}}p^{-s} \Bigr)^{-1} .
$$
Hence we can take $\eta=3$, $\vc =1$, $\alpha_1(2)=-\alpha_2(2)=1$, $\alpha_3(2)=0$, $\alpha_1(p)=-\alpha_2(p)=1$, and $\alpha_3(p)= (-1)^{\frac{p-1}{2}}$, for $p>3$. In this case one has
$$
a(p) = \alpha_1(p)+\alpha_2(p)+\alpha_3(p) = 1 -1 + (-1)^{\frac{p-1}{2}} = (-1)^{\frac{p-1}{2}}.
$$
When $p \equiv 3 \mod 4$, we have $(-1)^{\frac{p-1}{2}}=-1$. Therefore $L(s) \zeta (2s)$ does not generate even a characteristic function by (III) of Theorem \ref{th:cla1}.

$(iii)$ From (\ref{eq:bunkai}), it holds that
$$
\zeta (s) L(2s) = \prod_p (1-p^{-s})^{-1} \times \prod_{p \,:\, {\rm{odd}}} 
\Bigl( 1-(-1)^{\frac{p-1}{4}}p^{-s} \Bigr)^{-1} \Bigl( 1+(-1)^{\frac{p-1}{4}}p^{-s} \Bigr)^{-1} .
$$
Thus we can take $\eta=3$, $\vc =1$, $\alpha_1(2)=1$, $\alpha_2(2)=\alpha_3(2)=0$, $\alpha_1(p)=1$, $\alpha_2(p)= (-1)^{\frac{p-1}{4}}$ and $\alpha_3 (p)= -(-1)^{\frac{p-1}{4}}$, for $p>3$ in (\ref{eq:defpe3}). Then we have
\begin{equation*}
\begin{split}
&\alpha_1 (p)^{2j-1} + \alpha_2 (p)^{2j-1} + \alpha_3 (p)^{2j-1} 
= 1+ (-1)^{(2j-1)\frac{p-1}{4}}- (-1)^{(2j-1)\frac{p-1}{4}} =1 ,\\
&\alpha_1 (p)^{2j} + \alpha_2 (p)^{2j} + \alpha_3 (p)^{2j} 
= 1+ (-1)^{j\frac{p-1}{2}} + (-1)^{j\frac{p-1}{2}} = -1 \mbox{ or } 3 .
\end{split}
\end{equation*}
Especially, one has $\alpha_1 (p)^{2j} + \alpha_2 (p)^{2j} + \alpha_3 (p)^{2j}=-1$ when $p \equiv 3 \mod 4$ and $j$ is odd. On the other hand, we have
$$
\zeta (s) L(2s) = \prod_p \frac{1+p^{-s}}{1-p^{-2s}} \times \prod_{p \,:\, {\rm{odd}}} 
\Bigl( 1-(-1)^{\frac{p-1}{2}}p^{-2s} \Bigr)^{-1} = \zeta (2s) L(2s) \prod_p (1+p^{-s}). 
$$
It is well-known that (see for example \cite[(1.2.7)]{Tit})
$$
\prod_p (1+p^{-s}) = \prod_p \frac{1-p^{-2s}}{1-p^{-s}} =\frac{\zeta (s)}{\zeta (2s)} = 
\sum_{n=1}^\infty \frac{|\mu (n)|}{n^s},
$$
where $\mu (1)=1$, $\mu (n) =(-1)^j$ if $n$ is the product of $j$ different primes, and $\mu (n)=0$ if $n$ contains any factor to a power higher than the first. Thus we have
$$
\zeta (s) L(2s) = \sum_{n_1=1}^\infty \frac{|\mu (n_1)|}{n_1^s} \sum_{n_2=1}^\infty \frac{a(n_2)}{n_2^s}, \qquad
a(n_2) := 
\begin{cases}
a^\# (n_2) & n_2 = j^2, \quad j=1,2,3,\ldots,\\
0 & \mbox{otherwise},
\end{cases}
$$
where $a^\# (n)$ is defined by (\ref{eq:ash}). By well known fact relates products of Dirichlet series with the Dirichlet convolution of their coefficients (see for example \cite[Theorem 11.5]{Apo}), it holds that
$$
\zeta (s) L(2s) = \sum_{n=1}^\infty \frac{A(n)}{n^s}, \qquad A(n) := \sum_{m|n} |\mu (m)| a(n/m) .
$$
Obviously, we have $A(n)\ge 0$ since $|\mu(n)|\ge 0$ and $a^\# (n) \ge 0$ for any $n \in {\mathbb{N}}$. Therefore, $\zeta (s) L(2s)$ generates a quasi-infinitely divisible but not infinitely divisible characteristic function by (II) of Theorem \ref{th:cla1}. 
\end{proof}

Let $p$ be a prime number, $\sigma_1, \sigma_2 >0$,
\begin{equation*}
\begin{split}
&g_p^\# (\vsig,\vt) := 
\bigl( 1-p^{-(\sigma_1+{\rm{i}}t_1)} \bigr)^{-1} \bigl( 1-p^{-(\sigma_2+{\rm{i}}t_2)} \bigr)^{-1}, 
\qquad g_p^\# (\sigma,t) := \bigl( 1-p^{-(\sigma+{\rm{i}}t)} \bigr)^{-2}, \\ 
&g_p^* (\vsig,\vt) := \bigl( 1+p^{-(\sigma_1+{\rm{i}}t_1)-(\sigma_2+{\rm{i}}t_2)} \bigr)^{-1},
\qquad g_p^* (\sigma,t) := \bigl( 1+p^{-2(\sigma+{\rm{i}}t)} \bigr)^{-1}, 
\end{split}
\end{equation*}
\begin{equation*}
\begin{split}
&G_p^\# (\vsig,\vt) := g_p^\# (\vsig,\vt) / g_p^\# (\vsig,{\Vec{0}}), \qquad
G_p^* (\vsig,\vt) := g_p^* (\vsig,\vt) / g_p^* (\vsig,{\Vec{0}}), \\
&G_p^\# (\sigma,t) := g_p^\# (\sigma,t) / g_p^\# (\sigma,0), \qquad \,
G_p^* (\sigma,t) := g_p^* (\sigma,t) / g_p^* (\sigma,0).
\end{split}
\end{equation*}
We can regard $G_p^\# (\sigma,t)$ and $G_p^* (\sigma,t)$ as $G_p^\# (\vsig,\vt)$ and $G_p^* (\vsig,\vt)$ with $\sigma :=\sigma_1 = \sigma_2$ and $t:= t_1=t_2$. We have to emphasize that $G_p^\#G_p^* (\vsig,\vt) \in \widehat{I\!D^0}({\mathbb{R}}^2)$ but $G_p^\# G_p^*(\sigma,t) \in \widehat{I\!D} ({\mathbb{R}})$ in the following example. Therefore, we can say that the linearly dependent case and linearly independent case are completely different in this example. 
\begin{example}\label{ex:FE}
We have the following.\\
$(iii)$ $G_p^\# (\vsig,\vt) \in \widehat{I\!D} ({\mathbb{R}}^2)$, $G_p^* (\vsig,\vt) \in \widehat{N\!D}({\mathbb{R}}^2)$ and $G_p^\#G_p^* (\vsig,\vt) \in \widehat{I\!D^0}({\mathbb{R}}^2)$.\\
$(iv)$ $G_p^\# (\sigma,t) \in \widehat{I\!D} ({\mathbb{R}})$, $G_p^* (\sigma,t) \in \widehat{N\!D} ({\mathbb{R}})$ and $G_p^\# G_p^*(\sigma,t) \in \widehat{I\!D} ({\mathbb{R}})$. 
\end{example}
\begin{proof}
The example $(iii)$ coincides with \cite[Theorem 2.1]{AN12q}. Thus we only have to show $(iv)$. One has $G_p^\# (\sigma,t) \in \widehat{I\!D} ({\mathbb{R}})$ by Theorem \ref{th:id1}. We can see that $G_p^* (\sigma,t) \in \widehat{N\!D} ({\mathbb{R}})$ by Theorem \ref{th:cla1} $(III)$ and $(1+p^{2(\sigma+{\rm{i}}t)})^{-1} = \sum_{n=0}^\infty (-1)^n p^{-2(\sigma+{\rm{i}}t)}$. We can obtain $G_p^\# G_p^*(\sigma,t) \in \widehat{I\!D} ({\mathbb{R}})$ by $1+p^{2(\sigma+{\rm{i}}t)} = (1+{\rm{i}}p^{\sigma+{\rm{i}}t})(1-{\rm{i}}p^{\sigma+{\rm{i}}t})$ and Theorem \ref{th:id1} (see also Example \ref{ex:id1} $(ii)$). 
\end{proof}

\section{Applications to analytic number theory}
As applications of Multidimensional $\eta$-tuple $\varphi$-rank compound Poisson zeta distributions on ${\mathbb{R}}^d$ to analytic number theory, we investigate the value-distribution of zeta and $L$-functions (see \cite{AN12q}, \cite{AY} and \cite{Nakamura12} for applications to probability theory). 

In the second decade of the twentieth century, Harald Bohr studied on the value-distribution of the Riemann zeta function by applying diophantine, geometric, and probabilistic methods. In recent years, there are many researches on the value-distribution of zeta functions in probabilistic view (see for example Laurin\v cikas \cite{Lau}, Matsumoto \cite{Mas}, Steuding \cite{Steuding1}). In these studies, the Riemann zeta function and Dirichlet $L$-functions are  treated similarly. However, from Theorem \ref{th:cla1}, the Riemann zeta function $\zeta (s)$ can generates a characteristic function but the Dirichlet $L$-function $L(s)$ defined by (\ref{eq:defLep}) or (\ref{eq:defLds}) can not. In this section, we show that the value-distribution of these zeta and $L$-functions in the regions of absolute convergence are different by the facts above. 

\subsection{Almost periodicity and self-approximation}
Bohr \cite{Bohr} proved that every Dirichlet series $f(s)$, having a finite abscissa of absolute convergence $\sigma_a$ is almost periodic in the half-plane $\sigma > \sigma_a$; i.e., for any given $\delta$ and $\varepsilon$, there exists a length $l:= l(f,\delta,\varepsilon)$ such that every interval of length $l$ contains a number $\tau$ for which
$$
\bigl| f(\sigma + {\rm{i}}t + {\rm{i}}\tau) - f(\sigma + {\rm{i}}t) \bigr| < \varepsilon
$$
holds for any $\sigma \ge \sigma_a+\delta$ and for all $t$. Moreover, Bohr showed if $\chi$ is non-principal, then the Riemann hypothesis for Dirichlet $L$-function $L(s,\chi)$ is equivalent to the almost periodicity in the sense of Bohr of $L(s,\chi)$ in $\sigma >1/2$. Recently, Girondo and Steuding \cite{GiSt} gave effective bounds for these lengths in the case of polynomial Euler products (\ref{eq:ap1}) with $\sigma >1$ by using Kronecker's approximation theorem (see also \cite[Section 8.2]{Steuding1}). 

More than 50 years later from Bohr's paper \cite{Bohr}, Bagchi in his Ph.~D.~Thesis, proved that the Riemann hypothesis is true if and only if the Riemann zeta function can be approximated by itself in the sense of universality (see \cite{BagchiZ}). In order to state it, we need some notation. Let $D:= \{s \in {\mathbb{C}} : 1/2 < \Re (s) < 1\}$. By ${\rm{meas}} \{A\}$ we denote the Lebesgue measure of the set $A$, and, for $T>0$, we use the notation $\nu_T \{ \ldots \} := T^{-1} {\rm{meas}} \{ \tau \in [0,T] : \, \ldots \}$ where in place of dots some condition satisfied by $\tau$ is to be written. Then we have the following; The Riemann hypothesis is true if and only if, for any compact subset $K$ in the strip $D$ with connected complement and for any $\varepsilon> 0$, 
\begin{equation}
\liminf_{T \rightarrow \infty} \nu_T \Bigl\{ \max_{s \in K} \bigl| \zeta (s+{\rm{i}}\tau) - \zeta (s)\bigr| < \varepsilon\Bigr\} > 0.
\label{eq:seapprz}
\end{equation}
Note that it was shown in \cite{NaPaCorre}, the Riemann Hypothesis is also equivalent to the analogue of (\ref{eq:seapprz}) with $\zeta(s)$ replaced by the logarithm of the Riemann zeta function $\log \zeta (s)$.

Inspired by the work of Bagchi, Nakamura \cite{Nakamurasr1} showed the following property which might be called \textit{self-approximation} of the Riemann zeta function. For every algebraic irrational number $\beta \in\mathbb{R}$ or for almost all $\beta \in \mathbb{R}$, for every compact set $K\subset D$ with connected complement and every $\varepsilon>0$, one has
$$
\liminf_{T \rightarrow \infty} \nu_T \Bigl\{ \max_{s \in K} 
\bigl|\zeta(s+{\rm{i}}\tau)-\zeta(s+{\rm{i}}\beta\tau) \bigr| \Bigr\} < \varepsilon.
$$
Afterwards, Pa\'nkowski \cite{PankowskiRec} showed the self-approximation above for any irrational number $\beta$. Garunk\v{s}tis \cite{Garu} and Nakamura \cite{Nakamura3} investigated the self-approximation of the Riemann zeta function for non-zero rational numbers, independently. Unfortunately, the papers \cite{Garu} and \cite{Nakamura3} contain a gap in the proof of the main theorem, so actually their methods work only for $\log \zeta (s)$. The detail on this matter was presented in \cite{NaPaCorre}, where Nakamura and Pa\'nkowski partially filled this gap and prove the self-approximation of $\zeta(s)$ for $d=a/b$, where $a,b \in {\mathbb{N}}$ with $|a-b|\ne 1$ and $\gcd(a,b)=1$. Finally, Pa\'nkowski \cite{PanN16} proved the self-approximation of $\zeta(s)$ for every rational $d\ne 0, \pm 1$. Consequently, we have the following statement.
\begin{proposition}
For any $0 \ne \beta \in {\mathbb{R}}$, for any compact subset $K$ in the strip $D$ with connected complement and for any $\varepsilon> 0$, it holds that
\begin{equation}
\liminf_{T \rightarrow \infty} \nu_T \Bigl\{ \max_{s \in K} \bigl| \zeta (s+{\rm{i}}\tau) - 
\zeta (s+{\rm{i}}\beta\tau)\bigr| < \varepsilon\Bigr\} > 0.
\label{eq:seapplogrz}
\end{equation}
Furthermore, the Riemann zeta function in (\ref{eq:seapplogrz}) can be replaced by $\log \zeta$
\end{proposition}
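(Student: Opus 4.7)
The plan is to establish \eqref{eq:seapplogrz} via a case analysis on the arithmetic nature of $\beta$, combining the self-approximation results recalled above. The trivial case $\beta = 1$ gives $\nu_T\{\ldots\} \equiv 1$. For $\beta \in \mathbb{R} \setminus \mathbb{Q}$, the statement is precisely Pa\'nkowski's theorem from \cite{PankowskiRec}, refining Nakamura \cite{Nakamurasr1}. At the heart of that argument is the observation that $\{\log p : p \in \Prime\} \cup \{\beta\log p : p \in \Prime\}$ is linearly independent over $\mathbb{Q}$ when $\beta$ is irrational; Kronecker's theorem then applies to the joint flow $\tau \mapsto ((p^{-{\rm i}\tau})_p, (p^{-{\rm i}\beta\tau})_p)$, and a Voronin-type joint universality argument for the pair $(\zeta(s+{\rm i}\tau), \zeta(s+{\rm i}\beta\tau))$ with common target $\zeta(s)$ (after truncation to a Dirichlet polynomial) yields the positive density.

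For rational $\beta \ne 0, \pm 1$, the conclusion is exactly the theorem of Pa\'nkowski \cite{PanN16}. I regard this as the main obstacle of the entire proof: rationality of $\beta$ destroys the $\mathbb{Q}$-linear independence used in the irrational case and precludes a direct appeal to Kronecker's theorem. Pa\'nkowski circumvents this by constructing approximating Dirichlet polynomials whose parameters can be adjusted independently of those of the companion polynomial, applying Kronecker only to a restricted set of primes free of the rational obstruction; the partial result of \cite{NaPaCorre} covering $\beta = a/b$ with $|a-b|\ne 1$ and $\gcd(a,b) = 1$ is subsumed.

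The remaining case $\beta = -1$ is not formally covered by \cite{PanN16}, and I would handle it directly using Voronin's universality together with the reflection identity $\zeta(s - {\rm i}\tau) = \overline{\zeta(\bar s + {\rm i}\tau)}$. Enlarge $K$ to a compact symmetric set $K^{*} = K \cup \bar K$ with connected complement (possible since $K \subset D$ and $D$ is symmetric under $s \mapsto \bar s$), and choose a non-vanishing target $f$ on $K^{*}$ that is continuous there, analytic in the interior, and satisfies $f(\bar s) = \overline{f(s)}$. Voronin yields a positive density of $\tau$ for which $\max_{s \in K^{*}}|\zeta(s+{\rm i}\tau) - f(s)| < \varepsilon/2$; evaluating this bound at $\bar s$ and applying the reflection identity gives $|\zeta(s-{\rm i}\tau) - f(s)| < \varepsilon/2$ for $s \in K$, and the triangle inequality closes the case.

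Finally, the extension to $\log\zeta$ in place of $\zeta$ follows from the logarithmic analogues of the same arguments in all cases; these are in fact easier because Voronin's universality for $\log\zeta$ does not require the target to be non-vanishing, so the $\beta = -1$ step works verbatim with any suitable target $f$ satisfying the symmetry $f(\bar s) = \overline{f(s)}$, and the cited papers already state their results at the level of $\log\zeta$.
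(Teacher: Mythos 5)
Your overall strategy matches the paper exactly: the paper offers no proof of this proposition at all, presenting it as the cumulative consequence of the cited works (\cite{Nakamurasr1}, \cite{PankowskiRec}, \cite{NaPaCorre}, \cite{PanN16}), and your case split (irrational $\beta$ from \cite{PankowskiRec}, rational $\beta \ne 0,\pm 1$ from \cite{PanN16}, $\beta=1$ trivial) is precisely that citation chain. Your explicit treatment of $\beta=-1$ via the reflection $\zeta(s-{\rm i}\tau)=\overline{\zeta(\bar s+{\rm i}\tau)}$ and universality on a symmetrized compact set is a genuine addition --- the paper's narrative silently skips this case --- and the argument is sound in outline. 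Two points need repair, though. First, the ``heart'' you ascribe to the irrational case is false as stated: $\{\log p\}_{p}\cup\{\beta\log p\}_{p}$ is \emph{not} linearly independent over $\mathbb{Q}$ for every irrational $\beta$; for instance $\beta=\log 3/\log 2$ gives $\beta\log 2=\log 3$, which is exactly the obstruction recorded in Remark \ref{rem:idm1} of this paper. Linear independence does hold for algebraic irrational $\beta$ (via Baker's theorem, cf.\ Proposition \ref{pro:abaker}), and the whole point of \cite{PankowskiRec} is to handle the remaining transcendental ratios of logarithms; since you ultimately defer to \cite{PankowskiRec} for the result, this does not break the proof, but the mechanism you describe is not the one that makes that theorem true. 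Second, $K^{*}=K\cup\bar K$ need not have connected complement even when $K$ does: a ``C''-shaped $K$ straddling the real axis can close up into an annulus under $s\mapsto\bar s$. Replace $K^{*}$ by any closed axis-symmetric rectangle contained in $D$ and containing $K$; this always exists, has connected complement, and the rest of your $\beta=-1$ argument (including the $\log\zeta$ variant, where the non-vanishing hypothesis is indeed unnecessary) then goes through.
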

Recall that we could prove the Riemann hypothesis if (\ref{eq:seapplogrz}) with $\beta=0$ would be true. The proposition above should be compared with probabilistic arguments for the Riemann hypothesis by Denjoy \cite{Denjoy} or Helson \cite{Helson} (see also \cite{Nakamurak}). 

Recently, Nakamura and Pa\'nkowski \cite{NaPaAu} investigated for which parameters $\lambda \in{\mathbb{C}}$ and $\beta \in {\mathbb{R}}$, the inequality 
\begin{equation}
\bigl|\zeta(s+\lambda+{\rm{i}}\beta \tau)-\zeta(s+{\rm{i}}\tau) \bigr| < \varepsilon \quad \mbox{or} \quad 
\bigl|\log \zeta(s+\lambda+{\rm{i}}\beta \tau)-\log \zeta(s+{\rm{i}}\tau) \bigr| < \varepsilon
\label{eq:zelogze}
\end{equation}
holds, assuming that $K$ is a compact set lying in the right half of the critical strip $D$ or in the half plane of absolute convergence $\sigma >1$.

\subsection{One dimensional case}
In this subsection, for simplicity, we consider almost periodicity and self-approximation of the following zeta or $L$-functions expressed as
\begin{equation}
{\mathcal{L}}(s)= \sum_{n=1}^\infty \frac{a(n)}{n^s} = 
\prod_p \prod_{k=1}^\eta \Bigl( 1-\frac{\alpha_k(p)}{p^s} \Bigr)^{-1} , \qquad \sigma>1,
\label{eq:ap1}
\end{equation}
where $\alpha_k(p) \in {\mathbb{C}}$, $|\alpha_k(p)|\le 1$, $1 \le k \le \varphi$. Recall $a(n)$ is defined by (\ref{eq:defsr3}). From the Euler product, the function ${\mathcal{L}}(s)$ does not vanish in the region of absolute convergence $\sigma >1$. On the other hand, the following lemma is well-known in probability theory. 
\begin{lemma}\label{lem:chawk}
Let $t_1,t_2 \in {\mathbb{R}}$, and $f(t)$ be a characteristic function on ${\mathbb{R}}$. Then we have
\begin{equation}
\bigl|f(t_1)-f(t_2)\bigr|^2 \le 4 \bigl| 1-f(t_1-t_2)\bigr|.
\label{eq:chara}
\end{equation}
\end{lemma}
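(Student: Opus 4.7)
The plan is to use the standard probabilistic representation $f(t)=\int_{\R} e^{{\rm i}tx}\mu(dx)$, where $\mu$ is the probability measure whose characteristic function is $f$, and then reduce everything to an expectation against $\mu$. The key algebraic trick is the factorization
\begin{equation*}
e^{{\rm i}t_1 x}-e^{{\rm i}t_2 x} = e^{{\rm i}t_2 x}\bigl(e^{{\rm i}(t_1-t_2)x}-1\bigr),
\end{equation*}
which immediately gives $|e^{{\rm i}t_1 x}-e^{{\rm i}t_2 x}| = |e^{{\rm i}(t_1-t_2)x}-1|$ since $|e^{{\rm i}t_2 x}|=1$.

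First I would write $f(t_1)-f(t_2)=\int_{\R}(e^{{\rm i}t_1 x}-e^{{\rm i}t_2 x})\mu(dx)$ and apply the Cauchy--Schwarz inequality (or, equivalently, Jensen's inequality for the convex function $u\mapsto u^2$), using $\mu(\R)=1$, to obtain
\begin{equation*}
|f(t_1)-f(t_2)|^2 \le \int_{\R}\bigl|e^{{\rm i}(t_1-t_2)x}-1\bigr|^2\mu(dx).
\end{equation*}
Next I would expand $|e^{{\rm i}\theta}-1|^2=(e^{{\rm i}\theta}-1)(e^{-{\rm i}\theta}-1)=2-2\cos\theta$ and integrate termwise; since $\int_{\R}\cos((t_1-t_2)x)\mu(dx)=\Re f(t_1-t_2)$, the right-hand side equals $2\bigl(1-\Re f(t_1-t_2)\bigr)$.

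Finally, since $|f(t)|\le 1$ for every characteristic function, the quantity $1-\Re f(t_1-t_2)$ is nonnegative, and one has the elementary bound
\begin{equation*}
0\le 1-\Re f(t_1-t_2) = \Re\bigl(1-f(t_1-t_2)\bigr) \le \bigl|1-f(t_1-t_2)\bigr|,
\end{equation*}
whence $|f(t_1)-f(t_2)|^2 \le 2|1-f(t_1-t_2)| \le 4|1-f(t_1-t_2)|$, which is \eqref{eq:chara}. The whole argument is routine manipulation of the Fourier-type integral representation, so I do not anticipate a genuine obstacle; the only place where one must be careful is justifying the Cauchy--Schwarz step with the (finite) probability measure $\mu$, but that is immediate since $\mu(\R)=1$ and the integrand is bounded.
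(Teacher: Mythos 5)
Your proof is correct. The paper itself gives no proof of this lemma, merely describing it as well-known in probability theory, and your argument is precisely the standard one: writing $f(t_1)-f(t_2)=\int_{\R}e^{{\rm i}t_2x}\bigl(e^{{\rm i}(t_1-t_2)x}-1\bigr)\mu(dx)$, applying Jensen (or Cauchy--Schwarz) against the probability measure $\mu$, and computing $\int_{\R}|e^{{\rm i}(t_1-t_2)x}-1|^2\mu(dx)=2\bigl(1-\Re f(t_1-t_2)\bigr)$. In fact you establish the sharper classical inequality $|f(t_1)-f(t_2)|^2\le 2\bigl(1-\Re f(t_1-t_2)\bigr)$, from which the stated bound with constant $4$ follows by the trivial estimates $1-\Re f\le|1-f|$ and $2\le 4$; every step is justified, so there is nothing to add.
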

By using this lemma, we have the next theorem.
\begin{theorem}\label{th:ap1}
Let $\sigma>1$ and $a(n) \ge 0$ for any $n \in {\mathbb{N}}$ in (\ref{eq:ap1}). Then we have 
\begin{equation}
\bigl|{\mathcal{L}}(\sigma+{\rm{i}}t_1)-{\mathcal{L}}(\sigma+{\rm{i}}t_2)\bigr|^2 \le 
4 \bigl| {\mathcal{L}}(\sigma)\bigr| 
\bigl| {\mathcal{L}}(\sigma) - {\mathcal{L}}(\sigma+{\rm{i}}(t_1-t_2))\bigr|.
\label{eq:ap2}
\end{equation}
\end{theorem}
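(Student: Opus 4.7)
The plan is to reduce the inequality to the standard characteristic-function inequality stated in Lemma \ref{lem:chawk}. Since the hypothesis gives $a(n)\ge 0$ for all $n\in\N$, Theorem \ref{th:d1} (applied in the one-dimensional setting $d=1$, $\vc=1$, with $\varphi=1$ in (\ref{eq:defpe3})) tells us that the normalized function
\begin{equation*}
f_{\sigma}(t) := \frac{{\mathcal{L}}(\sigma+{\rm{i}}t)}{{\mathcal{L}}(\sigma)}
\end{equation*}
is a genuine characteristic function on $\R$. Note that ${\mathcal{L}}(\sigma)\neq 0$ in the region $\sigma>1$ by Lemma \ref{lem:EPc}, so this normalization makes sense, and in fact ${\mathcal{L}}(\sigma)$ is a positive real number (since all $a(n)\ge 0$ and $a(1)=1$).

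Next, I would simply apply Lemma \ref{lem:chawk} to $f_{\sigma}$ at the points $t_1$ and $t_2$, which yields
\begin{equation*}
\bigl| f_{\sigma}(t_1)-f_{\sigma}(t_2)\bigr|^2 \,\le\, 4\bigl|1-f_{\sigma}(t_1-t_2)\bigr|.
\end{equation*}
Substituting the definition of $f_{\sigma}$ and using $f_{\sigma}(0)=1={\mathcal{L}}(\sigma)/{\mathcal{L}}(\sigma)$ gives
\begin{equation*}
\frac{\bigl|{\mathcal{L}}(\sigma+{\rm{i}}t_1)-{\mathcal{L}}(\sigma+{\rm{i}}t_2)\bigr|^2}{|{\mathcal{L}}(\sigma)|^2} \,\le\, 4\cdot \frac{\bigl|{\mathcal{L}}(\sigma)-{\mathcal{L}}(\sigma+{\rm{i}}(t_1-t_2))\bigr|}{|{\mathcal{L}}(\sigma)|}.
\end{equation*}
Multiplying both sides by $|{\mathcal{L}}(\sigma)|^2$ produces exactly the inequality (\ref{eq:ap2}).

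There is essentially no obstacle: the entire content of the theorem is the conversion of Lemma \ref{lem:chawk} into a statement about ${\mathcal{L}}$, and Theorem \ref{th:d1} is precisely what legitimizes treating $f_{\sigma}$ as a characteristic function under the hypothesis $a(n)\ge 0$. The only point to keep in mind is to verify that the normalization is valid, i.e.\ that ${\mathcal{L}}(\sigma)\neq 0$ for $\sigma>1$, which is immediate from the absolute convergence of the Euler product (Lemma \ref{lem:EPc}).
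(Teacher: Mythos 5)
Your proof is correct and matches the paper's argument essentially verbatim: both establish that $f_{\sigma}={\mathcal{L}}(\sigma+{\rm{i}}t)/{\mathcal{L}}(\sigma)$ is a characteristic function under the hypothesis $a(n)\ge 0$ (the paper cites Theorem \ref{th:cla1}, you cite the equivalent Theorem \ref{th:d1}), then apply Lemma \ref{lem:chawk} and clear denominators. Your additional remark that ${\mathcal{L}}(\sigma)\neq 0$ by Lemma \ref{lem:EPc} is a harmless extra verification the paper leaves implicit.
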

\begin{proof}
The normalized function ${\mathcal{L}}(\sigma+{\rm{i}}t)/{\mathcal{L}}(\sigma)$ is a characteristic function on ${\mathbb{R}}$ by (II) of Theorem \ref{th:cla1} and the assumption $a(n) \ge 0$ for any $n \in {\mathbb{N}}$. Thus we can apply Lemma \ref{lem:chawk}. Hence it holds that
$$
\left| \frac{{\mathcal{L}}(\sigma+{\rm{i}}t_1)}{{\mathcal{L}}(\sigma)} - 
\frac{{\mathcal{L}}(\sigma+{\rm{i}}t_2)}{{\mathcal{L}}(\sigma)}\right|^2 \le 4 \left| 1 - 
\frac{{\mathcal{L}}(\sigma+{\rm{i}}(t_1-t_2))}{{\mathcal{L}}(\sigma)}\right|
$$
This inequality implies (\ref{eq:ap2}). 
\end{proof}

\begin{remark}
If $a(n) < 0$ for some $n \in {\mathbb{N}}$ in (\ref{eq:ap1}), the inequality (\ref{eq:ap2}) does not hold. For example, in the case of Dirichlet $L$-function $L(s)$ defined by (\ref{eq:defLds}), we have
$$
4 \bigl|L(\sigma)\bigr| \bigl| L(\sigma) - L(\sigma+{\rm{i}}(t_1-t_2))\bigr|
- \bigl|L(\sigma+{\rm{i}}t_1)-L(\sigma+{\rm{i}}t_2)\bigr|^2 =  -0.205831...
$$
for $\sigma=3/2$, $t_1=19.3$ and $t_2=82.9$ by Mathematica 8.0.
\end{remark}

The following corollary is a weaker version of \cite[Theorem 9.6]{Steuding1}. 
\begin{corollary}\label{cor:ap1}
Let $\sigma>1$ and $a(n) \ge 0$ for any $n \in {\mathbb{N}}$ in (\ref{eq:ap1}). Then for any $\varepsilon>0$, there exits $\tau \in {\mathbb{R}}$ such that
$$
\bigl| {\mathcal{L}} (\sigma + {\rm{i}}t + {\rm{i}}\tau) - {\mathcal{L}}(\sigma + {\rm{i}}t) \bigr| <\varepsilon 
\quad \mbox{for any } \,\, t \in {\mathbb{R}}.
$$
\end{corollary}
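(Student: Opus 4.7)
The backbone of the argument is that the right-hand side of inequality \eqref{eq:ap2} in Theorem \ref{th:ap1} depends only on the difference of its two imaginary arguments, not on $t$. Setting $t_1 = t + \tau$ and $t_2 = t$ in \eqref{eq:ap2} yields
$$
\bigl|{\mathcal{L}}(\sigma + {\rm{i}}t + {\rm{i}}\tau) - {\mathcal{L}}(\sigma + {\rm{i}}t)\bigr|^2 \le 4\,\bigl|{\mathcal{L}}(\sigma)\bigr|\,\bigl|{\mathcal{L}}(\sigma) - {\mathcal{L}}(\sigma + {\rm{i}}\tau)\bigr|,
$$
and the majorant is independent of $t$. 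Hence, uniformity in $t$ is free, and the corollary is reduced to finding a single $\tau \in {\mathbb{R}}$ (non-trivial, in the spirit of Bohr almost periodicity) for which $|{\mathcal{L}}(\sigma) - {\mathcal{L}}(\sigma + {\rm{i}}\tau)| < \varepsilon^2 / (4|{\mathcal{L}}(\sigma)|)$.

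To produce such a $\tau$, I would expand
$$
{\mathcal{L}}(\sigma) - {\mathcal{L}}(\sigma + {\rm{i}}\tau) = \sum_{n=1}^\infty \frac{a(n)}{n^\sigma}\bigl(1 - e^{-{\rm{i}}\tau \log n}\bigr),
$$
split the range at some $N = N(\sigma, \varepsilon)$, and use the two standard ingredients. First, absolute convergence of the Dirichlet series for $\sigma > 1$ together with $a(n) \ge 0$ gives $\sum_{n > N} a(n)/n^\sigma < \delta$ once $N$ is large enough, so the tail contribution is at most $2\delta$. Second, for the remaining finite sum over $n \le N$, apply Kronecker's approximation theorem to the $\mathbb{Q}$-linearly independent real numbers $\{\log p /(2\pi) : p \text{ prime},\ p \le N\}$: this produces arbitrarily large $\tau \in {\mathbb{R}}$ such that each $\tau \log p$ lies within $\delta'$ of an integer multiple of $2\pi$. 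Multiplicativity then forces $|1 - e^{-{\rm{i}}\tau \log n}| \le C_N \delta'$ uniformly for $n \le N$ (with a constant depending only on $N$ through $\Omega(n) \le \log_2 N$), so choosing $\delta'$ small makes the head-sum at most $\delta$ as well.

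The main obstacle is conceptually small: all the real work has already been absorbed into Theorem \ref{th:ap1}, whose proof in turn relies on the fact that $a(n) \ge 0$ places ${\mathcal{L}}(\sigma + {\rm{i}}t)/{\mathcal{L}}(\sigma)$ in the class $\widehat{I\!D^0} \cup \widehat{I\!D}$ via Theorem \ref{th:cla1}~(II), so that the characteristic-function inequality of Lemma \ref{lem:chawk} applies. What remains is only the routine Bohr-Kronecker approximation step, which could alternatively be replaced by a direct appeal to Bohr's classical almost-periodicity theorem for absolutely convergent Dirichlet series cited earlier in the section. No further axioms on the Euler factors beyond $a(n) \ge 0$ are needed, which is exactly why Corollary \ref{cor:ap1} fails for the Dirichlet $L$-function $L(s)$ defined in \eqref{eq:defLep}: the corresponding coefficients fail to be non-negative, so the key inequality \eqref{eq:ap2} itself breaks down.
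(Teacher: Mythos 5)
Your proof is correct and follows essentially the same route as the paper: reduce via Theorem \ref{th:ap1} to finding a single $\tau$ with $|{\mathcal{L}}(\sigma)-{\mathcal{L}}(\sigma+{\rm i}\tau)|$ small (the majorant being independent of $t$), and then produce $\tau$ by a Kronecker-type almost-periodicity argument with a tail cut from absolute convergence. The only cosmetic difference is that you apply Kronecker directly to the Dirichlet series $\sum_n a(n)n^{-\sigma}(1-e^{-{\rm i}\tau\log n})$, whereas the paper (via Lemma \ref{lem:last} and the reference to Karatsuba--Voronin) runs the same estimate on $\log{\mathcal{L}}$ over the Euler product; both hinge on Kronecker's theorem for $\{\log p\}$ with $p$ below a truncation level.
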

\begin{proof}
By Theorem \ref{th:ap1} and the assumption $a(n) \ge 0$, we only have to show that for any $\varepsilon'>0$, there exits $\tau \in {\mathbb{R}}$ such that $|{\mathcal{L}}(\sigma) - {\mathcal{L}}(\sigma+{\rm{i}}\tau)|<\varepsilon'$. This inequality is prove by the manner used in \cite[pp. 287--288]{KaVo} or the proof of Lemma \ref{lem:last} which will be appeared later. 
\end{proof}

The next corollary is a result similar to \cite[Theorem 3.7]{NaPaAu}. 
\begin{corollary}\label{cor:ap2}
Let $\sigma>1$ and $a(n) \ge 0$ for any $n \in {\mathbb{N}}$ in (\ref{eq:ap1}). Then for any $\lambda \in {\mathbb{R}}$, $1 \ne \beta \in {\mathbb{R}}$ and $\varepsilon>0$, there exits $t \in {\mathbb{R}}$ such that
$$
\bigl| {\mathcal{L}} (\sigma + {\rm{i}}\lambda + {\rm{i}}\beta t) - 
{\mathcal{L}}(\sigma + {\rm{i}}t) \bigr| < \varepsilon.
$$
\end{corollary}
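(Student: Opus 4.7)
The plan is to reduce the desired inequality to the almost-periodicity statement already obtained in Corollary \ref{cor:ap1}, using Theorem \ref{th:ap1} as a bridge. Specifically, I would apply Theorem \ref{th:ap1} with the choice $t_1 = \lambda + \beta t$ and $t_2 = t$, which gives
\[
\bigl|\mathcal{L}(\sigma+{\rm{i}}\lambda+{\rm{i}}\beta t) - \mathcal{L}(\sigma+{\rm{i}}t)\bigr|^2
\le 4 \bigl|\mathcal{L}(\sigma)\bigr|\,\bigl|\mathcal{L}(\sigma) - \mathcal{L}(\sigma+{\rm{i}}(\lambda+(\beta-1)t))\bigr|.
\]
Thus it suffices to make the second factor on the right-hand side arbitrarily small by a suitable choice of $t$.

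The key observation is that since $\beta \ne 1$, the map $t \mapsto \lambda + (\beta-1)t$ is a bijection of $\mathbb{R}$ onto itself. Therefore, given $\varepsilon'>0$, it suffices to find some $\tau \in \mathbb{R}$ with $|\mathcal{L}(\sigma) - \mathcal{L}(\sigma+{\rm{i}}\tau)| < \varepsilon'$, and then put $t := (\tau - \lambda)/(\beta - 1)$. Such a $\tau$ is supplied directly by Corollary \ref{cor:ap1}: applying that corollary with $t=0$ yields, for any $\varepsilon' > 0$, a real number $\tau$ such that $|\mathcal{L}(\sigma+{\rm{i}}\tau) - \mathcal{L}(\sigma)| < \varepsilon'$.

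Combining these two steps, for any prescribed $\varepsilon > 0$ I would first choose $\varepsilon' := \varepsilon^2 / (4|\mathcal{L}(\sigma)|)$, then produce $\tau$ with $|\mathcal{L}(\sigma) - \mathcal{L}(\sigma+{\rm{i}}\tau)| < \varepsilon'$ via Corollary \ref{cor:ap1}, and finally set $t := (\tau - \lambda)/(\beta - 1)$. Substituting back into the bound from Theorem \ref{th:ap1} gives $|\mathcal{L}(\sigma+{\rm{i}}\lambda+{\rm{i}}\beta t) - \mathcal{L}(\sigma+{\rm{i}}t)|^2 < 4|\mathcal{L}(\sigma)| \cdot \varepsilon' = \varepsilon^2$, completing the proof.

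There is essentially no serious obstacle here; the work has already been done in Theorem \ref{th:ap1} and Corollary \ref{cor:ap1}. The only things to verify are the harmless bijectivity of the affine map $t \mapsto \lambda + (\beta-1)t$ under the assumption $\beta \ne 1$, and that $\mathcal{L}(\sigma) \ne 0$ (which holds in the region of absolute convergence by Lemma \ref{lem:EPc}) so that the constant $\varepsilon'$ is well-defined. Note that the case $\beta = 1$ is genuinely excluded: it would require approximating $\mathcal{L}(\sigma + {\rm{i}}\lambda + {\rm{i}}t)$ by $\mathcal{L}(\sigma + {\rm{i}}t)$, which is a statement about translations by the fixed shift $\lambda$ rather than a rescaling in $t$, and the present argument has no leverage on it.
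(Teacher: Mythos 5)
Your proposal is correct and takes essentially the same route as the paper: both apply Theorem \ref{th:ap1} with $t_1=\lambda+\beta t$, $t_2=t$ and then reduce to making $\bigl|{\mathcal{L}}(\sigma)-{\mathcal{L}}(\sigma+{\rm{i}}(\lambda+(\beta-1)t))\bigr|$ small. The only (harmless) difference is in the last step: you exploit the bijectivity of $t\mapsto\lambda+(\beta-1)t$ to quote Corollary \ref{cor:ap1} at $t=0$, whereas the paper re-invokes the Kronecker-approximation argument (as in Lemma \ref{lem:last}) directly on the shifted target; since Corollary \ref{cor:ap1} is itself proved by that argument, the two are logically equivalent and there is no circularity.
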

\begin{proof}
By Theorem \ref{th:ap1} again, we only have to prove that for any $\varepsilon'>0$, there exits $t \in {\mathbb{R}}$ such that $|{\mathcal{L}}(\sigma) - {\mathcal{L}}(\sigma+{\rm{i}}\lambda + {\rm{i}}(\beta-1)\tau)|<\varepsilon'$. This inequality is also obtained by the manner used in \cite[pp. 287--288]{KaVo} or the proof of Lemma \ref{lem:last}. 
\end{proof}

When $\sum_{k=1}^\eta \alpha_k(p)^r \ge 0$ for any $r \in {\mathbb{N}}$ and $p \in {\mathbb{P}}$ in (\ref{eq:ap1}), we have the following theorem. We have to mention that this assumption is equivalent to the condition that the normalized function ${\mathcal{L}}(\sigma+it)/{\mathcal{L}}(\sigma)$ is a infinitely divisible characteristic function on ${\mathbb{R}}$ by (I) of Theorem \ref{th:cla1}.
\begin{theorem}\label{th:ap2}
Let $\sigma>1$ and $\sum_{k=1}^\eta \alpha_k(p)^r \ge 0$ for any $r \in {\mathbb{N}}$ and $p \in {\mathbb{P}}$ in (\ref{eq:ap1}). Then it holds that
\begin{equation}
\bigl|\log{\mathcal{L}}(\sigma+{\rm{i}}t_1)-\log{\mathcal{L}}(\sigma+{\rm{i}}t_2)\bigr|^2 \le 
4 \bigl| \log{\mathcal{L}}(\sigma)\bigr| 
\bigl| \log{\mathcal{L}}(\sigma) - \log{\mathcal{L}}(\sigma+{\rm{i}}(t_1-t_2))\bigr|.
\label{eq:ap3}
\end{equation}
\end{theorem}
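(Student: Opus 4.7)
The plan is to use the fact, provided by Theorem \ref{th:id1} (applied in one dimension with $\vc=1$), that under the hypothesis $\sum_{k=1}^\eta \alpha_k(p)^r \ge 0$ for all $r\in\mathbb{N}$ and $p\in\Prime$, the normalized function $f_\sigma(t):={\mathcal{L}}(\sigma+{\rm{i}}t)/{\mathcal{L}}(\sigma)$ is a \emph{compound Poisson} characteristic function on $\mathbb{R}$. Its finite L\'evy measure $N_\sigma$ is given by \eqref{eq:lm1}, and its total mass can be read off from the Dirichlet series expansion \eqref{eq:9.19}: summing the point masses gives
\[
c_\sigma:=N_\sigma(\mathbb{R})=\sum_p\sum_{r=1}^\infty\sum_{k=1}^\eta\frac{1}{r}\alpha_k(p)^r p^{-r\sigma}=\log{\mathcal{L}}(\sigma),
\]
which is a nonnegative real number by the hypothesis. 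If $c_\sigma=0$, every $\alpha_k(p)$ vanishes, so ${\mathcal{L}}\equiv 1$ and \eqref{eq:ap3} holds trivially; so assume $c_\sigma>0$.

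Next I would put $f_\sigma$ in the compound-Poisson form \eqref{CPcf}: setting $\rho_\sigma:=N_\sigma/c_\sigma$, the measure $\rho_\sigma$ is a probability distribution on $\mathbb{R}$ and
\[
\frac{{\mathcal{L}}(\sigma+{\rm{i}}t)}{{\mathcal{L}}(\sigma)}=\exp\bigl(c_\sigma(\widehat{\rho_\sigma}(t)-1)\bigr),
\]
so that taking the principal branch of the logarithm and solving yields
\[
\widehat{\rho_\sigma}(t)=1+\frac{\log{\mathcal{L}}(\sigma+{\rm{i}}t)-\log{\mathcal{L}}(\sigma)}{c_\sigma}.
\]
In particular $\widehat{\rho_\sigma}$ is a genuine characteristic function on $\mathbb{R}$.

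Now I would apply Lemma \ref{lem:chawk} to $\widehat{\rho_\sigma}$ with the arguments $t_1$ and $t_2$ to obtain
\[
\bigl|\widehat{\rho_\sigma}(t_1)-\widehat{\rho_\sigma}(t_2)\bigr|^2\le 4\bigl|1-\widehat{\rho_\sigma}(t_1-t_2)\bigr|.
\]
Substituting the explicit formula for $\widehat{\rho_\sigma}$, the left-hand side becomes $c_\sigma^{-2}|\log{\mathcal{L}}(\sigma+{\rm{i}}t_1)-\log{\mathcal{L}}(\sigma+{\rm{i}}t_2)|^2$ and the right-hand side becomes $4c_\sigma^{-1}|\log{\mathcal{L}}(\sigma)-\log{\mathcal{L}}(\sigma+{\rm{i}}(t_1-t_2))|$. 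Multiplying through by $c_\sigma^2$ and using $c_\sigma=|\log{\mathcal{L}}(\sigma)|$ delivers \eqref{eq:ap3}.

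The only delicate point is confirming $c_\sigma=\log{\mathcal{L}}(\sigma)$ rather than merely some implicit normalization: this follows directly from \eqref{eq:9.19} together with the explicit form of $N_\sigma$ in \eqref{eq:lm1}, and is the place where the positivity hypothesis $\sum_{k=1}^\eta\alpha_k(p)^r\ge 0$ is used (to make $N_\sigma$ a positive measure and $\log{\mathcal{L}}(\sigma)$ real and nonnegative). Once this identification is made, the argument is essentially the one used for Theorem \ref{th:ap1}, but applied to $\widehat{\rho_\sigma}$ instead of $f_\sigma$.
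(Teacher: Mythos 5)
Your proof is correct and follows essentially the same route as the paper's: both arguments reduce \eqref{eq:ap3} to the fact that $\log{\mathcal{L}}(\sigma+{\rm i}t)/\log{\mathcal{L}}(\sigma)$ is a characteristic function and then apply Lemma \ref{lem:chawk} and rescale by $|\log{\mathcal{L}}(\sigma)|$. The only cosmetic difference is how that fact is justified --- the paper reads off that $\log{\mathcal{L}}$ is a Dirichlet series with nonnegative coefficients and invokes Theorem \ref{th:d1}, whereas you identify the same probability measure as the normalized L\'evy measure $N_\sigma/N_\sigma(\mathbb{R})$ of the compound Poisson representation and verify $N_\sigma(\mathbb{R})=\log{\mathcal{L}}(\sigma)$ --- so the two proofs coincide in substance.
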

\begin{proof}
From (\ref{eq:9.19}), we have 
\begin{equation*}
\begin{split}
&\log{\mathcal{L}} (s) = \sum_p \sum_{r=1}^{\infty} \sum_{k=1}^\eta \frac{1}{r} \alpha_{k}(p)^r p^{-rs} =
\sum_{n=1}^\infty \frac{A(n)}{n^s}, \\ & A(n) :=
\begin{cases}
 \sum_{k=1}^\eta \alpha_{k}(p)^r/r & n=p^r,\\
0 & \mbox{otherwise}.
\end{cases}
\end{split}
\end{equation*}
Hence the normalized function $\log{\mathcal{L}} (\sigma+{\rm{i}}t) /\log{\mathcal{L}} (\sigma)$ is a characteristic function by (\ref{eq:gdsd}) or Theorem \ref{th:d1}, and the assumption $\sum_{k=1}^\eta \alpha_k(p)^r \ge 0$. Therefore, we have (\ref{eq:ap3}) by modifying the proof of Theorem \ref{th:ap1}. 
\end{proof}

\begin{remark}
If $\sum_{k=1}^\eta \alpha_k(p)^r < 0$ for some $r \in {\mathbb{N}}$ or $p \in {\mathbb{P}}$ in (\ref{eq:ap1}), the inequality (\ref{eq:ap3}) does not hold. For example, in the case of Dirichlet $L$-function $L(s)$ defined by (\ref{eq:defLep}), 
\begin{equation*}
\begin{split}
&4 \bigl|\log L(\sigma)\bigr| \bigl| \log L(\sigma) - \log L(\sigma+{\rm{i}}(t_1-t_2))\bigr|
- \bigl| \log L(\sigma+{\rm{i}}t_1)- \log L(\sigma+{\rm{i}}t_2)\bigr|^2 \\ &= -0.16818... <0
\end{split}
\end{equation*}
for $\sigma=3/2$, $t_1=19.3$ and $t_2=82.9$ by Mathematica 8.0.

Next consider the function 
$$
Z_Q (s) := \frac{1}{(1-2^{-s})(1-{\rm{i}}2^{-s})(1+{\rm{i}}2^{-s})} = \frac{1}{(1-2^{-s})(1+2^{-2s})} 
= \frac{1+2^{-s}}{1-2^{-4s}}
$$
introduced in Example \ref{exa:q}. Then the graph of 
\begin{equation*}
\begin{split}
Q(t) := & \, 4|\log Z_Q(1/3)| \bigl|\log Z_Q(1/3) - \log Z_Q(1/3+7{\rm{i}}) \bigr| \\ &\quad 
- \bigl|\log Z_Q(1/3+{\rm{i}}t) - \log Z_Q(1/3+(t+7){\rm{i}})\bigr|^2
\end{split}
\end{equation*}
is the following.
\begin{figure}[h]
\includegraphics[height=5cm, width=16cm]{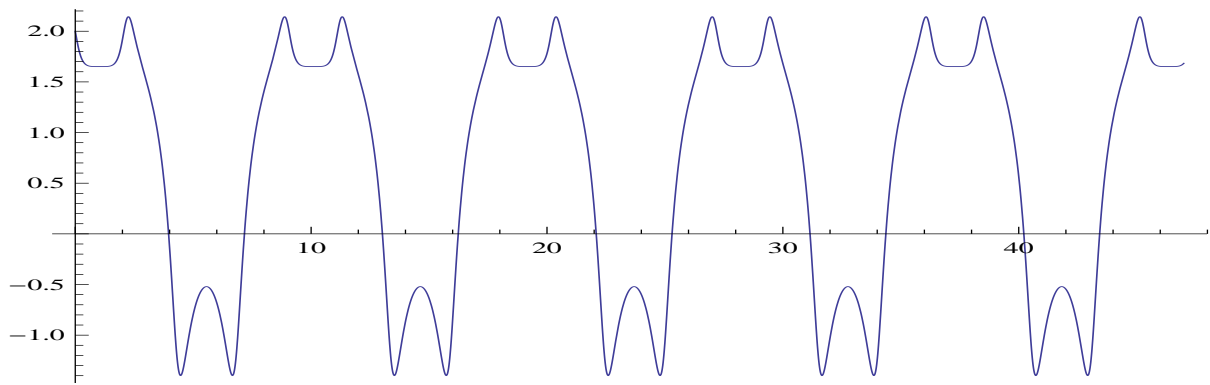}
\caption{$\{ Q(t) : 0 \le t \le 47 \}$}
\label{f1}
\end{figure}
Hence $Z_Q(1/3+{\rm{i}}t)$ does not satisfy (\ref{eq:ap3}). On the contrary, $Z_Q(1/3+{\rm{i}}t)$ fulfills (\ref{eq:ap2}) from  Example \ref{exa:q}. 
\end{remark}

By the manners similar to the proofs of Corollaries \ref{cor:ap1} and \ref{cor:ap2}, we have the following. Note that Corollary \ref{cor:ap3} is also a weaker version of \cite[Theorem 9.6]{Steuding1}. 
\begin{corollary}\label{cor:ap3}
Let $\sigma>1$ and $\sum_{k=1}^\eta \alpha_k(p)^r \ge 0$ for any $r \in {\mathbb{N}}$ and $p \in {\mathbb{P}}$ in (\ref{eq:ap1}). Then for any $\varepsilon>0$, there exits $\tau \in {\mathbb{R}}$ such that
$$
\bigl| \log {\mathcal{L}} (\sigma + {\rm{i}}t + {\rm{i}}\tau) - \log {\mathcal{L}}(\sigma + {\rm{i}}t) \bigr| 
<\varepsilon \quad \mbox{for any } \,\, t \in {\mathbb{R}}.
$$
\end{corollary}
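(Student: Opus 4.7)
The plan is to reduce the uniform-in-$t$ statement to a single-point approximation claim via Theorem \ref{th:ap2}, and then to handle that claim through the classical Bohr almost periodicity of the Dirichlet series for $\log {\mathcal{L}}(s)$ in its half-plane of absolute convergence.

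First, I would apply Theorem \ref{th:ap2} with $t_1=t+\tau$ and $t_2=t$. Since the hypothesis $\sum_{k=1}^\eta \alpha_k(p)^r \ge 0$ is in force, the theorem yields
\begin{equation*}
\bigl|\log{\mathcal{L}}(\sigma+{\rm{i}}t+{\rm{i}}\tau)-\log{\mathcal{L}}(\sigma+{\rm{i}}t)\bigr|^2
\le 4\,\bigl|\log{\mathcal{L}}(\sigma)\bigr|\,
\bigl|\log{\mathcal{L}}(\sigma)-\log{\mathcal{L}}(\sigma+{\rm{i}}\tau)\bigr|.
\end{equation*}
The right-hand side is independent of $t$, so any $\tau$ that makes it small gives the corollary uniformly in $t\in{\mathbb{R}}$. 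Hence it suffices to show that for every $\varepsilon'>0$ there exists $\tau\in{\mathbb{R}}$ with
$\bigl|\log{\mathcal{L}}(\sigma)-\log{\mathcal{L}}(\sigma+{\rm{i}}\tau)\bigr|<\varepsilon'$.

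For the reduced claim, I would recall the Dirichlet series expansion \eqref{eq:9.19},
\begin{equation*}
\log{\mathcal{L}}(s)=\sum_{p}\sum_{r=1}^\infty \frac{1}{r}\sum_{k=1}^\eta \alpha_k(p)^r\, p^{-rs},
\end{equation*}
which converges absolutely for $\Re(s)>1$. Write
$\log{\mathcal{L}}(\sigma)-\log{\mathcal{L}}(\sigma+{\rm{i}}\tau)=\sum_p\sum_{r\ge1}\frac{1}{r}\bigl(\sum_k\alpha_k(p)^r\bigr)p^{-r\sigma}(1-p^{-{\rm i}r\tau})$,
split it into a finite head over primes $p\le P$ with exponents $r\le R$ and a tail controlled (by absolute convergence) by a quantity less than $\varepsilon'/2$ for $P,R$ sufficiently large, uniformly in $\tau$. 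For the head, $|1-p^{-{\rm i}r\tau}|=|1-e^{-{\rm i}r\tau\log p}|$ is small as soon as each $\tau\log p$ ($p\le P$) lies close to $0$ modulo $2\pi$. By the Kronecker approximation theorem applied to the rationally independent numbers $\{(\log p)/2\pi : p\le P\}$, such $\tau\in{\mathbb{R}}$ exist; this is the standard route sketched in \cite[pp.~287--288]{KaVo} and used by the author for Lemma \ref{lem:last}. Choosing first $P,R$ for the tail and then $\tau$ for the head gives the required bound.

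Combining the two steps, for prescribed $\varepsilon>0$ choose $\varepsilon'=\varepsilon^2/(4|\log{\mathcal{L}}(\sigma)|+1)$ and produce $\tau$ as above; the displayed inequality then gives $|\log{\mathcal{L}}(\sigma+{\rm i}t+{\rm i}\tau)-\log{\mathcal{L}}(\sigma+{\rm i}t)|<\varepsilon$ for every $t\in{\mathbb{R}}$. The only non-routine step is the almost-periodicity estimate for $\log{\mathcal{L}}(\sigma+{\rm i}\tau)-\log{\mathcal{L}}(\sigma)$, and this is precisely where the Euler product structure together with Kronecker's theorem does the work; Theorem \ref{th:ap2} then does the ``uniformisation in $t$'' for free.
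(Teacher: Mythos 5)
Your proposal is correct and follows essentially the same route as the paper: the author likewise applies Theorem \ref{th:ap2} with $t_1=t+\tau$, $t_2=t$ to reduce the claim to finding $\tau$ with $\bigl|\log{\mathcal{L}}(\sigma)-\log{\mathcal{L}}(\sigma+{\rm i}\tau)\bigr|$ small, and settles that single-point estimate by the head/tail splitting of the absolutely convergent Dirichlet series for $\log{\mathcal{L}}$ combined with Kronecker's approximation theorem, exactly as in the proof of Lemma \ref{lem:last} and the argument of \cite[pp.~287--288]{KaVo}. No gaps.
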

\begin{corollary}\label{cor:ap4}
Let $\sigma>1$ and $\sum_{k=1}^\eta \alpha_k(p)^r \ge 0$ for any $r \in {\mathbb{N}}$ and $p \in {\mathbb{P}}$ in (\ref{eq:ap1}). Then for any $\lambda \in {\mathbb{R}}$, $1 \ne \beta \in {\mathbb{R}}$ and $\varepsilon>0$, there exits $t \in {\mathbb{R}}$ such that
$$
\bigl| \log {\mathcal{L}} (\sigma + {\rm{i}}\lambda + {\rm{i}}\beta t) - 
\log {\mathcal{L}}(\sigma + {\rm{i}}t) \bigr| < \varepsilon.
$$
\end{corollary}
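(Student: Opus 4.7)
The plan is to mirror the proof of Corollary \ref{cor:ap2}, using Theorem \ref{th:ap2} in place of Theorem \ref{th:ap1}. First, I will apply Theorem \ref{th:ap2} with $t_1 := \lambda + \beta t$ and $t_2 := t$, so that $t_1 - t_2 = \lambda + (\beta - 1) t$, yielding
$$
\bigl|\log {\mathcal{L}}(\sigma + {\rm{i}}\lambda + {\rm{i}}\beta t) - \log {\mathcal{L}}(\sigma + {\rm{i}}t)\bigr|^2 \le 4\bigl|\log {\mathcal{L}}(\sigma)\bigr|\bigl|\log {\mathcal{L}}(\sigma) - \log {\mathcal{L}}(\sigma + {\rm{i}}\lambda + {\rm{i}}(\beta-1)t)\bigr|.
$$
The problem then reduces to producing, for each $\delta > 0$, some $t \in {\mathbb{R}}$ that makes the second factor on the right smaller than $\delta$.

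Next, because $\beta \ne 1$, the affine map $t \mapsto \tau := \lambda + (\beta - 1) t$ is a bijection of ${\mathbb{R}}$ onto itself. Hence the reduced task is equivalent to: for every $\delta > 0$ there exists $\tau \in {\mathbb{R}}$ with $|\log {\mathcal{L}}(\sigma) - \log {\mathcal{L}}(\sigma + {\rm{i}}\tau)| < \delta$. But this is precisely Corollary \ref{cor:ap3} specialized at $t = 0$ with $\varepsilon$ replaced by $\delta$. Choosing $\delta := \varepsilon^2 / (4 |\log {\mathcal{L}}(\sigma)|)$ then closes the argument. The degenerate case $\log {\mathcal{L}}(\sigma) = 0$ must also be handled: by the Dirichlet expansion (\ref{eq:9.19}) together with the hypothesis $\sum_{k=1}^\eta \alpha_k(p)^r \ge 0$, every term appearing in $\log {\mathcal{L}}(\sigma)$ is nonnegative, so the vanishing forces $\alpha_k(p) = 0$ for every $p$ and $k$; then ${\mathcal{L}} \equiv 1$ and the conclusion is trivial.

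The potentially hard step here is the one-sided almost-periodicity statement $|\log {\mathcal{L}}(\sigma) - \log {\mathcal{L}}(\sigma + {\rm{i}}\tau)| < \delta$, which in the literature is secured by the Bohr--Kronecker diophantine approximation method (as alluded to in the proofs of Corollaries \ref{cor:ap1} and \ref{cor:ap3}). Since that input is already packaged in Corollary \ref{cor:ap3}, the present statement follows from a purely formal manipulation: no further analytic estimate is required beyond the Cauchy--Schwarz-type inequality (\ref{eq:ap3}) of Theorem \ref{th:ap2} and the bijectivity of the affine substitution $t \mapsto \lambda + (\beta-1)t$ permitted by $\beta \ne 1$.
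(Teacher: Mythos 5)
Your proof is correct and follows essentially the same route as the paper: the paper derives Corollary \ref{cor:ap4} by applying the inequality of Theorem \ref{th:ap2} with $t_1=\lambda+\beta t$, $t_2=t$ and then reducing, exactly as you do, to the single-point almost-periodicity statement $|\log{\mathcal{L}}(\sigma)-\log{\mathcal{L}}(\sigma+{\rm{i}}\tau)|<\delta$, which it secures by the Kronecker-approximation argument of Lemma \ref{lem:last} (the same input that underlies Corollary \ref{cor:ap3}, which you cite instead). Your explicit treatment of the affine bijection $t\mapsto\lambda+(\beta-1)t$ and of the degenerate case $\log{\mathcal{L}}(\sigma)=0$ only makes precise what the paper leaves implicit.
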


\subsection{Multidimensional case}
In this subsection, we consider the multidimensional polynomial Euler product (\ref{eq:defpe4}) with $\min_{1\le l\le \varphi} \Re \langle \vc_l,\vs\rangle >1$, $\alpha_{lk}(p) \in {\mathbb{C}}$, $|\alpha_{lk}(p)|\le 1$, $1 \le k \le \eta$, $1 \le l \le \varphi$, and the condition (A1) or (A2) introduced in Section 4. The following is the multidimensional version of Lemma \ref{lem:chawk}. 
\begin{lemma}\label{lem:chawk2}
Let $\vt_1,\vt_2 \in {\mathbb{R}}^d$, and $f(\vt)$ be a characteristic function on ${\mathbb{R}}^d$. Then we have
\begin{equation}
\bigl|f(\vt_1)-f(\vt_2)\bigr|^2 \le 4 \bigl| 1-f(\vt_1-\vt_2)\bigr|.
\label{eq:chara2}
\end{equation}
\end{lemma}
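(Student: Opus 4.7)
The plan is to mimic the standard proof of Lemma \ref{lem:chawk} verbatim, simply replacing the scalar product $tx$ by the inner product $\langle \vt,x\rangle$, since no feature of the proof is dimension-specific. Let $\mu$ be the probability measure on $\R^d$ whose characteristic function is $f$, so that $f(\vt)=\int_{\rd}e^{{\rm i}\langle \vt,x\rangle}\mu(dx)$ for every $\vt\in\rd$.

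The first step is to factor out the common phase: writing $\vt_1=(\vt_1-\vt_2)+\vt_2$ and using $e^{{\rm i}\langle \vt_1,x\rangle}-e^{{\rm i}\langle \vt_2,x\rangle}=e^{{\rm i}\langle \vt_2,x\rangle}\bigl(e^{{\rm i}\langle \vt_1-\vt_2,x\rangle}-1\bigr)$, I obtain
\begin{equation*}
f(\vt_1)-f(\vt_2)=\int_{\rd}e^{{\rm i}\langle \vt_2,x\rangle}\bigl(e^{{\rm i}\langle \vt_1-\vt_2,x\rangle}-1\bigr)\mu(dx).
\end{equation*}
Taking absolute values, pulling them inside, and applying the Cauchy--Schwarz inequality against the probability measure $\mu$ yields
\begin{equation*}
\bigl|f(\vt_1)-f(\vt_2)\bigr|^2\le\left(\int_{\rd}\bigl|e^{{\rm i}\langle \vt_1-\vt_2,x\rangle}-1\bigr|\mu(dx)\right)^2\le\int_{\rd}\bigl|e^{{\rm i}\langle \vt_1-\vt_2,x\rangle}-1\bigr|^2\mu(dx).
\end{equation*}

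The second step is the trigonometric identity $|e^{{\rm i}\theta}-1|^2=2(1-\cos\theta)$. Applied with $\theta=\langle \vt_1-\vt_2,x\rangle$ and integrated against $\mu$, this gives
\begin{equation*}
\int_{\rd}\bigl|e^{{\rm i}\langle \vt_1-\vt_2,x\rangle}-1\bigr|^2\mu(dx)=2\bigl(1-\Re f(\vt_1-\vt_2)\bigr)=2\,\Re\bigl(1-f(\vt_1-\vt_2)\bigr).
\end{equation*}
Since $|\Re z|\le|z|$ for any $z\in{\mathbb C}$, the right-hand side is at most $2\bigl|1-f(\vt_1-\vt_2)\bigr|\le 4\bigl|1-f(\vt_1-\vt_2)\bigr|$, which gives \eqref{eq:chara2}.

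There is essentially no obstacle here: the only ingredients are the representation of $f$ as a Fourier transform of a probability measure and Cauchy--Schwarz, both of which are dimension-agnostic. The constant $4$ in the stated inequality is in fact not tight; the above argument delivers the sharper constant $2$, but the looser form matches Lemma \ref{lem:chawk} and suffices for the applications in Theorems \ref{th:ap1} and \ref{th:ap2}.
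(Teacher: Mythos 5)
Your proof is correct: the paper states this lemma without proof, as the multidimensional analogue of the well-known one-dimensional inequality of Lemma \ref{lem:chawk}, and your argument is exactly the standard derivation (phase factorization, Cauchy--Schwarz, and the identity $|e^{{\rm i}\theta}-1|^2=2(1-\cos\theta)$), which indeed yields the sharper bound $2\bigl(1-\Re f(\vt_1-\vt_2)\bigr)\le 2\bigl|1-f(\vt_1-\vt_2)\bigr|$. Nothing further is needed.
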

By using the lemma above and Theorem \ref{th:clam1}, and modifying the proof of Theorem \ref{th:ap1}, we have the following theorem.
\begin{theorem}\label{th:apmd1}
Let $\prod_{l=1}^\varphi a_l(n_l) \ge 0$ for all $(n_1, \ldots , n_\varphi) \in {\mathbb{N}}^\varphi$ in (\ref{eq:defsz4}). Then we have 
\begin{equation}
\bigl| Z_E(\vsig+{\rm{i}}\vt_1) - Z_E(\vsig+{\rm{i}}\vt_2)\bigr|^2 \le 
4 \bigl| Z_E(\vsig)\bigr| \bigl| Z_E(\vsig) - Z_E(\vsig+{\rm{i}}(\vt_1-\vt_2))\bigr|.
\label{eq:apmd2}
\end{equation}
\end{theorem}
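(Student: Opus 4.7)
The plan is to mirror the proof of Theorem~\ref{th:ap1}, substituting the multidimensional characteristic-function inequality of Lemma~\ref{lem:chawk2} for the one-dimensional Lemma~\ref{lem:chawk}. The key observation is that the hypothesis $\prod_{l=1}^\varphi a_l(n_l)\ge 0$ for all $(n_1,\ldots,n_\varphi)\in\mathbb{N}^\varphi$ is precisely the criterion supplied by Theorem~\ref{th:dm1} for $f_{\vsig}(\vt)=Z_E(\vsig+{\rm i}\vt)/Z_E(\vsig)$ to be a genuine characteristic function on $\mathbb{R}^d$.

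Once Theorem~\ref{th:dm1} is invoked, the proof reduces to a short algebraic manipulation. I would apply Lemma~\ref{lem:chawk2} to $f_{\vsig}$ at the points $\vt_1$ and $\vt_2$, obtaining
$$\left|\frac{Z_E(\vsig+{\rm i}\vt_1)}{Z_E(\vsig)}-\frac{Z_E(\vsig+{\rm i}\vt_2)}{Z_E(\vsig)}\right|^2\le 4\left|1-\frac{Z_E(\vsig+{\rm i}(\vt_1-\vt_2))}{Z_E(\vsig)}\right|,$$
and then clear denominators by multiplying both sides by $|Z_E(\vsig)|^2$. The left-hand side becomes $|Z_E(\vsig+{\rm i}\vt_1)-Z_E(\vsig+{\rm i}\vt_2)|^2$, while the right-hand side becomes $4|Z_E(\vsig)|\cdot|Z_E(\vsig)-Z_E(\vsig+{\rm i}(\vt_1-\vt_2))|$, which is exactly~(\ref{eq:apmd2}).

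The clearing of denominators is legitimate because $Z_E(\vsig)\ne 0$ throughout the region of absolute convergence by Lemma~\ref{lem:EPc}; in fact $Z_E(\vsig)>0$ since the Dirichlet series~(\ref{eq:defsz4}) has non-negative coefficients by hypothesis and leading term $\prod_{l=1}^\varphi a_l(1)=1$ by Remark~\ref{rem:an}. There is essentially no real obstacle in the argument: all the substantive work is front-loaded into Theorem~\ref{th:dm1}, whose proof relied on Baker's theorem (Proposition~\ref{lem:baker}) to separate the support points $-\sum_{l=1}^\varphi\log n_l\,\vc_l$ under either assumption (A1) or (A2). Given that theorem, Theorem~\ref{th:apmd1} is a formal consequence of the standard characteristic-function inequality of Lemma~\ref{lem:chawk2}.
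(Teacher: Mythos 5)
Your proposal is correct and follows exactly the paper's own route: Theorem~\ref{th:dm1} gives that $f_{\vsig}$ is a characteristic function under the stated hypothesis, and Lemma~\ref{lem:chawk2} then yields \eqref{eq:apmd2} after clearing the denominator $|Z_E(\vsig)|^2$. The extra remarks on $Z_E(\vsig)\ne 0$ are sound and only make explicit what the paper leaves implicit.
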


\begin{proof}
The normalized function $Z_E(\vsig+{\rm{i}}\vt) / Z_E(\vsig)$ is a characteristic function from the assumption $\prod_{l=1}^\varphi a_l(n_l) \ge 0$ for all $(n_1, \ldots , n_\varphi) \in {\mathbb{N}}^\varphi$ and Theorem \ref{th:dm1}. Therefore, we can apply Lemma \ref{lem:chawk2}.
\end{proof}
We have the next corollaries by Theorem \ref{th:apmd1}, Lemma \ref{lem:last} and the manner used in the proofs of Corollaries \ref{cor:ap1} and \ref{cor:ap2}.

\begin{corollary}\label{cor:apmd1}
Let $\prod_{l=1}^\varphi a_l(n_l) \ge 0$ for all $(n_1, \ldots , n_\varphi) \in {\mathbb{N}}^\varphi$. Then for any $\varepsilon>0$, there exits $\Vec\tau \in {\mathbb{R}}$ such that
$$
\bigl| Z_E (\vsig + {\rm{i}} \vt + {\rm{i}} \Vec\tau) - Z_E (\vsig + {\rm{i}}\vt) \bigr| <\varepsilon 
\quad \mbox{for any } \,\, \vt \in {\mathbb{R}}^d.
$$
\end{corollary}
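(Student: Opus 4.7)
The plan is to reduce the uniform (in $\vt$) statement to a pointwise statement at $\vt=\Vec{0}$ by means of Theorem \ref{th:apmd1}, and then to appeal to the almost-periodic behavior of $Z_E$ on the vertical ``surface'' $\vsig+{\rm i}\R^d$ through Lemma \ref{lem:last}. Concretely, applying Theorem \ref{th:apmd1} with $\vt_1:=\vt+\Vec\tau$ and $\vt_2:=\vt$ yields, for every $\vt\in\R^d$,
\begin{equation*}
\bigl| Z_E(\vsig+{\rm i}\vt+{\rm i}\Vec\tau) - Z_E(\vsig+{\rm i}\vt)\bigr|^{2}
\le 4\bigl| Z_E(\vsig)\bigr|\bigl| Z_E(\vsig)-Z_E(\vsig+{\rm i}\Vec\tau)\bigr|.
\end{equation*}
Since the right-hand side does not depend on $\vt$, it is enough to prove: for every $\varepsilon'>0$ there exists $\Vec\tau\in\R^d$ such that $|Z_E(\vsig)-Z_E(\vsig+{\rm i}\Vec\tau)|<\varepsilon'$.

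Next I would deduce this one-sided almost-periodicity from Lemma \ref{lem:last}, which is precisely the multidimensional analogue (proved later in the paper) of the classical statement used in the proofs of Corollaries \ref{cor:ap1} and \ref{cor:ap2}. The standard recipe is to exploit the absolutely convergent representation
\begin{equation*}
Z_E(\vs)=\sum_{n_1,\dots,n_\varphi=1}^{\infty}\frac{a_1(n_1)\cdots a_\varphi(n_\varphi)}{n_1^{\langle\vc_1,\vs\rangle}\cdots n_\varphi^{\langle\vc_\varphi,\vs\rangle}}
\end{equation*}
from Lemma \ref{lem:mdst}: pick $N$ so large that the tail indexed by $\max_l n_l>N$ is smaller than $\varepsilon'/3$ uniformly for $\vs\in\vsig+{\rm i}\R^d$, and then approximate the finite truncation at $\vsig+{\rm i}\Vec\tau$ by its value at $\vsig$. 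Each term of this truncation differs from its $\Vec\tau=\Vec0$ counterpart by a factor $\exp\bigl(-{\rm i}\sum_{l=1}^{\varphi}\langle\vc_l,\Vec\tau\rangle\log n_l\bigr)$, so the claim is equivalent to making a finite collection of the real numbers $\sum_{l=1}^{\varphi}\langle\vc_l,\Vec\tau\rangle\log n_l$ simultaneously close to $0$ modulo $2\pi$. This is a direct application of Kronecker's simultaneous approximation theorem.

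The main obstacle will be verifying the hypotheses of Kronecker's theorem, i.e.\ checking the required $\Q$-linear independence of the exponents. Under assumption (A1) the vectors $\vc_1,\dots,\vc_\varphi$ are linearly independent, so the map $\Vec\tau\mapsto(\langle\vc_1,\Vec\tau\rangle,\dots,\langle\vc_\varphi,\Vec\tau\rangle)$ is surjective onto $\R^\varphi$ and the problem reduces to finitely many independent instances of the one-dimensional Kronecker theorem, one for each $\vc_l$. Under assumption (A2), where $\vc_l=\gamma_l\vc$ with $1=\gamma_1,\gamma_2,\dots,\gamma_\varphi$ algebraic and $\Q$-linearly independent, the required linear independence of $\{\gamma_l\log p\}$ over $\Q$ (for the finitely many primes $p$ that divide the truncation indices $n_l\le N$) follows from Baker's theorem exactly as in the proof of Theorem \ref{th:idm1}: any nontrivial rational relation would force an algebraic power of a rational to be rational, contradicting Proposition \ref{lem:baker}. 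Combining this with the reduction above and Lemma \ref{lem:last} completes the argument.
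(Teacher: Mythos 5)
Your proposal is correct and follows the paper's own route: the inequality of Theorem \ref{th:apmd1} with $\vt_1=\vt+\Vec\tau$, $\vt_2=\vt$ reduces the uniform statement to finding $\Vec\tau$ with $|Z_E(\vsig)-Z_E(\vsig+{\rm i}\Vec\tau)|$ small, which is exactly Lemma \ref{lem:last}. The second half of your write-up, where you re-derive that pointwise step from the Dirichlet series of $Z_E$ via Kronecker and Baker, is a valid (and slightly redundant) alternative to the paper's proof of Lemma \ref{lem:last}, which instead truncates the expansion of $\log Z_E$; note also that since the target phases are all $0$, the $\Q$-linear independence supplied by Baker's theorem is convenient but not strictly necessary for this homogeneous approximation.
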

\begin{corollary}\label{cor:apmd2}
Let $\prod_{l=1}^\varphi a_l(n_l) \ge 0$ for all $(n_1, \ldots , n_\varphi) \in {\mathbb{N}}^\varphi$. Then for any $\Vec \lambda \in {\mathbb{R}}^d$, $1 \ne \beta \in {\mathbb{R}}$ and $\varepsilon>0$, there exits $\vt \in {\mathbb{R}}^d$ such that
$$
\bigl| Z_E (\vsig + {\rm{i}} \Vec\lambda + {\rm{i}} \beta \vt) - Z_E (\vsig + {\rm{i}} \vt) \bigr| < \varepsilon.
$$
\end{corollary}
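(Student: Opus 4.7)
The plan is to reduce Corollary \ref{cor:apmd2} to an almost-periodicity statement for $Z_E$ by the same device used in Corollaries \ref{cor:ap1}, \ref{cor:ap2}, \ref{cor:ap3}, \ref{cor:ap4}, now in the multidimensional setting. First I would apply Theorem \ref{th:apmd1} with $\vt_1 := \Vec\lambda + \beta \vt$ and $\vt_2 := \vt$, so that $\vt_1 - \vt_2 = \Vec\lambda + (\beta-1)\vt$. This gives
\begin{equation*}
\bigl| Z_E(\vsig+{\rm{i}}\Vec\lambda+{\rm{i}}\beta\vt) - Z_E(\vsig+{\rm{i}}\vt)\bigr|^2
\le 4 \bigl| Z_E(\vsig)\bigr| \bigl| Z_E(\vsig) - Z_E(\vsig+{\rm{i}}\Vec\lambda+{\rm{i}}(\beta-1)\vt)\bigr|,
\end{equation*}
valid under the standing hypothesis $\prod_{l=1}^\varphi a_l(n_l)\ge 0$. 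Since $Z_E(\vsig)$ is a fixed finite nonzero quantity in the region of absolute convergence, it suffices to produce, for any given $\varepsilon'>0$, some $\vt \in \R^d$ with
\[
\bigl| Z_E(\vsig) - Z_E(\vsig+{\rm{i}}\Vec\lambda+{\rm{i}}(\beta-1)\vt)\bigr| < \varepsilon'.
\]

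Next I would exploit the assumption $\beta \ne 1$. Writing $\Vec\tau := \Vec\lambda + (\beta-1)\vt$, the map $\vt \mapsto \Vec\tau$ is an affine bijection $\R^d \to \R^d$; in particular, as $\vt$ ranges over $\R^d$, $\Vec\tau$ ranges over all of $\R^d$. Hence the required inequality reduces to: for every $\varepsilon'>0$ there exists $\Vec\tau \in \R^d$ with $|Z_E(\vsig) - Z_E(\vsig+{\rm{i}}\Vec\tau)| < \varepsilon'$. This is precisely the almost-periodicity property of $Z_E$ in the region of absolute convergence, which is the content of Lemma \ref{lem:last} (equivalently, Corollary \ref{cor:apmd1} applied at $\vt=\Vec 0$), established by the usual Kronecker / Bohr argument along the lines of \cite[pp.~287--288]{KaVo}. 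Once such a $\Vec\tau$ is produced, setting $\vt := (\Vec\tau - \Vec\lambda)/(\beta-1)$ and taking $\varepsilon' := \varepsilon^2 / (4 |Z_E(\vsig)|)$ yields the desired inequality.

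The only step requiring real input is the almost-periodicity assertion, but this is already available as Lemma \ref{lem:last}; everything else is a direct application of Theorem \ref{th:apmd1} and an affine change of variable. The main potential obstacle is simply making sure the affine substitution is admissible, which is guaranteed by the hypothesis $\beta \ne 1$ (exactly as in Corollary \ref{cor:ap2}); if $\beta=1$ the map $\vt \mapsto \Vec\tau$ collapses to a constant and the argument fails, consistent with the statement excluding $\beta=1$.
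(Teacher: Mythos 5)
Your proposal is correct and follows essentially the same route as the paper: apply Theorem \ref{th:apmd1} with $\vt_1 = \Vec\lambda + \beta\vt$, $\vt_2 = \vt$, then reduce to producing a single shift at which $Z_E$ nearly returns to $Z_E(\vsig)$, which is supplied by Lemma \ref{lem:last}. The only (harmless) difference is that you invoke Lemma \ref{lem:last} verbatim via the affine substitution $\Vec\tau = \Vec\lambda + (\beta-1)\vt$ (legitimate since $\beta \ne 1$), whereas the paper re-runs the Kronecker-type argument of that lemma with the shifted argument.
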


Similarly, we have the following theorem.
\begin{theorem}\label{th:apmd2}
Let $\sum_{k=1}^\eta \alpha_{lk}(p)^r \ge 0$ for all $r \in {\mathbb{N}}$, $p\in\Prime$ and $1 \le l \le \varphi$ in (\ref{eq:defpe4}). Then it holds that
\begin{equation}
\bigl| \log Z_E (\vsig+{\rm{i}}\vt_1) - \log Z_E (\vsig+{\rm{i}}\vt_2) \bigr|^2 \le 
4 \bigl| \log Z_E (\vsig)\bigr| 
\bigl| \log Z_E (\vsig) - \log Z_E (\vsig+{\rm{i}}(\vt_1-\vt_2))\bigr|.
\label{eq:mdap3}
\end{equation}
\end{theorem}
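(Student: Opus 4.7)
The plan is to mirror the proof of Theorem \ref{th:apmd1}: realize the normalized function $\log Z_E(\vsig + {\rm{i}}\vt)/\log Z_E(\vsig)$ as the characteristic function of an explicit probability distribution on $\rd$, and then invoke Lemma \ref{lem:chawk2}.

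First I would use (\ref{eq:9.19}), adapted to the multidimensional product (\ref{eq:defpe4}), to write
$$
\log Z_E(\vs) = \sum_p \sum_{r=1}^{\infty} \sum_{l=1}^{\varphi} B(p,r,l)\, p^{-r\langle \vc_l,\vs\rangle}, \qquad B(p,r,l) := \frac{1}{r}\sum_{k=1}^\eta \alpha_{lk}(p)^r,
$$
which converges absolutely on $\min_l \Re\langle \vc_l,\vs\rangle > 1$ by the same bound as in Lemma \ref{lem:lmm1}. Under the hypothesis, every $B(p,r,l) \ge 0$ and hence $\log Z_E(\vsig) \ge 0$; the degenerate case $\log Z_E(\vsig)=0$ forces all $B(p,r,l)$ to vanish and (\ref{eq:mdap3}) becomes $0 \le 0$, so I may assume $\log Z_E(\vsig) > 0$. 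Next I would define a discrete random vector $Y_{\vsig}$ on $\rd$ by
$$
{\rm Pr}\bigl(Y_{\vsig} = -r(\log p)\, \vc_l\bigr) = \frac{B(p,r,l)\, p^{-r\langle \vc_l,\vsig\rangle}}{\log Z_E(\vsig)}.
$$

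The decisive structural ingredient is that this assignment is unambiguous: under either (A1) or (A2), the support vectors $r(\log p)\, \vc_l$ are pairwise distinct across triples $(p,r,l)$. This is exactly statement (\ref{eq:ftam}), which was already established inside the proof of Theorem \ref{th:idm1} using Baker's theorem (Proposition \ref{lem:baker}). Granting that, the probabilities sum to $1$, and a direct computation along the lines of Lemma \ref{lem:m1} gives
$$
{\rm E}\bigl[e^{{\rm{i}}\langle \vt, Y_{\vsig}\rangle}\bigr] = \frac{1}{\log Z_E(\vsig)} \sum_{p,r,l} B(p,r,l)\, p^{-r\langle \vc_l,\vsig + {\rm{i}}\vt\rangle} = \frac{\log Z_E(\vsig + {\rm{i}}\vt)}{\log Z_E(\vsig)}.
$$

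With $\log Z_E(\vsig + {\rm{i}}\vt)/\log Z_E(\vsig)$ identified as a bona fide characteristic function on $\rd$, applying Lemma \ref{lem:chawk2} with arguments $\vt_1, \vt_2$ and clearing the denominator by multiplying both sides by $|\log Z_E(\vsig)|^2$ yields (\ref{eq:mdap3}) at once. The only substantive obstacle is verifying that the candidate measure is well defined, i.e., the distinctness of the support points $r(\log p)\vc_l$ across $(p,r,l)$; but this has already been settled as part of Theorem \ref{th:idm1}, so the remaining work is a routine lift of the one-dimensional argument used for Theorem \ref{th:ap2} to the multidimensional setting.
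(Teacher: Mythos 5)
Your proposal is correct and follows essentially the same route as the paper: expand $\log Z_E$ via (\ref{eq:9.19}) into a Dirichlet-type series whose coefficients are nonnegative under the hypothesis, realize $\log Z_E(\vsig+{\rm i}\vt)/\log Z_E(\vsig)$ as the characteristic function of a discrete probability measure, and apply Lemma \ref{lem:chawk2}. The only differences are that the paper concludes the characteristic-function property by citing Theorem \ref{th:dm1} rather than building the measure explicitly, and that the distinctness of the support points $r(\log p)\vc_l$, which you flag as the decisive obstacle, is not actually needed in this direction, since coinciding atoms would merely have their nonnegative weights added.
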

\begin{proof}
In the view of (\ref{eq:9.19}), we have 
\begin{equation*}
\begin{split}
&\log Z_E(\vs) = 
\sum_p \sum_{r=1}^{\infty} \sum_{l=1}^\varphi \sum_{k=1}^\eta \frac{1}{r} 
\alpha_{lk}(p)^r p^{-r\langle\vc_l,\vs \rangle} =
\sum_{l=1}^\varphi \sum_{n_l=1}^\infty \frac{A_l(n_l)}{n_l^s}, \\ &A_l(n_l) :=
\begin{cases}
 \sum_{k=1}^\eta \alpha_{lk}(p)^r /r & n_l=p^r,\\
0 & \mbox{otherwise}.
\end{cases}
\end{split}
\end{equation*}
Thus the normalized function $\log Z_E (\vsig+{\rm{i}}\vt)/ \log Z_E (\vsig)$ is a characteristic function by Theorem \ref{th:dm1} and the condition $\sum_{k=1}^\eta \alpha_{lk}(p)^r \ge 0$. Hence we can apply Lemma \ref{lem:chawk2}. 
\end{proof}

By using Theorem \ref{th:apmd2} and Lemma \ref{lem:last}, we obtain the following corollaries which are multidimensional cases of Corollaries \ref{cor:ap3} and \ref{cor:ap4}. 
\begin{corollary}\label{cor:apmd3}
Let $\sum_{k=1}^\eta \alpha_{lk}(p)^r \ge 0$ for all $r \in {\mathbb{N}}$, $p\in\Prime$ and $1 \le l \le \varphi$. Then for any $\varepsilon>0$, there exits $\Vec\tau \in {\mathbb{R}}$ such that
$$
\bigl| \log Z_E (\vsig + {\rm{i}} \vt + {\rm{i}} \Vec\tau) - \log Z_E (\vsig + {\rm{i}}\vt) \bigr| <\varepsilon 
\quad \mbox{for any } \,\, \vt \in {\mathbb{R}}^d.
$$
\end{corollary}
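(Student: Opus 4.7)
The approach is to combine Theorem \ref{th:apmd2} with a simultaneous Diophantine approximation argument, in direct analogy with the one-dimensional Corollary \ref{cor:ap3}. The point is that Theorem \ref{th:apmd2} converts a two-point estimate into a single-point estimate, so the proof reduces to showing that $\log Z_E(\vsig + {\rm i}\Vec\tau)$ can be made arbitrarily close to $\log Z_E(\vsig)$ by an appropriate choice of $\Vec\tau \in {\mathbb{R}}^d$.

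First, apply Theorem \ref{th:apmd2} with $\vt_1 := \vt + \Vec\tau$ and $\vt_2 := \vt$ to obtain
$$
\bigl|\log Z_E(\vsig+{\rm i}\vt+{\rm i}\Vec\tau) - \log Z_E(\vsig+{\rm i}\vt)\bigr|^{2} \le 4\bigl|\log Z_E(\vsig)\bigr|\cdot\bigl|\log Z_E(\vsig) - \log Z_E(\vsig+{\rm i}\Vec\tau)\bigr|.
$$
The right-hand side is independent of $\vt$, and $|\log Z_E(\vsig)|$ is a fixed finite constant by Lemma \ref{lem:EPc}. Hence it suffices to produce $\Vec\tau$ with $|\log Z_E(\vsig) - \log Z_E(\vsig+{\rm i}\Vec\tau)| < \varepsilon^{2}/(4|\log Z_E(\vsig)|)$.

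For this reduction step I would expand the difference using the Dirichlet series representation (\ref{eq:9.19}):
$$
\log Z_E(\vsig+{\rm i}\Vec\tau) - \log Z_E(\vsig) = \sum_p \sum_{r=1}^{\infty}\sum_{l=1}^{\varphi}\sum_{k=1}^{\eta} \frac{\alpha_{lk}(p)^{r}}{r\,p^{r\langle\vc_l,\vsig\rangle}}\bigl(p^{-{\rm i}r\langle\vc_l,\Vec\tau\rangle}-1\bigr).
$$
By absolute convergence in the region $\min_l \Re\langle\vc_l,\vs\rangle > 1$, fix $N$ so that the tail over $p>N$ contributes less than half of the target bound. The remaining finite portion is controlled by making each $p^{-{\rm i}r\langle\vc_l,\Vec\tau\rangle}$ for $p\le N$ and the relevant finitely many $r,l$ within a small $\delta$ of $1$. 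This last point is precisely the assertion proved within (or by the same method as) Lemma \ref{lem:last}, which I would invoke.

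The main obstacle is this simultaneous approximation step, where the two assumptions (A1) and (A2) must be handled separately. Under (A1), the vectors $\vc_l$ are linearly independent, so $\Vec\tau\mapsto(\langle\vc_1,\Vec\tau\rangle,\ldots,\langle\vc_\varphi,\Vec\tau\rangle)$ is surjective and a classical Kronecker-type theorem applied to the logarithms $\{\log p : p\le N\}$ delivers $\Vec\tau$ simultaneously approximating the desired arguments modulo $2\pi$. Under (A2), one instead works with the numbers $\{\gamma_l \log p\}_{l,p}$, and here the linear independence over $\mathbb{Q}$ required for the Kronecker-type argument follows from Baker's theorem (Proposition \ref{lem:baker}), exactly as in the proofs of Theorems \ref{th:idm1} and \ref{th:dm1}. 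Once this Diophantine input is secured, the proof concludes by substituting the resulting estimate into the inequality from Theorem \ref{th:apmd2}, giving uniformity in $\vt$ automatically.
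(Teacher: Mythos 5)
Your proposal is correct and follows essentially the same route as the paper: the paper deduces this corollary by combining Theorem \ref{th:apmd2} (applied exactly as you do, with $\vt_1 = \vt + {\rm{i}}^{-1}{\rm{i}}\Vec\tau + \vt$ playing no special role beyond $\vt_1-\vt_2=\Vec\tau$) with Lemma \ref{lem:last}, whose proof is precisely the truncation of the Dirichlet series for $\log Z_E$ plus Kronecker's approximation theorem and Baker's theorem under (A1)/(A2) that you outline. There is no gap.
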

\begin{corollary}\label{cor:apmd4}
Let $\sum_{k=1}^\eta \alpha_{lk}(p)^r \ge 0$ for all $r \in {\mathbb{N}}$, $p\in\Prime$ and $1 \le l \le \varphi$. Then for any $\Vec \lambda \in {\mathbb{R}}^d$, $1 \ne \beta \in {\mathbb{R}}$ and $\varepsilon>0$, there exits $\vt \in {\mathbb{R}}^d$ such that
$$
\bigl| \log Z_E (\vsig + {\rm{i}} \Vec\lambda + {\rm{i}} \beta \vt) - \log Z_E (\vsig + {\rm{i}} \vt) \bigr| 
< \varepsilon.
$$
\end{corollary}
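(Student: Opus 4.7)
The plan is to reduce the inequality to a scalar continuity statement via Theorem~\ref{th:apmd2}, mirroring the one-dimensional argument of Corollary~\ref{cor:ap4}. First I would apply Theorem~\ref{th:apmd2} with $\vt_1:=\Vec\lambda+\beta\vt$ and $\vt_2:=\vt$, so that $\vt_1-\vt_2=\Vec\lambda+(\beta-1)\vt$ and
$$\bigl|\log Z_E(\vsig+{\rm{i}}\Vec\lambda+{\rm{i}}\beta\vt)-\log Z_E(\vsig+{\rm{i}}\vt)\bigr|^2\le 4\,|\log Z_E(\vsig)|\,\bigl|\log Z_E(\vsig)-\log Z_E(\vsig+{\rm{i}}\Vec\lambda+{\rm{i}}(\beta-1)\vt)\bigr|.$$
Under the hypothesis $\sum_{k=1}^\eta\alpha_{lk}(p)^r\ge 0$, the Dirichlet expansion \eqref{eq:9.19} of $\log Z_E(\vsig)$ has non-negative real terms and converges absolutely because $\min_{l}\Re\langle\vc_l,\vsig\rangle>1$, so $|\log Z_E(\vsig)|$ is a fixed finite constant. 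It therefore suffices to exhibit, for any $\varepsilon'>0$, a vector $\vt\in\mathbb{R}^d$ making the second factor on the right less than $\varepsilon'$.

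Because $\beta\ne 1$, the affine substitution $\vt\mapsto \vec{u}:=\Vec\lambda+(\beta-1)\vt$ is a bijection of $\mathbb{R}^d$, so the remaining task reads: given $\varepsilon'>0$, find $\vec{u}\in\mathbb{R}^d$ with $|\log Z_E(\vsig)-\log Z_E(\vsig+{\rm{i}}\vec{u})|<\varepsilon'$. This is exactly the content of Lemma~\ref{lem:last}, which is the very device that drives Corollary~\ref{cor:apmd3} (the $\beta=0$ analogue). Using \eqref{eq:9.19}, the difference in question equals
$$\sum_p\sum_{r=1}^\infty\sum_{l=1}^\varphi\sum_{k=1}^\eta\frac{\alpha_{lk}(p)^r}{r}\,p^{-r\langle\vc_l,\vsig\rangle}\bigl(1-e^{-{\rm{i}}r\langle\vc_l,\vec{u}\rangle\log p}\bigr),$$
and absolute convergence lets me truncate to finitely many triples $(p,r,l)$, reducing the task to the simultaneous approximation of the phases $r\langle\vc_l,\vec{u}\rangle\log p$ to $2\pi\mathbb{Z}$.

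The main obstacle is Lemma~\ref{lem:last}, which carries the genuine Diophantine content. Under assumption (A1) the vectors $\vc_l$ are linearly independent, so the inner products $\langle\vc_l,\vec{u}\rangle$ can be tuned coordinate-wise, and the problem collapses into $\varphi$ classical one-dimensional Kronecker approximations using the $\mathbb{Q}$-linear independence of $\{\log p:p\in\Prime\}$. Under (A2), writing $\vec{u}=u\vec{c}$, it suffices to find a single real $u$ for which the phases $u\gamma_l r\log p$ lie close to $2\pi\mathbb{Z}$ for all relevant $(p,r,l)$; Baker's theorem (Proposition~\ref{lem:baker}) guarantees that $\{\gamma_l\log p:1\le l\le\varphi,\ p\le N\}$ is linearly independent over $\mathbb{Q}$, exactly as exploited in the proofs of Theorem~\ref{th:idm1} and Lemma~\ref{lem:m2}, and the multidimensional Kronecker theorem then supplies the desired $u$. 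Once Lemma~\ref{lem:last} is in hand, the reduction via $\beta\ne 1$ is purely algebraic and the chain closes at once.
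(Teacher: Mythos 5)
Your proposal is correct and follows exactly the route the paper intends: apply Theorem~\ref{th:apmd2} with $\vt_1=\Vec\lambda+\beta\vt$, $\vt_2=\vt$, and then use Lemma~\ref{lem:last} together with the bijectivity of $\vt\mapsto\Vec\lambda+(\beta-1)\vt$ for $\beta\ne 1$ to make the right-hand factor small. Your sketch of Lemma~\ref{lem:last} also matches the paper's own proof (Kronecker's theorem plus the Baker-type independence of Proposition~\ref{pro:abaker}, with the truncation in $(p,r,l)$ handled by absolute convergence).
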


At the end of this paper, we show the remained lemma used many times in the proofs of corollaries in this section.
\begin{lemma}\label{lem:last}
For any $\varepsilon>0$, there exits $\Vec \tau \in {\mathbb{R}}^d$ such that
$$
\bigl| Z_E (\vsig) - Z_E (\vsig + {\rm{i}} \Vec \tau) \bigr| < \varepsilon \quad \mbox{and} \quad
\bigl| \log Z_E (\vsig) - \log Z_E (\vsig + {\rm{i}} \Vec \tau) \bigr| < \varepsilon.
$$
\end{lemma}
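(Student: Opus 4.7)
The plan is to exploit the Dirichlet series expansion \eqref{eq:9.19} by writing
\begin{equation*}
\log Z_E(\vsig + {\rm{i}}\Vec\tau) - \log Z_E(\vsig) = \sum_p \sum_{r=1}^\infty \sum_{l=1}^\varphi \sum_{k=1}^\eta \frac{\alpha_{lk}(p)^r}{r}\, p^{-r\langle\vc_l,\vsig\rangle} \bigl( p^{-{\rm{i}}r\langle\vc_l,\Vec\tau\rangle} - 1 \bigr),
\end{equation*}
splitting the sum at a cutoff prime $N = N(\varepsilon,\vsig)$, controlling the tail $p > N$ uniformly in $\Vec\tau$ by absolute convergence (Lemma \ref{lem:EPc}) together with $|p^{-{\rm{i}}r\langle\vc_l,\Vec\tau\rangle} - 1| \le 2$, and then making the finite portion $p \le N$ arbitrarily small by a Kronecker-type simultaneous approximation.

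For the finite part, it suffices to choose $\Vec\tau$ so that $\langle\vc_l,\Vec\tau\rangle\log p$ lies within a small neighbourhood of $2\pi\mathbb{Z}$ for every $p \le N$ and every $1 \le l \le \varphi$; the extra factor $r$ is then absorbed via $|z^r - 1| \le r|z - 1|$ combined with the geometric decay of $p^{-r\langle\vc_l,\vsig\rangle}$. Under (A1), since $\vc_1,\ldots,\vc_\varphi$ are linearly independent, $\langle\vc_l,\Vec\tau\rangle$ can be prescribed freely; setting each equal to a single real $y$ reduces the problem to finding $y$ with $y\log p \pmod{2\pi}$ small for every $p \le N$, which follows from the continuous Kronecker approximation theorem since $\{\log p : p \le N\}$ is $\Q$-linearly independent by the fundamental theorem of arithmetic. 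Under (A2) one parameterises $\langle\vc_l,\Vec\tau\rangle = \gamma_l y$ with $y := \langle\vc,\Vec\tau\rangle$, and the required Kronecker hypothesis becomes $\Q$-linear independence of $\{\gamma_l \log p : 1 \le l \le \varphi,\, p \le N\}$.

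The main obstacle is verifying this $\Q$-linear independence in case (A2). Suppose $\sum_{l,p} b_{l,p}\gamma_l \log p = 0$ with rationals $b_{l,p}$ not all zero; grouping by $p$ yields $\sum_p \delta_p \log p = 0$ with algebraic coefficients $\delta_p := \sum_l b_{l,p}\gamma_l$. Baker's theorem on linear forms in logarithms (a consequence of Proposition \ref{lem:baker}, using the multiplicative independence of distinct primes) asserts that $\log p_1,\ldots,\log p_N$ are not only $\Q$-linearly independent but actually $\overline{\Q}$-linearly independent; hence every $\delta_p = 0$, and the $\Q$-linear independence of $\gamma_1,\ldots,\gamma_\varphi$ then forces every $b_{l,p}=0$, a contradiction.

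Once such a $\Vec\tau$ is produced so that $|\log Z_E(\vsig) - \log Z_E(\vsig + {\rm{i}}\Vec\tau)|$ is arbitrarily small, the companion inequality for $Z_E$ itself follows from the same $\Vec\tau$ via $|e^A - e^B| \le |A-B|\max(|e^A|,|e^B|)$ together with the uniform bound $|Z_E(\vsig + {\rm{i}}\Vec\tau)| \le \prod_p \prod_{l,k}\bigl(1 - |\alpha_{lk}(p)|\, p^{-\langle\vc_l,\vsig\rangle}\bigr)^{-1} < \infty$ furnished by Lemma \ref{lem:EPc}, after shrinking the initial logarithmic tolerance by a fixed constant depending only on $\vsig$.
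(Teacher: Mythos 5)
Your proposal is correct and follows essentially the same route as the paper: expand $\log Z_E$ as the Dirichlet series \eqref{eq:9.19}, truncate the sum over $p$ and $r$, bound the tail by absolute convergence, treat the finite part with Kronecker's approximation theorem using the $\Q$-linear independence of $\{\gamma_l\log p\}$ (which the paper imports as Proposition \ref{pro:abaker} and you re-derive from Baker's theorem), absorb the $r$-th powers via $|z^r-1|\le r|z-1|$, and finally pass from the bound on $\log Z_E$ to the bound on $Z_E$ by exponentiating. The only cosmetic differences are that you inline the proof of the linear-independence input and use a Lipschitz bound for $\exp$ where the paper factors out $Z_E(\vsig)$.
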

In order to prove this, we quote the following two propositions.
\begin{proposition}[see {\cite[Theorem 7.9]{Apo2}}]\label{pro:kroap1}
If $\phi_1 , \ldots ,\phi_n$ are arbitrary real numbers, if real numbers $\theta_1,\ldots,\theta_n$ are linearly independent over the rationals, and if $\varepsilon>0$ is arbitrary, then there exists a real number $t$ and integers $h_1, \ldots, h_n$ such that
$$
|t\theta_k - h_k - \phi_k| < \varepsilon, \qquad 1 \le k \le n. 
$$
\end{proposition}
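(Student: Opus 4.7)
The plan is to exploit absolute convergence of the Euler product \eqref{eq:defpe4} and of the logarithmic Dirichlet series \eqref{eq:9.19}, both of which converge uniformly in $\Vec\tau\in\R^d$ on the vertical line $\vs=\vsig+{\rm i}\Vec\tau$ (because $|p^{-\langle\vc_l,\vsig+{\rm i}\Vec\tau\rangle}|=p^{-\langle\vc_l,\vsig\rangle}$ is independent of $\Vec\tau$), in order to reduce to a finite-prime problem, and then to invoke Kronecker's theorem (Proposition \ref{pro:kroap1}). Given $\varepsilon>0$, I first choose $N,R\in\N$ so large that the tails of these expansions (over primes $p>N$, and in the logarithmic case also over exponents $r>R$) contribute at most $\varepsilon/3$ uniformly in $\Vec\tau$. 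It then suffices to make $e^{-{\rm i}r\langle\vc_l,\Vec\tau\rangle\log p}$ close to $1$ for all $p\le N$, $1\le r\le R$, and $1\le l\le\varphi$; since the range of $r$ is bounded, this reduces to requiring that $\langle\vc_l,\Vec\tau\rangle\log p/(2\pi)$ lie within some $\delta=\delta(\varepsilon,N,R)>0$ of $\mathbb{Z}$ for every $p\le N$ and $1\le l\le\varphi$.

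Under assumption (A1), the vectors $\vc_1,\ldots,\vc_\varphi$ are linearly independent in $\R^d$, so the components $t_l:=\langle\vc_l,\Vec\tau\rangle\in\R$ may be prescribed independently. For each fixed $l$, Proposition \ref{pro:kroap1} applied with $\theta_j=\log p_j/(2\pi)$ over the primes $p_j\le N$ and $\phi_j=0$ delivers a suitable $t_l$, since $\log 2,\log 3,\log 5,\ldots$ are $\Q$-linearly independent by the fundamental theorem of arithmetic.

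Under assumption (A2), only one real parameter is free: $\langle\vc_l,\Vec\tau\rangle=\gamma_l t$ with $t:=\langle\vc,\Vec\tau\rangle$. I must therefore apply Kronecker to the $\varphi\cdot\pi(N)$ numbers $\{\gamma_l\log p/(2\pi):1\le l\le\varphi,\,p\le N\}$, and the crux is their $\Q$-linear independence. Suppose a relation $\sum_{l,p}q_{l,p}\gamma_l\log p=0$ holds with integers $q_{l,p}$ not all zero. Setting $n_l:=\prod_{p\le N}p^{q_{l,p}}\in\Q_{>0}$ and using $\gamma_1=1$, this rearranges to $n_1=\prod_{l=2}^\varphi n_l^{-\gamma_l}$. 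If $n_l=1$ for every $l\ge2$, then also $n_1=1$, and unique factorization forces all $q_{l,p}$ to vanish, a contradiction. Otherwise, restricting the product on the right to the set $S:=\{l\ge2:n_l\ne1\}$, the bases $n_l\in\Q\setminus\{0,1\}$ are algebraic and the exponents $-\gamma_l$, together with $1$, are $\Q$-linearly independent (as a subset of $1,\gamma_2,\ldots,\gamma_\varphi$). Proposition \ref{lem:baker} then declares $\prod_{l\in S}n_l^{-\gamma_l}$ transcendental, which contradicts $n_1\in\Q$. Hence the $\Q$-linear independence holds, and Kronecker produces the required $t$.

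The main obstacle is the case (A2) linear-independence claim; without algebraicity of $\gamma_l$ it can fail (cf.\ Remark \ref{rem:dnm1}), and Baker's theorem is precisely the transcendence input needed to close the argument. All remaining ingredients --- truncation via absolute convergence, the passage from closeness of exponential factors to closeness of the finite product/sum, and Kronecker's theorem itself --- are standard.
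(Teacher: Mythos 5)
You were asked to prove Proposition \ref{pro:kroap1}, which is Kronecker's approximation theorem itself; the paper does not prove it but quotes it from \cite{Apo2}. Your proposal does not address this statement at all: what you have written is a proof of a \emph{different} result, namely the almost-periodicity statement of Lemma \ref{lem:last} (that $|Z_E(\vsig)-Z_E(\vsig+{\rm i}\Vec\tau)|$ and $|\log Z_E(\vsig)-\log Z_E(\vsig+{\rm i}\Vec\tau)|$ can be made small for a suitable $\Vec\tau$), and your argument explicitly \emph{invokes} Proposition \ref{pro:kroap1} as its key input. As a proof of the proposition in question this is circular: you cannot establish Kronecker's theorem by appealing to Kronecker's theorem. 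A genuine proof would need an independent argument, e.g.\ the Bohr--Weyl one: for $\theta_1,\ldots,\theta_n$ linearly independent over $\Q$, the closure of the line $\{t(\theta_1,\ldots,\theta_n)\bmod 1 : t\in\R\}$ in the torus $(\mathbb{R}/\mathbb{Z})^n$ is a closed connected subgroup, and any proper such subgroup is annihilated by a nontrivial character $x\mapsto e^{2\pi{\rm i}\sum_k h_k x_k}$, which would force the rational relation $\sum_k h_k\theta_k=0$; hence the orbit is dense and every target $(\phi_1,\ldots,\phi_n)$ is approximated. That is the content of Apostol's Theorem 7.9, and none of it appears in your proposal.

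For what it is worth, the argument you did write tracks the paper's proof of Lemma \ref{lem:last} quite faithfully: truncation of $\log Z_E$ to primes $p\le N$ and exponents $r\le R$ via absolute convergence, reduction to making $p^{-{\rm i}r\langle\vc_l,\Vec\tau\rangle}$ close to $1$, and, under (A2), the use of Baker's theorem (Proposition \ref{lem:baker}, packaged in the paper as Proposition \ref{pro:abaker}) to certify that the numbers $\gamma_l\log p$ are linearly independent over $\Q$ before Kronecker is applied. Your Baker-based independence argument is correct and is essentially \cite[Proposition 2.2]{Nakamurasr1}. But it proves the wrong statement, so the task as posed is not accomplished.
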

\begin{proposition}[see {\cite[Proposition 2.2]{Nakamurasr1}}]
Let $p_n$ be the $n$-th prime number and $\omega_1, \omega_2, \ldots ,$ $\omega_m$ with $\omega_1 =1$ be algebraic real numbers which are linearly independent over the rationals. Then $\{ \log p_n ^{\omega_l}\}_{n \in {\mathbb{N}}}^{1 \le l \le m}$ is also linearly independent over the rationals. 
\label{pro:abaker}
\end{proposition}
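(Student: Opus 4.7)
The plan is to argue by contradiction via Baker's theorem (Proposition \ref{lem:baker}). Suppose a finite non-trivial $\Q$-linear relation
$$
\sum_{l=1}^m \sum_{n=1}^N q_{l,n}\,\omega_l \log p_n = 0
$$
holds with $q_{l,n} \in \Q$ not all zero. After clearing a common denominator I may assume $q_{l,n} \in \mathbb{Z}$. Setting $\alpha_l := \prod_{n=1}^N p_n^{q_{l,n}} \in \Q_{>0}$ and grouping by $l$, the relation reads $\sum_{l=1}^m \omega_l \log \alpha_l = 0$; exponentiating and using $\omega_1 = 1$ gives
$$
\prod_{l=2}^m \alpha_l^{\omega_l} = \alpha_1^{-1} \in \Q^\times.
$$

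Next, discard from the left-hand product every index $l \in \{2,\ldots,m\}$ with $\alpha_l = 1$, and let $S \subseteq \{2,\ldots,m\}$ denote the surviving indices, so each remaining $\alpha_l$ lies in $\Q \setminus \{0,1\}$ (positivity is automatic, as $\alpha_l$ is a product of positive primes raised to integer powers). Because $1, \omega_2, \ldots, \omega_m$ are $\Q$-linearly independent by hypothesis, so is the subfamily $\{1\} \cup \{\omega_l : l \in S\}$. If $S \ne \emptyset$, then Proposition \ref{lem:baker} forces $\prod_{l \in S} \alpha_l^{\omega_l}$ to be transcendental, which contradicts its equality with the rational number $\alpha_1^{-1}$. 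Hence $S = \emptyset$, i.e., $\alpha_l = 1$ for every $l \ge 2$, and the displayed identity then forces $\alpha_1 = 1$ as well.

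Finally, $\alpha_l = \prod_n p_n^{q_{l,n}} = 1$ combined with unique factorization of positive rationals into prime powers yields $q_{l,n} = 0$ for all $l$ and $n$, contradicting the non-triviality of the original relation. The only delicate step is the pruning stage: Baker's theorem demands $\gamma_l \notin \{0,1\}$, so one must excise the trivial factors \emph{before} invoking Proposition \ref{lem:baker} and then verify that the surviving $\omega_l$'s together with $1$ remain $\Q$-linearly independent, which is automatic for any subfamily of a $\Q$-linearly independent family.
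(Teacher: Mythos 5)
Your proof is correct: the reduction to a relation $\prod_{l\ge 2}\alpha_l^{\omega_l}\in\Q^\times$ with $\alpha_l$ positive rationals, the pruning of the factors with $\alpha_l=1$ before invoking Baker's theorem, and the final appeal to unique factorization are all sound, and the subfamily independence needed for Proposition \ref{lem:baker} is indeed automatic. The paper itself states this proposition without proof (citing \cite[Proposition 2.2]{Nakamurasr1}), but your argument is exactly the Baker-type reasoning the paper deploys in the proofs of Theorem \ref{th:idm1} and Lemma \ref{lem:m2}, so it matches the intended approach.
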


\begin{proof}[Proof of Lemma \ref{lem:last}]
Obviously, we have
$$
Z_E (\vsig) - Z_E (\vsig + {\rm{i}} \Vec \tau) = Z_E (\vsig) 
\bigl( 1 - \exp \bigl(\log Z_E (\vsig) - \log Z_E (\vsig + {\rm{i}} \Vec \tau) \bigr) \bigr).
$$
Hence we only have to prove that there exits $\Vec \tau \in {\mathbb{R}}^d$ such that for any $\varepsilon>0$, $|\log Z_E (\vsig) - \log Z_E (\vsig + {\rm{i}} \Vec \tau) | < \varepsilon$. 

Now we suppose that a pair of vectors $\vc_{1}, \ldots , \vc_\varphi$ satisfies (A2). By Propositions \ref{pro:kroap1} and \ref{pro:abaker}, for any $\delta>0$, there exists $\Vec \tau_0\in\rd$ such that
$$
|p^{-{\rm{i}} \langle \vc_l, \Vec \tau_0 \rangle} -1| < \delta, \qquad 2 < p \le M, \quad 1 \le l \le \varphi.
$$
Obviously, we have $|x^m-1| = |x-1| |x^{m-1}+\cdots+ 1| \le m |x-1|$, for any $m \in {\mathbb{N}}$ and $|x| \le 1$. Thus it holds that
$$
|p^{-r {\rm{i}} \langle \vc_l, \Vec \tau_0 \rangle} -1| < m \delta, \qquad 2 < p ,r \le m, \quad 1 \le l \le \varphi
$$
when $\Vec \tau_0\in\rd$ satisfies $|p^{-{\rm{i}} \omega_l \tau_0} -1| < \delta$. 
In the view of (\ref{eq:9.19}), one has
$$
\log Z_E(\vs) = \sum_p \sum_{r=1}^{\infty} \sum_{l=1}^\varphi \sum_{k=1}^\eta \frac{1}{r} 
\alpha_{lk}(p)^r p^{-r\langle\vc_l,\vs \rangle} .
$$
By using the assumption $|\alpha_{lk}(p)| \le 1$ and $\min_{1\le l\le \varphi} \Re \langle \vc_l,\vs\rangle >1$ and the absolute convergence of the series $\sum_p \sum_{r=1}^{\infty} \sum_{l=1}^\varphi \sum_{k=1}^\eta r^{-1} \alpha_{lk}(p)^r p^{-r\langle\vc_l,\vs \rangle}$, we can find a natural number $M$ such that
$$
\sum_{p,r>M} \!\!{}' \sum_{l=1}^\varphi \sum_{k=1}^\eta \frac{1}{r} 
|\alpha_{lk}(p)|^r p^{-r\langle\vc_l,\vsig \rangle} \le \sum_{p,r>M} \!\!{}' \sum_{l=1}^\varphi 
\frac{\eta}{r} p^{-r\langle\vc_l,\vsig \rangle} < \varepsilon,
$$
where $\sum_{p,r>M}'$ be a sum taken over $p>M$ or $r>M$. Hence we have
\begin{equation*}
\begin{split}
&\bigl|\log Z_E(\vsig)-\log Z_E(\vsig+\Vec \tau_0) \bigr| \le 
\sum_p \sum_{r=1}^{\infty} \sum_{l=1}^\varphi \sum_{k=1}^\eta \frac{1}{r} |\alpha_{lk}(p)|^r 
p^{-r\langle\vc_l,\vsig \rangle} \bigl| p^{-r {\rm{i}} \langle \vc_l, \Vec \tau_0 \rangle} -1 \bigr|\\
\le & \sum_p^M \sum_{r=1}^M \sum_{l=1}^\varphi \frac{\eta}{r} p^{-r\langle\vc_l,\vsig \rangle} 
\bigl| p^{-r {\rm{i}} \langle \vc_l, \Vec \tau_0 \rangle} -1 \bigr| + \sum_{p,r>M} \!\!{}' \sum_{l=1}^\varphi 
\frac{\eta}{r} p^{-r\langle\vc_l,\vsig \rangle} 
\bigl| p^{-r {\rm{i}} \langle \vc_l, \Vec \tau_0 \rangle} -1 \bigr|\\
< &\, \eta M^3 \delta + 2 \varepsilon.
\end{split}
\end{equation*}
Therefore, we have $|\log Z_E (\vsig) - \log Z_E (\vsig + {\rm{i}} \Vec \tau_0) | < 3\varepsilon$ by taking $\delta:= \eta^{-1} M^{-3} \varepsilon$.

Next suppose that a pair of vectors $\vc_{1}, \ldots , \vc_\varphi$ satisfies the condition (A1). Then there exits $\vt_0\in\rd$ such that $(\langle \vc_1, \vt_0 \rangle, \ldots , \langle \vc_\varphi, \vt_0 \rangle) =(\omega_1, \omega_2, \ldots ,\omega_\varphi)$, where $\omega_1,\omega_2, \ldots ,\omega_\varphi$ with $\omega_1=1$ be algebraic real numbers which are linearly independent over the rationals. Thus we can show the case (A1) by modifying the proof of the case (A2). 
\end{proof}

\subsection*{Acknowledgments}
The author was partially supported by JSPS grant 16K05077 and Japan-France Research Cooperative Program (JSPS and CNRS). 

 

\begin{thebibliography}{1}
\bibitem{AN12q}
{\rm T.~Aoyama \and T.~Nakamura}, `Behaviors of multivariable finite Euler products in probabilistic view', {\it{Math.~Nachr.}} {\bf{286}} (2013), no.~17-18, 1691--1700. {http://arxiv.org/abs/1204.4043}.

\bibitem{AN12s}
{\rm T.~Aoyama \and T.~Nakamura}, `Multidimensional Shintani zeta functions and zeta distributions on $\rd$', {\it{Tokyo Journal Mathematics}} {\bf{36}} (2013), no.~2, 521--538. {http://arxiv.org/abs/1204.4042}.

\bibitem{ANPE}
{\rm T.~Aoyama \and T.~Nakamura}, `Multidimensional polynomial Euler products and infinitely divisible distributions on $\rd$', submitted (2012) {http://arxiv.org/abs/1204.4041}.

\bibitem{AY}
{\rm T.~Aoyama \and K.~Yoshikawa}, `Representations of discrete distributions on $\rd$ by multiple zeta functions', {\it{preprint}}.

\bibitem{Apo}
{\rm T.~M.~Apostol}, 
{\em Introduction to Analytic Number Theory} (Undergraduate Texts in Mathematics, Springer, 1976). 

\bibitem{Apo2}
{\rm T.~M.~Apostol}, {\em Modular functions and Dirichlet series in Number Theory } (Graduate Texts in Mathematics 41, Springer, 1990). 

\bibitem{BagchiZ}
B.~Bagchi, `A joint universality theorem for Dirichlet $L$-functions', {\it{Math.~Z.}} {\bf{181}} (1982), no.~3, 319--334.

\bibitem{Baker}
{\rm A.~Baker}, {\em Transcendental number theory } (Cambridge Mathematical Library, Cambridge University Press, Cambridge, 1975). 

\bibitem{Bohr}
H.~Bohr, `Uber eine quasi-periodische Eigenschaft Dirichletscher Reihen mit Anwendung auf die Dirichletschen $L$-Funktionen', (German), {\it{Math.~Ann.}} {\bf{85}} (1922), no.~1, 115--122. 

\bibitem{Cohen} 
{\rm H.~Cohen}, {\em Number theory. Vol.~II. Analytic and modern tools} (Graduate Texts in Mathematics, 240. Springer, New York, 2007). 

\bibitem{Denjoy}
A.~Denjoy, `L'Hypoth\'{e}se de Riemann sur la distribution des z\'{e}ros de $\zeta (s)$, reli\'{e}e
\`{a} la th\'{e}orie des probabilit\'{e}s', {\it{Comptes Rendus Acad.~Sci.~Paris}} {\bf{192}} (1931), 656--658.

\bibitem{Garu} R.~Garunk\v{s}tis, `Self-approximation of Dirichlet $L$-functions', {\it{J.~Number Theory}} \textbf{131} (2011), no.~7, 1286--1295.

\bibitem{GiSt} 
E.~Girondo and J.~Steuding, `Effective estimates for the distribution of values of Euler products', {\it{Monatsh.~Math.}} {\bf{145}} (2005), no. 2, 97--106. 

\bibitem{GK68}
{\rm B.~V.~Gnedenko \and A.~N.~Kolmogorov}, {\em Limit Distributions for Sums of Independent Random Variables (Translated from the Russian by Kai Lai Chung)}, (Addison-Wesley, 1968).

\bibitem{Helson}
H.~Helson, `Compact groups and Dirichlet series', {\it{Arkiv f\"or Matematik}} {\bf{8}} (1969), 139--143.

\bibitem{KaVo}
A.~A.~Karatsuba and S.~M.~Voronin {\it{The Riemann zeta-function}}. Translated from the Russian by Neal Koblitz. de Gruyter Expositions in Mathematics, 5. Walter de Gruyter $\&$ Co., Berlin, 1992. 

\bibitem{Khi}
{\rm A.~Ya.~Khinchine}, {\em Limit Theorems for Sums of Independent Random Variables (in Russian)}, (Moscow and Leningrad, 1938).

\bibitem{Lau} A.~Laurin\v cikas, {\it{Limit theorems for the Riemann zeta-function}}, Mathematics and its Applications, 352. Kluwer Academic Publishers Group, Dordrecht, 1996.

\bibitem{Lin}
{\rm G.~D.~Lin \and C.-~Y.~Hu}, `The Riemann zeta distribution', {\em Bernoulli} {\bf{7}} (2001) 817--828.

\bibitem{LSm}
{\rm A.~Lindner \and K.~Sato}, `Properties of stationary distributions of a sequence of generalized Ornstein-Uhlenbeck processes', {\em Math.~Nachr.} {\bf{284}} (2011) 2225--2248.

\bibitem{LO}
{\rm J.~V.~Linnik \and I.~V.~Ostrovskii}, `Decompositions of Random Variables and Vectors', (American Mathematical Society, Providence, RI, 1977 (Translation from the Russian original of 1972)).

\bibitem{Mas} K.~Matsumoto, `Probabilistic value-distribution theory of zeta-functions', {\it{Sugaku Expositions}} {\bf{17}} (2004), no.~1, 51--71. 

\bibitem{Nakamurasr1} 
{\rm T.~Nakamura}, `The joint universality and the generalized strong recurrence for Dirichlet $L$-functions', {\em Acta Arith.} {\bf{138}} no.~4 (2009) 357--362. 

\bibitem{Nakamura3} T.~Nakamura, `The generalized strong recurrence for non-zero rational parameters', {\it{Archiv der Mathematik}} {\bf{95}} (2010), 549--555.

\bibitem{Nakamurak} T.~Nakamura, `The generalized strong recurrence and the Riemann Hypothesis', Functions in Number Theory and Their Probabilistic Aspects, {\it{RIMS K\^oky\^uroku Bessatsu}}, {\bf{B34}}, (2012), 265--276.

\bibitem{Nakamura12} 
{\rm T.~Nakamura}, `A quasi-infinitely divisible characteristic function and its exponentiation', {\it{Statist.~Probab.~Lett.}} {\bf{83}} (2013), no.~10, 2256--2259. 

\bibitem{NaPaCorre}
T.~Nakamura and \L.~Pa\'{n}kowski, `Erratum to: The generalized strong recurrence for non-zero rational parameters', {\it{Arch.~Math. (Basel)}} {\bf{99}} (2012), no.~1, 43--47. 

\bibitem{NaPaAu}
T.~Nakamura and \L.~Pa\'{n}kowski, `Self-approximation for the Riemann zeta function', {\it{Bulletin of the Australian Mathematical Society}} {\bf{87}} (2013), 452--461. 

\bibitem{PankowskiRec} \L.~Pa\'{n}kowski, `Some remarks on the generalized strong recurrence for L-functions', New directions in value-distribution theory of zeta and $L$-functions, 305--315, Ber.~Math., Shaker Verlag, Aachen, 2009.

\bibitem{PanN16} \L.~Pa\'{n}kowski, `Joint universality and generalized strong recurrence with rational parameter', {\it{J.~Number Theory}} {\bf{163}} (2016), 61--74. 

\bibitem{Rudin} W.~Rudin, {\it{Real and complex analysis}}. Third edition. McGraw-Hill Book Co., New York, 1987. 

\bibitem{S99} 
{\rm K.~Sato}, {\em L\'evy Processes and Infinitely Divisible Distributions}, Cambridge Studies in Advanced Mathematics, {\bf{68}} (Cambridge University Press, Cambridge, 1999). 


\bibitem{Sato12} 
{\rm K.~Sato}, `Stochastic integrals with respect to Levy processes and infinitely divisible distributions', {\it{Sugaku Expositions}} {\bf{27}} (2014), no.~1, 19--42. http://ksato.jp/.

\bibitem{Steuding1} 
{\rm J.~Steuding}, {\em Value-Distribution of L-functions}, Lecture Notes in Mathematics, 1877, Springer, Berlin, 2007.

\bibitem{Tit}
{\rm E.~C.~Titchmarsh}, {\em The theory of the Riemann zeta-function. Second edition. Edited and with a preface by D.~R.~Heath-Brown} (The Clarendon Press, Oxford University Press, New York, 1986). 

\end{thebibliography}
\end{document}